\numberwithin{equation}{section}
\newtheorem{lemma}{Lemma}[section]
\newtheorem{coro}[lemma]{Corollary}
\newtheorem{definition}[lemma]{Definition}
\newtheorem{teo}[lemma]{Theorem}
\newtheorem{proposition}[lemma]{Proposition}
\newtheorem{corollary}[lemma]{Corollary}
\newtheorem{remark}[lemma]{Remark}
\newcommand{\R}{{\mathbb R}}
\newcommand{\be}{\begin{equation}}
\newcommand{\ben}{\begin{equation*}}
\newcommand{\ee}{\end{equation}}
\newcommand{\een}{\end{equation*}}
\newcommand{\BL}{\begin{lemma}}
\newcommand{\EL}{\end{lemma}}
\newcommand{\BT}{\begin{theorem}}
\newcommand{\ET}{\end{theorem}}
\newcommand{\BP}{\begin{proposition}}
\newcommand{\EP}{\end{proposition}}
\newcommand{\BC}{\begin{corollary}}
\newcommand{\EC}{\end{corollary}}
\def\bs{\begin{split}}
\def\es{\end{split}}
\DeclareMathOperator{\dist}{dist}
\begin{document}

\title[A gluing approach for the fractional Yamabe problem with isolated singularities]
{A gluing approach for the fractional Yamabe problem with isolated singularities}

\author[W. Ao]{Weiwei Ao}

\address{Weiwei Ao
\hfill\break\indent
Wuhan University
\hfill\break\indent
Department of Mathematics and Statistics, Wuhan, 430072, PR China}
\email{wwao@whu.edu.cn}

\author[A. DelaTorre]{Azahara DelaTorre}

\address{Azahara DelaTorre
\hfill\break\indent
Universitat Polit\`ecnica de Catalunya,
\hfill\break\indent
ETSEIB-MA1, Av. Diagonal 647, 08028 Barcelona, Spain}
\email{azahara.de.la.torre@upc.edu}

\author[M.d.M. Gonz\'alez]{Mar\'ia del Mar Gonz\'alez}

\address{Mar\'ia del Mar Gonz\'alez
\hfill\break\indent
Universidad Aut\'onoma de Madrid
\hfill\break\indent
Departamento de Matem\'aticas, Campus de Cantoblanco, 28049 Madrid, Spain}
\email{mariamar.gonzalezn@uam.es}

\author[J. Wei]{Juncheng Wei}
\address{Juncheng. Wei
\hfill\break\indent
University of British Columbia
\hfill\break\indent
 Department of Mathematics, Vancouver, BC V6T1Z2, Canada} \email{jcwei@math.ubc.ca}

\thanks{A. DelaTorre is Supported by MINECO grants MTM2014-52402-C3-1-P and MTM2017-85757-P, and  the FPI-2012 fellowship, and is part of the Catalan research group 2014SGR1083. M.d.M. Gonz\'alez is supported by MINECO grants  MTM2014-52402-C3-1-P and MTM2017-85757-P, the Fundaci\'on BBVA grant for Investigadores y Creadores Culturales 2016, and is part of the Barcelona Graduate School of Math and the Catalan research group 2014SGR1083. She also would like to acknowledge the NSF grant DMS-1440140 while she was in residence at the Mathematical Sciences Research Institute in Berkeley, CA, during Spring 2016.  J. Wei is partially supported by NSERC of Canada.}

\begin{abstract}

We construct solutions for the fractional Yamabe problem that are singular at a prescribed number of isolated points. This seems to be the first time that a gluing method is successfully applied to a non-local problem in order to construct singular solutions. There are two main steps in the proof: to construct an approximate solution by gluing half bubble towers at each singular point, and then an infinite-dimensional Lyapunov-Schmidt reduction method, that reduces the problem to an (infinite dimensional) Toda type system. The main technical part is the estimate of the interactions between different bubbles in the bubble towers.
\end{abstract}

\date{}\maketitle


\centerline{AMS subject classification:  35J61, 35R11, 53A30}

\section{Introduction}\label{sec1}

In this paper, we consider the problem of finding solutions for the fractional Yamabe problem in $\mathbb R^n$, $n> 2\gamma$ for $\gamma\in(0,1)$ with isolated singularities at a prescribed finite number of points. This is, to find positive solutions for the equation
\begin{equation}\label{eq101}
\left\{\begin{split}
&(-\Delta_{\mathbb R^n})^\gamma u=c_{n,\gamma}u^{\beta} \,\,\mbox{ in }\mathbb R^n\backslash \Sigma,\\
&u\to +\infty\,\, \mbox{ as }x\to \Sigma,
\end{split}
\right.
\end{equation}
where $\Sigma=\{p_1,\cdots,p_k\}$ for $k\geq 2$, and
 $$\beta=\frac{n+2\gamma}{n-2\gamma}$$
is the critical exponent in dimension $n$.
 Remark that we are using the notation $t^\beta$ to denote the power nonlinearity $|t|^{\beta-1}t$, but this does not constitute any abuse of notation since  any solution must be positive thanks to the maximum principle. In addition, $c_{n,\gamma}>0$ is a normalization constant and can be chosen arbitrarily.

 Problem \eqref{eq101} can be formulated in geometric terms: given the Euclidean metric $|dx|^2$ on $\mathbb R^n$, we are looking for a conformal metric \begin{equation*}g_u=u^{\frac{4}{n-2\gamma}}|dx|^2,\ u>0,\end{equation*} with positive constant fractional curvature $Q^{g_u}_\gamma\equiv c_{n,\gamma}>0$. This is known as the fractional Yamabe problem (in the positive curvature case), and smooth solutions have been considered in \cite{MarChang,CaseChang,MarQing,Gonzalez-Wang,kmw1,Fang-Gonzalez,kmw} for instance. We remark that the nonlinearity in the right hand side of the equation is critical for the Sobolev embedding, a common feature of  Yamabe-type problems.

Instead, one could look at the singular version of the problem, in which the metric blows up at a prescribed set $\Sigma\subset\mathbb R^n$. Here the sign of $Q_\gamma$ is related to the size of the singular set  $\Sigma$. For instance, when $\Sigma$ is a smooth submanifold, \cite{Gonzalez-Mazzeo-Sire} shows that the positivity of fractional curvature imposes some geometric and topological restrictions, while \cite{Zhang} considers very general singular sets in the case $\gamma\in(1,2)$, with the additional assumption of positive scalar curvature. See also \cite{Jin-Queiroz-Sire-Xiong} for some capacitary arguments on the local behavior of singularities.

But all these results give necessary conditions for the existence of such metrics.
On the contrary, the question of sufficiency is expected to have only partial answers, requiring that $\Sigma$ has a very particular structure. Here we initiate the study of this program, looking at the singular Yamabe problem  with prescribed isolated singularities at the points $\{p_1,\ldots,p_k\}$, $k\geq 2$.

Thus our main theorem is:

\begin{teo}\label{main-theorem}
Fixed any configuration $\Sigma=\{p_1,\cdots,p_k\}$ of $k$ different points in $\mathbb R^n$, $k\geq 2$,
there exists a  smooth, positive solution to \eqref{eq101}.
\end{teo}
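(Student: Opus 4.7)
The proof is by infinite-dimensional gluing. The plan is to build an approximate solution as a sum of $k$ half-infinite towers of fractional bubbles, one tower centred at each $p_i$, interpolating with the singular Delaunay-type solution along each ray, and then to solve the true equation by a Lyapunov--Schmidt reduction over the (infinitely many) scaling parameters, ending up with a Toda-type coupled system that one solves by a fixed point.

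\textbf{Step 1: local model and building blocks.} Around each $p_i$ pass to Emden--Fowler variables $t=-\log|x-p_i|$, $\theta\in\mathbb{S}^{n-1}$. In these coordinates the purely singular radial problem for $(-\Delta)^\gamma u=c_{n,\gamma}u^\beta$ admits a known one-parameter family of positive periodic \emph{fractional Delaunay} solutions $v_L(t)$ whose period $L$ plays the role of a neck parameter. As $L\to\infty$ each $v_L$ degenerates into a chain of rescaled Aubin--Talenti bubbles $U_{\lambda,\xi}$ (the standard solitons of $(-\Delta)^\gamma U=c_{n,\gamma}U^\beta$), i.e.\ a \emph{half bubble tower} accumulating at the singular point. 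I would therefore use, as unperturbed building block at $p_i$, such a tower indexed by scales $\lambda_{i,j}$, $j\in\mathbb{Z}$, and centres $\xi_{i,j}$ close to $p_i$, with ratios $\lambda_{i,j+1}/\lambda_{i,j}$ of Delaunay size.

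\textbf{Step 2: approximate solution and error estimate.} Set $\bar u=\sum_{i=1}^k W_i$, where each $W_i$ is the half tower at $p_i$, matched at the tip with a fixed exact Delaunay profile so that the construction is consistent near $p_i$. Plugging $\bar u$ into \eqref{eq101} produces an error with three components: self-interactions within a single tower (controlled by the Delaunay ansatz), cross-interactions between towers (small because the $p_i$ are at positive distance), and a matching error at the transition scale. The crucial technical point is that, because $(-\Delta)^\gamma$ is non-local, every bubble is felt by every other bubble through its $|x|^{-(n-2\gamma)}$ tail, and these tails must be summed carefully; the outcome should be a decay in terms of the ratios $\lambda_{i,j+1}/\lambda_{i,j}$ and $\mathrm{dist}(p_i,p_\ell)$ that is small enough to close the reduction.

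\textbf{Step 3: infinite-dimensional Lyapunov--Schmidt and the Toda system.} Linearising at $\bar u$, each bubble contributes an $(n+1)$-dimensional approximate kernel (one dilation + $n$ translations), so the full kernel is countably infinite. I would solve the auxiliary equation in a weighted Banach space adapted to the tower geometry, inverting the linearisation orthogonally to this kernel; the inverse is constructed bubble-by-bubble, using invertibility of the limiting linearisation at a single bubble plus Fredholm/periodicity arguments inherited from the Delaunay analysis. Projecting what is left onto the kernel yields one scalar equation per bubble, giving an infinite Toda-type system in the unknowns $\{\lambda_{i,j},\xi_{i,j}\}$, where neighbouring bubbles in a tower interact to leading order while different towers interact only through a lower order term. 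Solving this system by a contraction argument, centred on the exact Delaunay parameters, yields admissible configurations and hence an actual solution of \eqref{eq101}.

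\textbf{Main obstacle.} The key difficulty I foresee is Step 2, specifically the sharp interaction estimate between bubbles at vastly different scales under the non-local operator $(-\Delta)^\gamma$. Unlike the local Mazzeo--Pacard gluing, where bubble tails are exponentially (in $t$) decoupled after conformal change, here the polynomial tails communicate across the whole tower and with every other tower, and obtaining the correct order of these interactions is what ultimately makes the reduced Toda system solvable; this is the part of the argument that genuinely requires new ideas beyond the classical gluing scheme.
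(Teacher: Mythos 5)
Your proposal matches the paper's strategy essentially step for step: half bubble towers (half-Dancer solutions converging to half-Delaunay) at each $p_i$, an infinite-dimensional Lyapunov--Schmidt reduction with the $(n+1)$ approximate kernel elements per bubble, reduction to an infinite Toda-type system for the scale and translation perturbations solved by a fixed point in exponentially weighted sequence spaces, and correctly identifying the non-local tail interactions as the key technical obstacle. The one nuance you compress is that the paper splits the reduced equations into a finite set of balancing conditions at $j=0$ (fixing the Delaunay parameters $q_i$, $R^i$ and first-bubble displacements $\hat a_0^i$, analogous to Mazzeo--Pacard's local case) plus the genuinely infinite Toda system for $j\geq 1$, but this is a refinement of, not a departure from, your scheme.
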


As a corollary, we also obtain existence of conformal metrics on the unit sphere $\mathbb S^n$ of constant fractional curvature with a finite number of isolated singularities. Note that our results will imply that this metric is complete.

In the case of an isolated singularity, it is shown in  \cite{CaffarelliJinSireXiong} that non-removable singularities for the problem
\begin{equation}\label{equation0}\left\{\begin{split}
&(-\Delta_{\mathbb R^n})^\gamma u=u^{\beta} \text{ in } \mathbb R^n\backslash\{0\},
\\&u\to +\infty \,\text{ as }\,x\to 0, \quad u>0,
\end{split}\right.\end{equation}
 must have the asymptotic behavior
 \begin{equation*}\label{asymptotics}
c_1r^{-\tfrac{n-2\gamma}{2}}\leq u(x)\leq c_2r^{-\tfrac{n-2\gamma}{2}},\quad\text{as}\quad r\to 0,
\end{equation*}
where $c_1$, $c_2$ are positive constants and $r=|x|$.

The geometric interpretation of \eqref{equation0} was considered in \cite{paper1}. Indeed, it corresponds to  the fractional Yamabe problem in a cylinder, which motivates the substitution \eqref{wv} below. In the paper \cite{dpgw} the authors show, using a variational approach, the existence of \emph{Delaunay} type solutions for \eqref{equation0}, i.e, solutions  of the form
\begin{equation}\label{wv}
u_L(r)=r^{-\frac{n-2\gamma}{2}}v_L(-\log r)\text{ on } \mathbb R^n\setminus\{0\},
\end{equation}
 for some smooth function $v_L$ that is periodic in the variable $t=-\log r$, for any period $L\geq L_0$. $L_0$ is known as the minimal period and has been completely characterized.
 
  This type of solutions for local problems have been known for a long time. For instance, for constant mean curvature surfaces, this construction is an old one and goes back to \cite{Delaunay}, while for the scalar curvature case $\gamma=1$, which corresponds to the classical Yamabe problem, a good reference is \cite{Schoen:notas}.

In addition, Delaunay solutions are useful in gluing problems, since they model isolated singularities: we cite, for instance, \cite{Mazzeo-Pacard:Delaunay-ends,Mazzeo-Pacard-Pollack,Schoen:isolated-singularities} for the construction of constant mean curvature surfaces with Delaunay ends, or \cite{mp, Mazzeo-Pollack-Uhlenbeck} for solutions to \eqref{eq101} in the local case $\gamma=1$. However, these classical constructions exploit the local nature of the problem and, above all, the fact that \eqref{equation0} reduces to a standard second ODE in the radial case. There the space of solutions of this ODE can be explicitly written in terms of two given parameters using phase-plane analysis,  which is not the case for a non-local equation.

Here we are able to use the gluing method for the non-local problem \eqref{eq101}.  The first difficulty is obvious: one needs to make sure that the errors created by the localization procedure are not propagated by the non-locality of the problem but, instead, they can be handled through careful estimates.

Nevertheless, the main obstacle we find is the lack of standard ODE methods, which are not valid any longer for a non-local problem. For instance, as we have mentioned above, the starting point in the classical case (\cite{mp,Mazzeo-Pacard:Delaunay-ends}) is the consideration of Delaunay metrics in order to construct an approximate solution to the original problem.  In our case, even though a Delaunay solution is our basic model, we construct a bubble tower at each singular point. These are known as {\em half-Dancer} solutions, and they  converge to a half-Delaunay.
In addition, half-Dancer solutions decay fast at infinity, thus they are appropriate in a gluing construction for a non-local problem.

The advantage of this approach is that one is able to perturb each bubble in the
tower separately.  Gluing a bubble tower at each singular point allows to construct a suitable approximate solution for \eqref{eq101} with an infinite number of parameters to be chosen. Note that the linearization at this approximate solution is not injective due to the presence of an  infinite dimensional kernel, so we use an infinite dimensional Lyapunov-Schimdt reduction procedure.  This approach, in some spirit,
is in close connection with Kapouleas' CMC construction \cite{Kapouleas}, and later adapted
by Malchiodi in \cite{m}, where he constructs new entire solutions for a semilinear equation with subcritical exponent, different from the spike solutions that were known for a long time. Malchiodi's new solutions do not tend to zero at infinity, but decay to zero away from three half lines; his method is to construct a half-Dancer solution along each half-line.

As a consequence, in order to solve the original problem from the perturbed one, we need to solve an infinite dimensional system of Toda type, which comes from studying the interactions between the different bubbles in the tower. While the strongest interactions lead to some compatibility conditions,  the remaining interactions can be made small, and are handled through a fixed point argument.

These compatibility conditions do not impose any restrictions on the location of the singularity points $p_1,\ldots,p_k$, but only on the Delaunay parameter (the neck size) at each point. We also remark that the first compatibility condition is analogous to that of the local case $\gamma=1$ of \cite{mp}, this is due to the strong influence of the underlying geometry, while the rest of the configuration depends on the Toda type system. On the other hand, in the local setting a similar procedure to remove the resonances of the linearized problem was considered in \cite{amw} and the references therein. However, in their case the Toda type system is finite dimensional.

\medskip

More precisely,  we use the gluing method and Lypunov-Schmidt reduction method in this paper. First, we find a good approximate solution $\bar{u}$, defined in \eqref{eq301}, which roughly speaking is a perturbation of summation of $k$ half-Delaunay solutions with singularity at $p_i$ for $i=1, \cdots,k$; and then, use the reduction method to find a solution $u=\bar{u}+\phi$ which satisfies the following:
\begin{equation*}
(-\Delta)^\gamma u-u^{\beta}=\sum_{i=1}^k\sum_{j=0}^\infty\sum_{l=0}^n c_{jl}^i (w_j^i)^{\frac{4\gamma}{n-2\gamma}}Z_{jl}^i
\end{equation*}
where the right hand side is some Lagrangian multiplier which contains the approximate kernels of the linearized operator.

The last step is to determine the infinitely dimensional free parameter set such that all $c_{jl}^i$ vanish.  It turns out that this is reduced to some infinite dimensional Toda system around each singular point $p_i$ (which determines the perturbation parameters for the bubbles), and some balancing conditions  (equations \eqref{eq605} and \eqref{eq606}), which determine the necksize parameters for the Delaunay solutions). A key property in the proof is to have a sufficiently good approximate solution (a half-Dancer) so that all the estimates are exponentially decreasing in terms of the index $j$ for the bubbles. The problem of adjusting the parameters to have all $c_{jl}^i$ equal to $0$ is then solved by a fixed point argument in suitable weighted spaces.

\medskip

We remark here that in all our results we do not use the well known extension problem for the fractional Laplacian \cite{Caffarelli-Silvestre}. Instead we are inspired to the previous paper \cite{dpgw} to rewrite the fractional Laplacian in radial coordinates in terms of a new integro-differential operator in the variable $t$. In any case, if we write our problem in the extension, at least for the linear theory, it provides an example of an edge boundary value problem of the type considered in \cite{Mazzeo:edge,Mazzeo:edge2}.

When the singular set $\Sigma$ is a smooth submanifold of dimension $N>0$, problem  \eqref{eq101} has been considered in \cite{ACDFGW}. In this setting, in order to have a solution one needs to impose some necessary conditions  on $N$ (see \cite{Gonzalez-Mazzeo-Sire,Zhang}). The existence of weak solutions with larger Hausdorff dimension singular set has been studied separately \cite{acw}.\\

The paper will be structured as follows: in Section \ref{sec2} we recall some results about Delaunay solutions for \eqref{equation0} from \cite{dpgw}, while in Section \ref{sec3} we use those as models to construct a suitable approximate solution for our problem. Sections \ref{sec4} and \ref{sec5} are of technical nature; here we calculate the interactions between different bubbles. Finally, the proof of Theorem \ref{main-theorem} is contained in Section \ref{sec6}.

\section{Delaunay-type solutions}\label{sec2}

In this section we recall some recent results in \cite{paper1,dpgw} on the Delaunay solutions of
\begin{equation}\label{eq201}
(-\Delta_{\mathbb R^n})^\gamma u=c_{n,\gamma}u^{\beta} \mbox{ in }\R^n \backslash \{0\}.
\end{equation}
We may reduce \eqref{eq201} by writing
\begin{equation*}
u(x)=r^{-\frac{n-2\gamma}{2}}v(-\log |x|)
\end{equation*}
and using $t=-\log |x|$.

There are two distinguished solutions to \eqref{eq201}:
\begin{itemize}
\item[\emph{i.}] The cylinder, which is $v(t)\equiv C$, that corresponds to the singular solution $u(x)=Cr^{-\frac{n-2\gamma}{2}}$.
\item[\emph{ii.}] The standard sphere (also known as ``bubble")
\begin{equation*}\label{bubble}v(t)=(\cosh(t-t_0))^{-\frac{n-2\gamma}{2}},\end{equation*} for any $t_0\in\mathbb R$, which is regular at the origin.
\end{itemize}
Moreover, it is well-known that all the smooth solutions to problem \eqref{eq101} are of the form
\begin{equation*}
w(x)=\Big(\frac{\lambda}{\lambda^2+|x-x_0|^2}\Big)^{\frac{n-2\gamma}{2}}.
\end{equation*}
For the standard bubble solution we have the following non-degeneracy result (Theorem 1 in \cite{dps}):
\begin{lemma}\label{lemma:non-degenerate}
The solution $w(x)=(\frac{1}{1+|x|^2})^{\frac{n-2\gamma}{2}}$ of \eqref{eq101} is non-degenerate in the sense that all bounded solutions of equation
\begin{equation*}
(-\Delta)^\gamma \psi-c_{n,\gamma}\beta w^{\beta-1}\psi=0 \mbox{ in }\R^n
\end{equation*}
are linear combinations of the functions
\begin{equation*}
\frac{n-2\gamma}{2}w+x\cdot \nabla w, \ \mbox{ and } \ \partial_{x_i}w, \ 1\leq i\leq n.
\end{equation*}
\end{lemma}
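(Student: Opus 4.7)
My plan is to exploit the conformal covariance of the fractional Laplacian to transfer the problem to the round sphere $\mathbb{S}^n$, where the linearized operator becomes a spectral problem that is diagonal in spherical harmonics.

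\textbf{Step 1: Lift to the sphere.} Let $\mathcal{F}:\mathbb{S}^n\setminus\{N\}\to\mathbb{R}^n$ be stereographic projection. Under the conformal change $g_{\mathbb{S}^n}=U^{4/(n-2\gamma)}|dx|^2$ with $U(x)=\bigl(\tfrac{2}{1+|x|^2}\bigr)^{(n-2\gamma)/2}$, the conformal fractional Laplacian satisfies the intertwining identity
\begin{equation*}
P_\gamma^{\mathbb{S}^n}\bigl(U^{-1}(\phi\circ\mathcal{F}^{-1})\bigr) = U^{-\beta}\bigl((-\Delta)^\gamma \phi\bigr)\circ\mathcal{F}^{-1}.
\end{equation*}
Since $w$ is (up to a multiplicative constant) exactly $U$, setting $\tilde\psi:=U^{-1}(\psi\circ\mathcal{F}^{-1})$ transforms the linearized equation into $P_\gamma^{\mathbb{S}^n}\tilde\psi = c_{n,\gamma}\beta\,\tilde\psi$ on $\mathbb{S}^n\setminus\{N\}$.

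\textbf{Step 2: Spectral decomposition.} On $\mathbb{S}^n$, the operator $P_\gamma^{\mathbb{S}^n}$ acts on the spherical harmonic $Y_k$ of degree $k$ by the eigenvalue
\begin{equation*}
\lambda_k=\frac{\Gamma(\tfrac{n}{2}+\gamma+k)}{\Gamma(\tfrac{n}{2}-\gamma+k)}.
\end{equation*}
Writing $\tilde\psi=\sum_{k\geq 0}\tilde\psi_k$ in spherical harmonics, the eigenvalue equation becomes $\lambda_k\tilde\psi_k=c_{n,\gamma}\beta\,\tilde\psi_k$. A direct ratio computation gives $\lambda_1/\lambda_0=(\tfrac{n}{2}+\gamma)/(\tfrac{n}{2}-\gamma)=\beta$ and, since $c_{n,\gamma}=\lambda_0$ in this normalization, one checks that $\lambda_k=c_{n,\gamma}\beta$ holds \emph{only} for $k=1$. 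Monotonicity of $\lambda_k$ in $k$ is what makes this uniqueness work. Therefore $\tilde\psi$ must belong to the $(n+1)$-dimensional space spanned by the degree-one spherical harmonics, i.e.\ the restrictions $Y_1^{(j)}(\omega)=\omega_j$, $j=1,\dots,n+1$.

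\textbf{Step 3: Pull back and identify the explicit basis.} Pulling $Y_1^{(j)}$ back through $\mathcal{F}$ and multiplying by $U$, the $n$ horizontal coordinates $\omega_1,\dots,\omega_n$ yield, up to constants, the translation modes $\partial_{x_i}w$, while the vertical coordinate $\omega_{n+1}=\tfrac{|x|^2-1}{|x|^2+1}$ yields the dilation mode $\tfrac{n-2\gamma}{2}w+x\cdot\nabla w$. This produces exactly the $(n+1)$-parameter family in the statement.

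\textbf{Main obstacle.} The subtle point is that the original hypothesis is only $\psi\in L^\infty(\mathbb{R}^n)$, not decay or $L^2$-regularity, whereas the spectral decomposition naturally lives in $L^2(\mathbb{S}^n)$. After the conformal change, $\tilde\psi=U^{-1}(\psi\circ\mathcal{F}^{-1})$ is only guaranteed to blow up at worst like $U^{-1}$ near the north pole $N$, which is a controlled algebraic singularity. One must justify that such a $\tilde\psi$, solving an eigenvalue equation on $\mathbb{S}^n\setminus\{N\}$, admits a bona fide spherical harmonic expansion, i.e.\ that the singularity at $N$ is removable for the admissible eigenspaces. The cleanest way is to argue a priori regularity via the Caffarelli--Silvestre extension: lift $\psi$ to a harmonic function $\Psi$ on the upper half-space with weight $y^{1-2\gamma}$, perform the analogous conformal change to the hyperbolic ball model, and invoke standard elliptic regularity at the boundary point corresponding to $N$ together with the boundedness of $\psi$. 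This promotes $\tilde\psi$ to an honest element of a weighted Sobolev space on $\mathbb{S}^n$ where the spectral decomposition of $P_\gamma^{\mathbb{S}^n}$ is valid, at which point the argument in Steps~2--3 concludes the proof.
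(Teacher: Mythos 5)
The paper does not prove this lemma itself; it cites it as Theorem~1 of the reference \cite{dps} (D\'avila--del Pino--Sire). Your proposed argument is, in fact, precisely the strategy of that cited proof: conformally transplant the linearized equation to $\mathbb S^n$ via stereographic projection, diagonalize $P_\gamma^{\mathbb S^n}$ on spherical harmonics, observe that $\lambda_k$ is strictly increasing so that $\lambda_k=\beta\lambda_0$ forces $k=1$, and then pull the degree-one harmonics back to recover the translation and dilation modes. Your Steps~1--3 are correct, and the eigenvalue computation $\lambda_1/\lambda_0=(\tfrac n2+\gamma)/(\tfrac n2-\gamma)=\beta$ is exactly right.

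Where you are right to worry, and where your sketch is still short of a proof, is the ``main obstacle'' paragraph. If $\psi$ is merely bounded, then $\tilde\psi=U^{-1}\psi$ blows up near the north pole like $\dist(\cdot,N)^{-(n-2\gamma)}$, and such a function is in $L^2(\mathbb S^n)$ only when $2(n-2\gamma)<n$, i.e.\ $n<4\gamma$; for $\gamma\in(0,1)$ this excludes essentially every $n$ of interest. So the spherical-harmonic expansion simply does not apply to $\tilde\psi$ as it stands, and invoking ``standard elliptic regularity at the boundary point'' in the extension is not enough: the extension problem is degenerate (weight $y^{1-2\gamma}$) and, more to the point, regularity at an interior or boundary point does not by itself kill the algebraic growth coming from the $U^{-1}$ factor. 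What closes the gap in the cited proof is an a priori \emph{decay} estimate on $\mathbb R^n$: using the integral representation $\psi(x)=C\int_{\mathbb R^n}|x-y|^{-(n-2\gamma)}\,w(y)^{\beta-1}\psi(y)\,dy$ (the Riesz potential of the right-hand side, modulo a bounded $\gamma$-harmonic term handled by a Liouville argument), together with $w^{\beta-1}=O(|x|^{-4\gamma})$, one shows that any \emph{bounded} solution in fact satisfies $\psi(x)=O(|x|^{-(n-2\gamma)})$. This decay makes $\tilde\psi$ bounded across $N$, hence the singularity is genuinely removable and $\tilde\psi\in L^2(\mathbb S^n)$, after which your Steps~2--3 apply verbatim. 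Without this decay step the argument does not go through, so it is the essential ingredient to add rather than a technicality to be waved at.
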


Note that we normalize the constant $c_{n,\gamma}$ in \eqref{eq201} such that the standard bubble is a solution. The exact value of the constants may be found in \cite{paper1} but in this paper this is not important.\\

In \cite{dpgw}, the authors consider the existence of solutions $v(t)$ which are periodic in $t$. Using the change of variable $t=-\log |x|$, the equation \eqref{eq201} can be written as
\begin{equation}\label{eq202}
\mathcal{L}_\gamma v=c_{n,\gamma}v^\beta, \quad t\in \R, \ v>0,
\end{equation}
where
$\mathcal{L}_\gamma$ is the linear operator defined by
\begin{equation*}\label{eq203}
\mathcal{L}_\gamma v=\kappa_{n,\gamma}P.V. \int_{-\infty}^{+\infty}(v(t)-v(\tau))K(t-\tau)d\tau+c_{n,\gamma}v(t)
\end{equation*}
for $K$ a singular kernel given in (2.13) of \cite{dpgw} and
\begin{equation*}
\kappa_{n,\gamma}=\pi^{-\frac{n}{2}}2^{2\gamma}\frac{\Gamma(\frac{n}{2}+\gamma)}
{\Gamma(1-\gamma)}\gamma.
\end{equation*}
One has the following asymptotic behaviour  for $K$:
\begin{lemma}[Lemma 2.5 in \cite{dpgw}]\label{lemma201}
The asymptotic expansion of the kernel $K$ is given by
\begin{equation*}
K(\xi)\sim\left\{\begin{array}{l}
|\xi|^{-1-2\gamma},\quad |\xi|\to 0,\\
e^{-\frac{n+2\gamma}{2}|\xi|  }, \quad |\xi|\to \infty.
\end{array}
\right.
\end{equation*}
\end{lemma}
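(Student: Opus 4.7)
The kernel $K$ arises from conjugating the fractional Laplacian through the Emden-Fowler substitution $u(x) = r^{-(n-2\gamma)/2}v(-\log r)$, restricted to radial profiles. My plan is first to write $K$ explicitly as a spherical integral, then handle the two asymptotic regimes with elementary expansions plus a scaling.

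\textbf{Explicit form.} I would start from $(-\Delta)^\gamma u(x) = c_{n,\gamma}\,\mathrm{P.V.}\int_{\mathbb{R}^n}(u(x)-u(y))|x-y|^{-n-2\gamma}\,dy$. With $x = e^{-t}\theta_0$ and $y = e^{-\tau}\theta'$ for $\theta_0,\theta'\in S^{n-1}$, the key identity is
\begin{equation*}
|x-y|^2 \;=\; 2e^{-(t+\tau)}\bigl(\cosh(t-\tau)\,-\,\langle \theta_0, \theta'\rangle\bigr),
\end{equation*}
together with $dy = e^{-n\tau}\,d\tau\,dS(\theta')$ and $u(y) = e^{\tau(n-2\gamma)/2}v(\tau)$. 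Substituting and collecting the prefactor $e^{t(n+2\gamma)/2} = r^{-(n+2\gamma)/2}$ yields $(-\Delta)^\gamma u(x) = r^{-(n+2\gamma)/2}\mathcal{L}_\gamma v(t)$, with the kernel $K$ given, up to the normalization $\kappa_{n,\gamma}$, by
\begin{equation*}
K(\xi) \;=\; \int_{S^{n-1}} \frac{dS(\theta')}{\bigl(\cosh \xi - \langle \theta_0,\theta'\rangle\bigr)^{(n+2\gamma)/2}}.
\end{equation*}
By rotation invariance this is independent of the choice of base point $\theta_0$, and the zeroth-order term $c_{n,\gamma}v(t)$ of $\mathcal L_\gamma$ comes from the residual piece $[e^{\xi(n-2\gamma)/2}-1]v(t)$ that appears when splitting $u(x)-u(y) = e^{t(n-2\gamma)/2}[v(t)-v(\tau)] + [e^{t(n-2\gamma)/2}-e^{\tau(n-2\gamma)/2}]v(\tau)$ in the integrand.

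\textbf{Short-range asymptotic.} Parametrizing $\theta'$ by the colatitude $dS(\theta') = \omega_{n-2}\sin^{n-2}\vartheta\,d\vartheta$ with $\langle\theta_0,\theta'\rangle = \cos\vartheta$, the singularity of the integrand concentrates at $\vartheta = 0$. There, a Taylor expansion gives $\cosh\xi-\cos\vartheta = \tfrac12(\xi^2+\vartheta^2) + O(\xi^4+\vartheta^4)$. Rescaling $\vartheta = |\xi|s$ in the small-$\vartheta$ region yields
\begin{equation*}
K(\xi) \;\sim\; |\xi|^{-1-2\gamma}\cdot 2^{(n+2\gamma)/2}\,\omega_{n-2}\int_0^{\infty}\frac{s^{n-2}\,ds}{(1+s^2)^{(n+2\gamma)/2}}
\end{equation*}
as $|\xi| \to 0$, the last integral being finite and positive. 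The complementary region $\vartheta \gtrsim 1$ contributes a bounded remainder that is negligible against $|\xi|^{-1-2\gamma}$.

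\textbf{Long-range asymptotic.} For $|\xi|\to\infty$, the bound $|\langle\theta_0,\theta'\rangle|\leq 1$ together with $\cosh\xi = \tfrac12 e^{|\xi|}(1+O(e^{-2|\xi|}))$ allows us to factor out the dominant term:
\begin{equation*}
K(\xi) \;=\; (\cosh\xi)^{-(n+2\gamma)/2}\int_{S^{n-1}}\Bigl(1-\frac{\langle\theta_0,\theta'\rangle}{\cosh\xi}\Bigr)^{-(n+2\gamma)/2}dS(\theta') \;\sim\; 2^{(n+2\gamma)/2}\,\omega_{n-1}\,e^{-\frac{n+2\gamma}{2}|\xi|},
\end{equation*}
since the spherical integral converges to $\omega_{n-1}$ by dominated convergence. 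The only mildly delicate step is the bookkeeping in the Emden-Fowler substitution, where one must check that the weights in $(t+\tau)$ and $(t-\tau)$ collapse cleanly so that $\mathcal L_\gamma$ becomes a convolution-type operator in $\xi = t-\tau$; this is a standard calculation carried out in detail in \cite{dpgw}. Once $K$ is in the explicit spherical-integral form, the two asymptotic regimes follow directly from the expansions of $\cosh\xi$ at $0$ and at $\infty$.
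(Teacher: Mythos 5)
Your method---reducing $K$ to the spherical integral $\int_{S^{n-1}}(\cosh\xi-\langle\theta_0,\theta'\rangle)^{-(n+2\gamma)/2}\,dS(\theta')$, then blowing up at the colatitude origin for small $|\xi|$ and factoring out $\cosh\xi$ for large $|\xi|$---is the same route taken in \cite{dpgw}, and both asymptotic computations are carried out correctly (the short-range constant and the long-range prefactor also look right). One bookkeeping slip in the derivation of the explicit form is worth flagging, though. The split you wrote, with $e^{t(n-2\gamma)/2}$ in front of $[v(t)-v(\tau)]$ and $v(\tau)$ in the residual, would, after absorbing the leftover weight $e^{\tau(2\gamma-n)/2}$ coming from $dy$ and $|x-y|^{-(n+2\gamma)}$, produce the non-symmetric kernel $e^{\xi(n-2\gamma)/2}\int_{S^{n-1}}(\cosh\xi-\langle\theta_0,\theta'\rangle)^{-(n+2\gamma)/2}dS(\theta')$. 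This decays like $e^{-2\gamma\xi}$ as $\xi\to+\infty$ but like $e^{n\xi}$ as $\xi\to-\infty$, contradicting the even decay $e^{-\frac{n+2\gamma}{2}|\xi|}$ you set out to prove, and it would also turn the residual into a nonlocal convolution with $v(\tau)$ rather than the pointwise zeroth-order term $c_{n,\gamma}v(t)$ appearing in the definition of $\mathcal{L}_\gamma$. The correct decomposition is $e^{t(n-2\gamma)/2}v(t)-e^{\tau(n-2\gamma)/2}v(\tau)=e^{\tau(n-2\gamma)/2}[v(t)-v(\tau)]+[e^{t(n-2\gamma)/2}-e^{\tau(n-2\gamma)/2}]v(t)$, i.e.\ weight $e^{\tau(n-2\gamma)/2}$ on the difference and $v(t)$, not $v(\tau)$, in the residual; after dividing out the weight this reads $[v(t)-v(\tau)]+[e^{(t-\tau)(n-2\gamma)/2}-1]v(t)$, which yields precisely the even spherical-integral kernel you stated plus a genuine zeroth-order term. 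Since your subsequent phrase ``residual piece $[e^{\xi(n-2\gamma)/2}-1]v(t)$'' already carries the $v(t)$ of the correct split, this looks like a $t\leftrightarrow\tau$ slip rather than a conceptual gap, and your asymptotic analysis of the (correctly stated) $K$ is unaffected.
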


Since we are looking for periodic solutions of (\ref{eq202}), we assume that $v(t+L)=v(t)$; in this case, equation (\ref{eq202}) becomes
\begin{equation*}
\mathcal{L}_\gamma^L(v)=c_{n,\gamma}v^\beta,\quad v>0,
\end{equation*}
where
\begin{equation*}
\mathcal{L}_\gamma^L(v)=\kappa_{n,\gamma}
P.V.\int_{-\frac{L}{2}}^{\frac{L}{2}}(v(t)-v(\tau))K_L(t-\tau)d\tau+c_{n,\gamma}v
\end{equation*}
for the singular kernel
\begin{equation*}
K_L(t-\tau)=\sum_{j\in Z}K(t-\tau-jL).
\end{equation*}

We are going to consider the problem
\begin{equation}\label{eq204}
\left\{\begin{split}
&\mathcal{L}_\gamma^L(v)=c_{n,\gamma}v^\beta  \ \text{ in } \ (-\tfrac{L}{2}, \tfrac{L}{2}),\\
&v'(-\tfrac{L}{2})=v'(\tfrac{L}{2})=0.
\end{split}
\right.
\end{equation}
For this we shall work with the norm given by
\begin{equation*}
\|v\|_{H_L^\gamma}=\Big( \int_{-L/2}^{L/2}\int_{-L/2}^{L/2}(v(t)-v(\tau))^2K_L(t-\tau)\,d\tau dt+\int_{-L/2}^{L/2}v^2\,dt\Big)^{1/2},
\end{equation*}
and the following functional space
\begin{equation*}
H_L^\gamma=\left\{v:(-\tfrac{L}{2}, \tfrac{L}{2})\to \R\,\,;\,\, v'(-\tfrac{L}{2})=v'(\tfrac{L}{2})=0 \mbox{ and } \|v\|_{H_L^\gamma}<\infty\right\}.
\end{equation*}

\begin{proposition}\label{pro201}
Consider problem \eqref{eq204}. Then for $L$ large there exists a unique positive  solution $v_L$ in $H_L^\gamma$ with the following properties
\begin{itemize}
\item[(a)] $v_L$ is even in $t$;
\item[(b)] $v_L=\sum_{j\in Z}v(t-jL)+\psi_L$, where $\|\psi_L\|_{H^L_\gamma}\to 0$ as $L\to \infty$,
\end{itemize}
where
\begin{equation*}
v(t):=(\cosh t)^{-\frac{n-2\gamma}{2}}
\end{equation*}
corresponds to the standard bubble solution.

More precisely, for $\gamma\in (0,1)$, and for $L$ large we have the following Holder estimates on $\psi_L$:
\begin{equation*}
\|\psi_L\|_{\mathcal C^{2\gamma+\alpha}(-L/2,L/2)}\leq Ce^{-\frac{(n-2\gamma)L}{4}(1+\xi)}
\end{equation*}
for some $\alpha\in(0,1)$, and $\xi>0$ independent of $L$ large.
\end{proposition}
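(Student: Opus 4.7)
My plan is to construct $v_L$ as a small perturbation of the periodized bubble
\[ V_L(t):=\sum_{j\in\mathbb Z}v(t-jL), \]
which is $L$-periodic and even in $t$. A key algebraic observation is that for $L$-periodic $\phi$, the periodization $K_L(\xi)=\sum_{j\in\mathbb Z}K(\xi-jL)$ satisfies $\int_{-L/2}^{L/2}\phi(\tau)K_L(t-\tau)\,d\tau=\int_{\mathbb R}\phi(\tau)K(t-\tau)\,d\tau$; hence $\mathcal L_\gamma^L V_L=\mathcal L_\gamma V_L=c_{n,\gamma}\sum_{j\in\mathbb Z}v(t-jL)^\beta$ by the linearity of $\mathcal L_\gamma$ and the fact that each translate $v(\cdot-jL)$ solves \eqref{eq202}. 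Writing $v_L=V_L+\psi_L$, problem \eqref{eq204} reduces to
\[ \mathcal L_\gamma^L\psi_L-c_{n,\gamma}\beta V_L^{\beta-1}\psi_L=-E_L+N_L(\psi_L), \]
where the error is the purely algebraic bubble-interaction term
\[ E_L(t)=c_{n,\gamma}\Big[V_L(t)^\beta-\sum_{j\in\mathbb Z}v(t-jL)^\beta\Big], \]
and $N_L(\psi_L):=c_{n,\gamma}[(V_L+\psi_L)^\beta-V_L^\beta-\beta V_L^{\beta-1}\psi_L]$ is quadratic in $\psi_L$.

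The next step is to estimate $E_L$. Since $v(t)\sim e^{-\frac{n-2\gamma}{2}|t|}$ as $|t|\to\infty$, on the fundamental domain $(-L/2,L/2)$ the dominant contribution to $E_L$ comes from the overlap of the central bubble $v(t)$ with its two nearest neighbors $v(t\pm L)$, of order $v(t)^{\beta-1}v(t\pm L)\lesssim e^{-\frac{n-2\gamma}{4}(1+\xi)L}$ for some $\xi=\xi(n,\gamma)>0$, which gives the claimed decay. For the linear theory, the task is to invert $\mathbb L:=\mathcal L_\gamma^L-c_{n,\gamma}\beta V_L^{\beta-1}$ on the even subspace of $H_L^\gamma$. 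Here the crucial ingredient is Lemma \ref{lemma:non-degenerate}: among the Jacobi fields at a single bubble $w$, only the dilation $\frac{n-2\gamma}{2}w+x\cdot\nabla w$ is radial, and under the change of variables \eqref{wv} it corresponds to $v'(t)$, which is odd; thus the kernel of the single-bubble linearization, restricted to even functions of $t$, is trivial. A blow-up/Fredholm argument, localizing around each translated bubble, then yields $\|\mathbb L^{-1}\|\le C$ uniformly in $L$. Consequently $\psi\mapsto\mathbb L^{-1}(-E_L+N_L(\psi))$ is a contraction on a ball of radius $O(e^{-\frac{n-2\gamma}{4}(1+\xi)L})$ in the even subspace of $H_L^\gamma$, producing $\psi_L$ via the Banach fixed-point theorem.

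Finally, the Hölder bound in (b) follows by bootstrapping: one rewrites the equation for $\psi_L$ as a linear equation $\mathcal L_\gamma^L\psi_L=F_L$ with $F_L$ already exponentially small in $L$, and applies the interior Schauder-type regularity for the integro-differential operator $\mathcal L_\gamma^L$ (whose kernel matches that of the $2\gamma$-fractional Laplacian near the diagonal, by Lemma \ref{lemma201}) to upgrade $\psi_L$ to $\mathcal C^{2\gamma+\alpha}$ with the same exponential bound for some $\alpha\in(0,1)$. Uniqueness in the prescribed class is built into the contraction argument, while positivity of $v_L=V_L+\psi_L$ follows from $V_L>0$ together with the $L^\infty$-smallness of $\psi_L$. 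The main technical obstacle I expect is the uniform invertibility of $\mathbb L$ as $L\to\infty$: one must rule out an accumulation of eigenvalues of $\mathbb L$ at zero coming from the infinitely many translated bubbles comprising $V_L$, which demands a careful concentration-compactness argument adapted to the non-local periodic setting, and is precisely where the non-degeneracy of Lemma \ref{lemma:non-degenerate} is used decisively.
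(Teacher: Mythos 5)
Your proposal follows essentially the same strategy as the paper's proof: decompose $v_L = \sum_j v(\cdot - jL) + \psi_L$, observe that the error reduces to the purely algebraic interaction term $c_{n,\gamma}\big[\sum_j v_j^\beta - (\sum_j v_j)^\beta\big]$ of size $O(e^{-\frac{(n-2\gamma)L}{4}(1+\xi)})$, invert the linearized operator on the subspace of even functions using the non-degeneracy of the bubble (with the radial Jacobi field becoming $v'(t)$, odd in $t$), and close via a contraction; the paper simply packages the same steps in the variational language of $F_L'$, $F_L''$, while you write the fixed-point equation directly. The bootstrapping to $\mathcal C^{2\gamma+\alpha}$ and the positivity/uniqueness remarks also match the paper's treatment.
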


As an immediate consequence of Proposition \ref{pro201} we obtain periodic solutions for the original equation \eqref{eq202}:

\begin{coro}\label{pro202}
For $L$ large there exists a unique positive  solution $v_L$ of \eqref{eq202} with the following properties
\begin{itemize}
\item[(a)] $v_L$ is periodic and even in $t$;
\item[(b)] $v_L=\sum_{j\in \mathbb Z}v(t-jL)+\psi_L$, where $\|\psi_L\|_{H_L^\gamma}\to 0$ as $L\to \infty$ in $(-L/2, L/2)$.
\end{itemize}
More precisely, for $\gamma\in (0,1)$, and for $L$ large we have the following Holder estimates on $\psi_L$:
\begin{equation*}
\|\psi_L\|_{\mathcal C^{2\gamma+\alpha}}\leq Ce^{-\frac{(n-2\gamma)L}{4}(1+\xi)}
\end{equation*}
for some $\alpha\in (0,1)$, and $\xi>0$ independent of $L$.
\end{coro}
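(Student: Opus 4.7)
The plan is to obtain $v_L$ as the $L$-periodic extension of the solution produced by Proposition \ref{pro201} on the fundamental domain $(-L/2, L/2)$, and then to verify that this extension actually satisfies equation \eqref{eq202} on the whole of $\mathbb R$ (not merely the windowed equation \eqref{eq204}). Properties (a) and (b) and uniqueness will then follow almost immediately from the corresponding statements in Proposition \ref{pro201}.

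The key identity to establish is that, for any sufficiently regular $L$-periodic function $w : \mathbb R \to \mathbb R$,
$$
\mathcal{L}_\gamma w(t) \;=\; \mathcal{L}_\gamma^L w(t) \qquad \text{for all } t \in \mathbb R.
$$
This follows by partitioning $\mathbb R = \bigcup_{j \in \mathbb Z}(-\tfrac{L}{2}+jL, \tfrac{L}{2}+jL)$, using periodicity $w(\tau + jL) = w(\tau)$ inside each piece of the principal-value integral defining $\mathcal{L}_\gamma$, and recognizing the periodized kernel $K_L(t-\tau) = \sum_{j \in \mathbb Z} K(t - \tau - jL)$ after interchanging sum and integral. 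The exponential decay of $K$ at infinity provided by Lemma \ref{lemma201} makes this interchange rigorous and also guarantees that $K_L$ is well-defined away from the diagonal.

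With this identity in hand, the construction becomes essentially bookkeeping. Let $v_L^\circ$ denote the solution from Proposition \ref{pro201} on $(-L/2, L/2)$ and define $v_L$ on $\mathbb R$ by $v_L(t + jL) := v_L^\circ(t)$ for $t \in (-L/2, L/2)$ and $j \in \mathbb Z$. The evenness of $v_L^\circ$ together with the Neumann condition $(v_L^\circ)'(\pm L/2) = 0$ ensures that $v_L$ matches continuously and with continuous first derivative across each seam $\pm L/2 + jL$. The identity above, combined with $\mathcal{L}_\gamma^L v_L^\circ = c_{n,\gamma} (v_L^\circ)^\beta$, then yields $\mathcal{L}_\gamma v_L = c_{n,\gamma} v_L^\beta$ on all of $\mathbb R$; standard interior regularity for the nonlocal operator $\mathcal{L}_\gamma$ upgrades this to a $\mathcal C^{2\gamma+\alpha}$ solution with no loss at the seams, since the equation is translation-invariant.

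Properties (a) and (b) now transfer directly: periodicity and evenness of $v_L$ hold by construction, and since $\sum_{j \in \mathbb Z} v(\cdot - jL)$ is itself $L$-periodic, the remainder $\psi_L := v_L - \sum_j v(\cdot - jL)$ is $L$-periodic, so the $\mathcal C^{2\gamma+\alpha}(-L/2, L/2)$ bound from Proposition \ref{pro201} promotes to a global $\mathcal C^{2\gamma+\alpha}(\mathbb R)$ bound by translation. Uniqueness comes from running the argument in reverse: any positive, even, $L$-periodic solution of \eqref{eq202} automatically satisfies $v'(\pm L/2) = 0$ (by evenness and periodicity) and, by the operator identity, solves \eqref{eq204}, so Proposition \ref{pro201} forces it to coincide with $v_L$. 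The only step that requires genuine care is the operator identity itself; once it is established everything else is routine.
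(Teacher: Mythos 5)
Your proposal is correct and takes the same route the paper implicitly intends: the paper labels the Corollary ``an immediate consequence'' of Proposition \ref{pro201} and supplies no further argument, and your proof fills in exactly the missing content. The key identity $\mathcal{L}_\gamma w = \mathcal{L}_\gamma^L w$ for $L$-periodic $w$ (obtained by partitioning $\mathbb R$ into translates of $(-L/2,L/2)$, shifting via $w(\tau+jL)=w(\tau)$, and resumming the kernel into $K_L$) is precisely what makes ``immediate'' honest, and your observation that evenness together with periodicity forces $v'(\pm L/2)=0$ cleanly closes the uniqueness direction by reducing any candidate periodic solution back to the windowed problem \eqref{eq204}. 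The $C^1$-matching at the seams and the propagation of the $\mathcal C^{2\gamma+\alpha}$ bound by translation are handled correctly.
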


\medskip
\noindent{\bf Proof of Proposition \ref{pro201}.}
We denote the function
\begin{equation}\label{v0L}
v_{0,L}(t)=\sum_{j=-\infty}^\infty v_j(t),
\end{equation}
where $v_j(t)=v(t-jL)=(\cosh (t-jL))^{-\frac{n-2\gamma}{2}}$.
By symmetry, this function satisfies the boundary condition at $t=\pm \frac{L}{2}$. We consider next the functional
\begin{equation*}\begin{split}
F_L(v)=\frac{\kappa_{n,\gamma}}{4}
\int_{-L/2}^{L/2}\int_{-L/2}^{L/2}(v(t)-v(\tau))^2K_L(t-\tau)\,d\tau dt+\frac{c_{n,\gamma}}{2}\int_{-L/2}^{L/2}v^2\,dt
-\frac{c_{n,\gamma}}{\beta+1}\int_{-L/2}^{L/2}v^{\beta+1}\,dt,
\end{split}\end{equation*}
in the space
\begin{equation*}
v\in H_*^\gamma, \quad H_*^\gamma=\{v\in H_L^\gamma, \ v(t)=v(-t)\}.
\end{equation*}
Solutions of equation (\ref{eq204}) are critical points of $F_L$. Moreover, we have
\begin{equation*}
F_L'(v_{0,L})[\varphi]=\langle \mathcal{L}_\gamma^L(v_{0,L}), \varphi\rangle-c_{n,\gamma}\int_{-L/2}^{L/2}v_{0,L}^\beta \varphi \,dt=\langle S(v_{0,L}), \varphi\rangle
\end{equation*}
for every test function $\varphi$, where $\langle\, , \,\rangle$ is defined by
\begin{equation*}\begin{split}
\langle \mathcal{L}_\gamma^L (v_{0,L}), \varphi\rangle&=\frac{\kappa_{n,\gamma}}{2} P.V.\int_{-L/2}^{L/2}\int_{-L/2}^{L/2}(v_{0,L}(t)-v_{0,L}(\tau))(\varphi(t)-\varphi(\tau))K_L(t-\tau)\,dt d\tau\\
&+c_{n,\gamma}\int_{-L/2}^{L/2}v_{0,L} \varphi(t) \,dt
\end{split}\end{equation*}
and
\begin{equation*}
S(v_{0,L}):=\mathcal{L}_\gamma^L(v_{0,L})-c_{n,\gamma}v_{0,L}^\beta.
\end{equation*}
Therefore, using H\"older's inequality, we easily get
\begin{equation*}
\|F'_L\|_{H^\gamma_L}\leq C\|S(v_{0,L})\|_{L^2},
\end{equation*}
where $C$ is independent of $L$ large. Hence, we need to estimate the $L^2$ norm of $S(v_{0,L})$ in $(-L/2,L/2)$. Recalling  \eqref{v0L} and  the definition of $v_j$, we have
\begin{equation*}\begin{split}
S(v_{0,L})&=
c_{n,\gamma}\big[\sum_{j=-\infty}^\infty v_j^\beta-(\sum_{j=-\infty}^\infty v_j)^\beta\big] \quad\mbox{ in }(-L/2,L/2).
\end{split}\end{equation*}
For $t\geq 0$, since $v_{-j}\leq v_j$, by symmetry, for $L$ large,
\begin{equation*}
|S(v_{0,L})|\leq C v_0^{\beta-1}\sum_{j\neq 0}v_j+\sum_{j\neq 0}v_j^\beta.
\end{equation*}
As a consequence, we have
\begin{equation*}
\int_{-L/2}^{L/2}S(v_{0,L})^2dt\leq C\int_{-L/2}^{L/2}v_0^{2(\beta-1)}(\sum_{j\neq 0}v_j)^2+\int_{-L/2}^{L/2}(\sum_{j\neq 0}v_j^\beta)^2.
\end{equation*}
In order to estimate the first term, we divide the domain into two subsets, $\{|t|\leq \frac{\alpha L}{2}\}$ and $\{|t|\geq \frac{\alpha L}{2}\}$ for $\alpha \in (0,1)$. In these two sets we have the estimates $\sum_{j\neq 0}v_j\leq Ce^{-\frac{(n-2\gamma)L}{4}(2-\alpha)}$ and $\sum_{j\neq 0}v_j\leq Ce^{-\frac{(n-2\gamma)L}{4}}$, respectively, by the exponential decay of $v_0$. Hence one easily finds
\begin{equation*}\begin{split}
\int_{-L/2}^{L/2}v_0^{2(\beta-1)}(\sum_{j\neq 0}v_j)^2dt&\leq Ce^{-\frac{(n-2\gamma)L}{2}(2-\alpha)}+Ce^{-\frac{(n-2\gamma)L}{2}}e^{-2(\beta-1)\frac{(n-2\gamma)}{2}\frac{\alpha L}{2}}\\
&=Ce^{-\frac{(n-2\gamma)L}{2}(2-\alpha)}+Ce^{-\frac{(n-2\gamma)L}{2}(1+\frac{4\alpha \gamma}{n-2\gamma})}
\end{split}\end{equation*}
and
\begin{equation*}
\int_{-L/2}^{L/2}(\sum_{j\neq 0}v_j^\beta)^2dt\leq Ce^{-\frac{(n-2\gamma)L\beta}{2}}.
\end{equation*}
In conclusion, we have
\begin{equation*}
\|S(v_{0,L})\|_{L^2(-L/2,L/2)}\leq Ce^{-\frac{(n-2\gamma)L}{4}(1+\xi)}
\end{equation*}
for  some $\xi>0 $ independent of $L$ large.

Next we claim that the operator $F_L''(v_{0,L})$ is invertible in the space $H_*^\gamma(-L/2,L/2)$. This follows from the non-degeneracy of the standard bubble and the fact that we are working in the subspace of even functions in $t$. This allows us to solve the problem via local inversion. In fact, we write $v_L=v_{0,L}+\psi$ and we have $F_L'(v_{0,L}+\psi)=0$ if and only if $\psi\in H_*^\gamma(-L/2,L/2)$ satisfies
\begin{equation*}
\psi=-(F_L(v_{0,L}))''[F_L'(v_{0,L})+N(\psi)],
\end{equation*}
where $N(\psi)=c_{n,\gamma}[(v_{0,L}+\psi)^\beta-v_{0,L}^\beta-\beta v_{0,L}^{\beta-1}\psi]$ is superlinear in $\psi$. We can apply the contraction mapping theorem, obtaining a solution $\psi$ which satisfies
\begin{equation*}
\|\psi\|_{H_L^\gamma}\leq C\|F_L'(v_{0,L})\|_{H_L^\gamma}\leq C\|S(v_{0,L})\|_{L^2}\leq Ce^{-\frac{(n-2\gamma)L}{4}(1+\xi)}.
\end{equation*}
For $\gamma \in (0,1)$, by the regularity estimates given in \cite{dpgw} and summarized in Remark $3.12$ in the same paper (see also \cite{hitchhiker}), it follows that $\psi$ is smooth and we have the following estimate:
\begin{equation*}
\|\psi\|_{\mathcal C^{2\gamma+\alpha}}\leq Ce^{-\frac{(n-2\gamma)L}{4}(1+\xi)}.
\end{equation*}
The maximum principle of \cite{dpgw} concludes the proof of the proposition.

\qed

\begin{remark}
Since the equation for $v$ is translational invariant, if $v(t) $ is a solution of (\ref{eq202}), then $v(t-t_0)$ is also a solution. In the following, we will use the periodic solution $v_L$ with period $L$ which attains its minimum at the points $t=jL$, $j\in\mathbb Z$. By Corollary  \ref{pro202}, this periodic solution can be expressed as a perturbation of a bubble tower (or Dancer solution)
\begin{equation*}
v_L(t)=\sum_{j=-\infty}^\infty v\big(t-\tfrac{L}{2}-jL\big)+\psi_L(t),
\end{equation*}
where $\|\psi_L\|_{\mathcal C^{2\gamma+\alpha}}\leq Ce^{-\frac{(n-2\gamma)L}{4}(1+\xi)}$ for some $\xi>0$ independent of $L$. For the rest of the paper we write $$t_j=\tfrac{L}{2}+jL,\quad j\in \mathbb Z,$$
and
\begin{equation*}
v_j(t):=v(t-t_j)=\cosh(t-t_j)^{-\frac{n-2\gamma}{2}}, \ t\in \R.\\
\end{equation*}
\end{remark}

Now we consider only half a bubble tower; this is needed in order to have fast decay far from the singularity ($t\to -\infty$). We define
\begin{equation*}
\tilde{v}_L(t)=\sum_{j=0}^\infty v\big(t-\tfrac{L}{2}-jL\big)+\psi_L(t),
\end{equation*}
then one has the following asymptotic behaviour of $\tilde{v}_L$:
\begin{equation*}
\tilde{v}_L(0)=e^{-\frac{(n-2\gamma)L}{4}}(1+o(1)),
\end{equation*}
(this is the \emph{neck size}). And for $t\leq 0$, i.e. $|x|\geq 1$, using the fact that $v$ is exponential decaying,
\begin{equation*}\begin{split}
\tilde{v}_L(t)=v_0(t)(1+o(1))=\big(\cosh(t-\tfrac{L}{2})\big)^{-\frac{n-2\gamma}{2}}(1+o(1))
=|x|^{-\frac{n-2\gamma}{2}}e^{-\frac{(n-2\gamma)L}{4}}(1+o(1)),
\end{split}\end{equation*}
and the corresponding solution $\tilde{u}_L=|x|^{-\frac{n-2\gamma}{2}}\tilde{v}_L$ satisfies
\begin{equation}\label{estimacion-lejos}
\tilde{u}_L(x)=|x|^{-\frac{n-2\gamma}{2}}\tilde{v}_L=|x|^{-(n-2\gamma)}e^{-\frac{(n-2\gamma)L}{4}}(1+o(1)).
\end{equation}

\section{Construction of the approximate solutions}\label{sec3}

We now proceed to define a family of approximate solutions to the problem using the Delaunay solutions from the previous section. We know that the Delaunay solution with period $L$ has the form of a bubble tower, i.e,
\begin{equation}\label{exact-Delaunay}\begin{split}
u_L(x)&=|x|^{-\frac{n-2\gamma}{2}}\Big(\sum_{j=-\infty}^\infty v\big(-\log |x|-\tfrac{L}{2}-jL\big)+\psi_L(-\log |x|)\Big)\\
&=:\sum_{j=-\infty}^\infty \Big(\frac{\lambda_j}{\lambda_j^2+|x|^2}\Big)^{\frac{n-2\gamma}{2}}+\phi_L(x),
\end{split}\end{equation}
where
\begin{equation}\label{psi}\lambda_j=e^{-\frac{1+2j}{2}L}\quad\text{and} \quad\phi_L(x)=|x|^{-\frac{n-2\gamma}{2}}\psi_L(-\log |x|),\end{equation}
for $\psi_L$ the perturbation function constructed in Corollary \ref{pro202}.

As we have mentioned, one of the main ideas is that, although we would like the approximate solution to have Delaunay-type singularities around each point of $\Sigma$, it should have a fast decay once we are away from $\Sigma$ in order to glue to the background manifold $\mathbb R^n$. To this end, we will only take half a Delaunay solution (this is, only values $j=0,1,\ldots$).

In addition, we would like to introduce some perturbation parameters $R\in\mathbb R$, $a\in\mathbb R^n$, since each standard bubble has $n+1$ free parameters which correspond to scaling and translations. This is done for each bubble in the bubble tower independently, thus we will have an infinite dimensional set of perturbations.

Keeping both aspects in mind, let us give the precise construction of our approximate solution $\bar u$. First, one can always assume that all the balls $B(p_i,2)$ are disjoint, since we may dilate the problem by some factor $\kappa>0$ that will change the set $\Sigma$ into $\kappa \Sigma$ and a function $u$ defined in $\R^n \backslash \Sigma$ into $\kappa^{-\frac{n-2\gamma}{2}}u(x/\kappa)$ defined in $\R^n \backslash \kappa \Sigma$.

Let $\chi$ be a cut-off function such that
\begin{equation*}
\chi(x)=\left\{\begin{array}{l}
1, \ \text{ if } |x|\leq \frac{1}{2},\\
0, \ \text{ if }|x|\geq 1,\\
\chi\in [0,1], \ \text{ if }\frac{1}{2}\leq |x|\leq 1,
\end{array}
\right.
\end{equation*}
and set $\chi_i(x)=\chi(x-p_i)$.

Given $L>0$ large enough, we will fix
\begin{equation*}
\bar{L}=(L_1,\cdots,L_k)
\end{equation*}
to be the Delaunay parameters, which also are related to the neck sizes of each Delaunay solution. They will be chosen (large enough) in the proof. They will satisfy the following conditions:
\begin{equation*}
|L_i-L|\leq C.
\end{equation*}
More precisely, they will be related by the following:
\begin{equation}\label{q_i}q_i e^{-\frac{(n-2\gamma)L}{4}}=e^{-\frac{(n-2\gamma)L_i}{4}},\end{equation}
for some $q_i>0$, $i=1,\cdots,k$.

Also, for $i=1,\ldots,k$, $j=0,1,\ldots$, set $a_j^i\in\mathbb R^n$ and $R_j^i=R^i(1+r_j^i)\in\mathbb R$ to be the perturbation parameters. Define the approximate solution $\bar u$ as
\begin{equation}\label{eq301}\begin{split}
\bar{u}(x)&=\sum_{i=1}^k\Big[
\sum_{j=0}^\infty \big[\,|x-p_i-a_j^i|^{-\frac{n-2\gamma}{2}}v(-\log |x-p_i-a_j^i|-\tfrac{L_i}{2}-jL_i+\log R_j^i)\big]\\
&\qquad\quad+
\chi_i(x)|x-p_j|^{-\frac{n-2\gamma}{2}}\psi_{i}(-\log|x-p_i|+\log R^i)\Big]\\
&=\sum_{i=1}^k\Big[\sum_{j=0}^\infty \Big(\frac{\lambda_j^i}{|\lambda_j^i|^2+|x-p_i-a_j^i|^2}\Big)^{\frac{n-2\gamma}{2}}
+\chi_i(x)\phi_{i}(x-p_i)
\Big]\\
&=:\sum_{i=1}^k\Big[\sum_{j=0}^\infty w_j^i+\chi_i(x)\phi_{i}(x-p_i)\Big],
\end{split}
\end{equation}
where we have set $$\lambda_j^i=R_j^i e^{-\frac{(1+2j)L_i}{2}}.$$

Next we will explain in detail the perturbation parameters $q_i, a_j^i, R^i_j$.
First fix a set of positive numbers $q_1^b,\cdots,q_k^b$, and let $a_0^{i,b}, R^{i,b}$ be determined by the following balancing conditions:
\begin{eqnarray}\label{balance1}
q^b_i=A_2\sum_{i'\neq i}q^b_{i'}(R^{i,b}R^{i',b})^{\frac{n-2\gamma}{2}}|p_i-p_{i'}|^{-(n-2\gamma)},\quad i=1,\ldots,k,
\end{eqnarray}
and
\begin{eqnarray}\label{balance2}
\frac{{a}_0^{i,b}}{(\lambda_0^{i,b})^2}=-\frac{A_3}{A_0}\sum_{i'\neq i}\frac{p_{i'}-p_i}{|p_{i'}-p_i|^{n-2\gamma+2}}\frac{q^b_{i'}}{q^b_i}
(R^{i,b}R^{i',b})^{\frac{n-2\gamma}{2}},
\quad i=1,\ldots,k,
\end{eqnarray}
where $\lambda_0^{i,b}=R^{i,b} e^{-\frac{L^b_i}{2}}$, and the $L^b_i$ are defined from the $q_i^b$ by (\ref{q_i}), $i=1,\ldots,k$ and the constants $A_0$,$A_2>0$, $A_3<0$ are defined in the appendix.

\begin{remark}
It has been shown in Remark 3 of \cite{mp} that for $\bar{q}:=(q_1^b,\dots,q_k^b)$ in the positive octant, there exists a solution $R^{i,b}$ to equation \eqref{balance1}. Once $R^{i,b}$ is chosen, then we can use equation \eqref{balance2} to determine ${a}_0^{i,b}$.
\end{remark}

Although the meaning of these compatibility conditions will become clear in the next sections, we have just seen that they are the analogous to those of \cite{mp} for the local case. The idea is that, at the \emph{base} level, perturbations should be very close to those for a single bubble. This also shows, in particular, that although our problem is non-local, very near the singularity it presents a local behavior due to the strong influence of the underlying geometry.

However, for the rest of the parameters $a_j^i, R_j^i$, $i=1,\ldots k$, $j=0,1,\ldots$, we will have to solve an infinite dimensional system of equations. First let $R^i, q_i$ be $2k$ parameters which satisfy
\begin{equation}\label{parar}
|R^i-R^{i,b}|\leq C, \quad  |q_i-q_i^b|\leq C.
\end{equation}
and let $\lambda_0^{i,0}=R^{i} e^{-\frac{(1+2j)L_i}{2}}$.

Set also $\hat{a}_0^i$ given by $\frac{a_0^{i,0}}{(\lambda_0^{i,0})^2}=\hat{a}_0^{i}$ be $k$ parameters satisfying
\begin{equation}\label{paraa}
|\hat{a}_0^i-\hat{a}_0^{i,b}|\leq C,
\end{equation}
where $\hat{a}_0^{i,b}=\frac{{a}_0^{i,b}}{(\lambda_0^{i,b})^2}$.

Last we define the parameters $R_j^i, a_j^i$ by
\begin{equation}\label{para1}
R_j^i=R^i(1+r_j^i), \quad \frac{a_j^i}{(\lambda_j^i)^2}=\bar{a}_j^i=\hat{a}_0^i+\tilde{a}_j^i,\quad i=1,\ldots,k, \quad j=0,\ldots,\infty,
\end{equation}
where $r_j^i, \tilde{a}_j^i$ satisfy
\begin{equation}\label{para2}
|r_j^i|\leq Ce^{-\tau t^i_j}, \ |\tilde{a}_j^i|\leq Ce^{-\tau t^i_j},
\end{equation}
for some $\tau>0$, where $t_j^i=(\frac{1}{2}+j)L_i$. The exact value of the parameters will be determined in Section \ref{sec6}.

Let us give some explanation about the choice of parameters. Given the $k(n+2)$ balancing parameters $q_i^b, R^{i,b}, \hat{a}_0^{i,b}$ satisfying the balancing conditions (\ref{balance1})-(\ref{balance2}), we first choose $k(n+2)$ initial perturbation parameters $q_i, R^i, \hat{a}_0^{i}$ which are close to the balancing parameters, i.e (\ref{parar})-(\ref{paraa}). After that, we introduce infinitely many other perturbation parameters $\tilde{a}_j^i, r_j^i$ which are exponential decaying in $t_j^i$, i.e. (\ref{para1})-(\ref{para2}).

\medskip

We will prove next some quantitative estimates on the function $\bar{u}$, and in particular on its behaviour near the singular points. Before that we need to introduce the function spaces we will work with.

\begin{definition} We set the weighted norm
\begin{equation*}
\|u\|_{\mathcal C^\alpha_{\gamma_1,\gamma_2}}
=\|\dist(x,\Sigma)^{-\gamma_1}u\|_{\mathcal C^\alpha(B_1(\Sigma))}+\||x|^{-\gamma_2}u\|_{\mathcal C^\alpha(\R^n \backslash B_1(\Sigma))}.
\end{equation*}

\end{definition}

In other words, to check if $u$ is an element of some $\mathcal C^{\alpha}_{\gamma_1,\gamma_2}$, it is sufficient to check that $u$ is bounded by a constant times $|x-p_i|^{\gamma_1}$ and has its $\ell$-th order partial derivatives bounded by a constant times $|x-p_i|^{\gamma_1-\ell}$ for $\ell\leq \alpha$ near each singular point $p_i$. Away from the singular set $\Sigma$, $u$ is bounded by $|x|^{\gamma_2}$  and has its $\ell$-th order partial derivatives bounded by a constant times $|x|^{\gamma_2-\ell}$ for $\ell \leq \alpha$ (note that here we are implicitly assuming that $0\in\Sigma$, in order to simplify the notation).\\

First, we define $Z_{j,l}^i$ to be the (normalized) approximate kernels
\begin{equation*}
Z_{j,0}^i=\frac{\partial }{\partial r_{j}^i}w_{j}^i, \quad Z_{j,l}^i=\lambda_j^i
\frac{\partial }{\partial {a}_{j,l}^i}w_{j}^i=-\lambda_j^i\frac{\partial }{\partial x_l}w_j^i, \quad l=1,\cdots,n.
\end{equation*}

Without loss of generality, assume in the following that $p_i=0$. For $l=0$ we will repeatedly use the following estimates
\begin{equation}\label{Z_0}
|Z_{j,0}^i|\leq C\left\{\begin{split}
&|x|^{-\frac{n-2\gamma}{2}}|v_j^i|, \quad |x|\leq 1,\\
&|x|^{-(n-2\gamma)}(\lambda_j^i)^{\frac{n-2\gamma}{2}},\quad  |x|\geq 1.\\
\end{split}\right.
\end{equation}
In addition, for $l=1,\ldots,n$, we have
\begin{equation}\label{Z_l}
Z_{j,l}^i=(n-2\gamma)(v_j^i)^{1+\frac{2}{n-2\gamma}}
|x-a_j^i-p_i|^{-\frac{n-2\gamma}{2}-1}(x-a_j^i-p_i)_l,
\end{equation}
where we have used the obvious notation $w_j^i=|x-p_i-a^i_j|^{-\frac{n-2\gamma}{2}}v_j^i$. Then one has the following \emph{orthogonality} conditions (recentering at $p_i=0$):
\begin{equation}\begin{split}\label{orthogonality1}
\int_{\R^n}&(w_j^i)^{\beta-1}Z_{j,l}^iZ_{j',l'}^i\,dx\\
&={\frac{4(n-2\gamma)^2}{n}}\delta_{l,l'}\int_{\R^n}|x|^{-2\gamma}(v_j)^{\beta-1}
|x|^{-\frac{n-2\gamma}{2}}(v_j)^{\frac{n-2\gamma+2}{n-2\gamma}}|x|^{-\frac{n-2\gamma}{2}}
v_{j'}^{\frac{n-2\gamma+2}{n-2\gamma}}\,dx+o(1)\\
&=\frac{4(n-2\gamma)^2}{n}\delta_{l,l'}\int_{\R^n}|x|^{-n}(v_j)^{\beta+\frac{2}{n-2\gamma}}
(v_{j'})^{1+\frac{2}{n-2\gamma}}\,dx+o(1)\\
&=\frac{4(n-2\gamma)^2}{n}(\delta_{l,l'}+o(1))\,e^{-\frac{n-2\gamma+2}{2}
|t^i_j-t^i_{j'}|}.
\end{split}\end{equation}
Similar estimates also hold true for $l=0$. Indeed,
\begin{equation}\label{orthogonality2}\begin{split}
\int_{\R^n}(w_j^i)^{\beta-1}Z_{j,0}^iZ_{j',0}^{i}\,dx
&=(1+o(1))\int_{\R^n}|x-p_i|^{-n}v_j^{\beta-1}v_j'v_{j'}'\,dx\\
&=C_0(1+o(1))e^{-\frac{n-2\gamma}{2}|t^i_j-t^i_{j'}|}
\end{split}\end{equation}
for some $C_0>0$.\\

From now on, we choose $-\frac{n-2\gamma}{2}<\gamma_1<\min\{-\frac{n-2\gamma}{2}+2\gamma,0\}$. Define also
\begin{equation*}\label{norm}
\|u\|_*=\|u\|_{\mathcal C^{2\gamma+\alpha}_{\min\{\gamma_1,-\frac{n-2\gamma}{2}+\tau\},-(n-2\gamma)}},  \quad \|h\|_{**}=\|h\|_{\mathcal C^{\alpha}_{\min\{\gamma_1,-\frac{n-2\gamma}{2}+\tau\}-2\gamma, -(n+2\gamma)}},
\end{equation*}
and denote by $\mathcal C_*$ and $\mathcal C_{**}$ the corresponding weighted H\"older spaces. Here $\tau$ (small enough) is given in the definition of the perturbation parameters (\ref{para1})-(\ref{para2}). Remark that, to simplify the notation, many times we will ignore the small $\tau$ perturbation and just the weight near the singular set as $\dist(x,\Sigma)^{-\gamma_1}$, $\dist(x,\Sigma)^{-(\gamma_1-2\gamma)}$, respectively.

Our main result in this section is the following proposition:

\begin{proposition}\label{proposition301}
Suppose the parameters satisfy \eqref{parar}-\eqref{para2}, and let $\bar{u}$ be as in \eqref{eq301}. Then for $L$ large enough, there exists a function $\phi$ and a sequence $\{c_{j,l}^i\}$ which satisfies the following properties:
\begin{equation}\label{eq303}
\left\{\begin{array}{l}
(-\Delta)^\gamma (\bar{u}+\phi)-c_{n,\gamma}(\bar{u}+\phi)^\beta=\displaystyle{\sum_{i=1}^k\sum_{j=0}^\infty \sum_{l=0}^n} c_{j,l}^i(w_j^i)^{\beta-1}Z_{j,l}^i,\\
\int_{\R^n}\phi (w_j^i)^{\beta-1}Z_{j,l}^i\,dx=0 \ \ \mbox{ for }i=1,\cdots,k, \ j=0,\cdots,\infty, \ l=0,\cdots,n.
\end{array}
\right.
\end{equation}
Moreover, one has
\begin{equation}\label{decay-phi}
\|\phi\|_{*}\leq Ce^{-\frac{(n-2\gamma)L}{4}(1+\xi)},
\end{equation}
for some $-\frac{n-2\gamma}{2}<\gamma_1<\min\{-\frac{n-2\gamma}{2}+2\gamma,0\}$ and $\xi>0$ independent of $L$ large.
\end{proposition}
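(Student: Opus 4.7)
\textbf{Proof plan for Proposition \ref{proposition301}.} The result is a standard Lyapunov--Schmidt reduction, but carried out in weighted Hölder spaces adapted to the infinite-dimensional kernel generated by the bubble tower at each $p_i$. I would organize the argument in three steps: an error estimate, an invertibility theorem for the linearized operator modulo approximate kernels, and a contraction mapping.

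\textbf{Step 1: Estimating the error.} I would first show that $E:=(-\Delta)^{\gamma}\bar{u}-c_{n,\gamma}\bar u^{\beta}$ satisfies $\|E\|_{**}\leq Ce^{-\frac{(n-2\gamma)L}{4}(1+\xi)}$ for some $\xi>0$. This is done region by region: on each ball $B(p_i,1/2)$ the cut-off $\chi_i$ equals $1$ and $\bar u$ agrees with the perturbed half-Delaunay solution, so $E$ reduces to the interaction between different bubbles in the tower and the nonlocal tail coming from the other singular points $p_{i'}$, which can be estimated using Proposition \ref{pro201}, \eqref{estimacion-lejos} and Lemma \ref{lemma201}. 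On the transition annuli $\{1/2\leq|x-p_i|\leq 1\}$ there is an extra contribution from the cut-off that is only polynomially large but enjoys the fast-decay of $\tilde v_L$ at $t=0$, producing an $e^{-(n-2\gamma)L/4}$ factor. Away from $\bigcup B(p_i,1)$ one uses \eqref{estimacion-lejos} to check that $\bar u^{\beta}$ and $(-\Delta)^{\gamma}\bar u$ are both of size $e^{-(n-2\gamma)L\beta/4}|x|^{-(n+2\gamma)}$ in the $**$-weight. The choice $\gamma_1>-\tfrac{n-2\gamma}{2}$ is exactly what makes the weighted norm finite near each singularity given Proposition \ref{pro201}.

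\textbf{Step 2: Linear theory.} I would next prove that for every $h\in\mathcal C_{**}$ there is a unique pair $(\phi,\{c^i_{j,l}\})\in\mathcal C_*\times\ell^\infty$ solving
\begin{equation*}
(-\Delta)^{\gamma}\phi-c_{n,\gamma}\beta\bar u^{\beta-1}\phi=h+\sum_{i,j,l}c^i_{j,l}(w_j^i)^{\beta-1}Z_{j,l}^i,\qquad \int_{\mathbb R^n}\phi (w_j^i)^{\beta-1}Z_{j,l}^i\,dx=0,
\end{equation*}
together with the a priori bound $\|\phi\|_{*}\leq C\|h\|_{**}$, uniformly in $L$ large. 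The multipliers are extracted by testing the equation against $Z_{j,l}^i$ and inverting the block-diagonal-dominant Gram matrix whose diagonal is bounded below by \eqref{orthogonality1}--\eqref{orthogonality2}, while off-diagonal entries decay like $e^{-\frac{n-2\gamma}{2}|t_j^i-t_{j'}^{i'}|}$. The a priori estimate is proved by contradiction: assume sequences $\phi_n,h_n,L_n$ with $\|\phi_n\|_*=1$ but $\|h_n\|_{**}\to 0$. Rescaling around each bubble center $p_i+a^i_j$ at scale $\lambda^i_j$, the function $\phi_n$ converges (on compact sets, by standard elliptic/fractional regularity and the weighted decay at rate $\gamma_1$) to a bounded solution of $(-\Delta)^{\gamma}\psi-c_{n,\gamma}\beta w^{\beta-1}\psi=0$, which by Lemma \ref{lemma:non-degenerate} lies in the span of $\{Z_{j,l}^i\}$; the passage to the limit in the orthogonality conditions forces $\psi\equiv 0$. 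Away from every bubble, the weighted bounds and the maximum principle of \cite{dpgw} force $\phi_n\to 0$ as well. This contradicts $\|\phi_n\|_*=1$ and establishes the estimate. The main obstacle of the whole proof sits here: the limit procedure at each bubble must be carried out while controlling the tails from all other bubbles and the nonlocal coupling introduced by the cut-offs $\chi_i$; the choices $\gamma_1<\min\{-\tfrac{n-2\gamma}{2}+2\gamma,0\}$ and far-field exponent $-(n-2\gamma)$ are what allow to rule out homogeneous resonances of the indicial equation near $\Sigma$ and at infinity respectively, so that the blow-up limit actually belongs to the non-degeneracy class of Lemma \ref{lemma:non-degenerate}.

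\textbf{Step 3: Nonlinear fixed point.} Writing $N(\phi)=c_{n,\gamma}[(\bar u+\phi)^{\beta}-\bar u^{\beta}-\beta\bar u^{\beta-1}\phi]$ and letting $T$ denote the bounded inverse produced in Step 2, the system \eqref{eq303} is equivalent to the fixed-point equation $\phi=T\bigl(-E-N(\phi)\bigr)$ in $\mathcal C_*$. Since $N$ is pointwise controlled by $\bar u^{\beta-2}\phi^2+|\phi|^{\beta}$ and $\bar u^{\beta-1}\in\mathcal C^{\alpha}_{-2\gamma+\min\{0,\gamma_1\}\cdot\frac{4\gamma}{n-2\gamma},-(n+2\gamma)}$, a direct computation shows that $\phi\mapsto T(-E-N(\phi))$ sends a ball of radius $\rho=2\|TE\|_{*}\lesssim e^{-\frac{(n-2\gamma)L}{4}(1+\xi)}$ into itself and is a contraction for $L$ large. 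The unique fixed point is the desired $\phi$ and automatically satisfies \eqref{decay-phi}; the corresponding constants $c^i_{j,l}$ come from Step 2. This completes the proposal.
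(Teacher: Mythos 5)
Your three-step plan (error estimate, linear theory with a priori bounds, contraction mapping) is exactly the structure the paper follows: your Step 1 is Lemma~\ref{lemma302}, your Step 2 is Lemma~\ref{lemma301} (including the testing against $Z_{j,l}^i$, the Toeplitz-type inversion for the $c_{j,l}^i$, and the contradiction/blow-up argument invoking the non-degeneracy of the bubble), and your Step 3 is the concluding fixed-point argument for $\phi = \mathbb L^{-1}(S(\bar u)) + c_{n,\gamma}\mathbb L^{-1}(N(\phi))$. The only cosmetic divergence is that the paper controls $\phi$ away from the bubbles via the Green's representation formula rather than the maximum principle, but this does not change the substance of the argument.
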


The proof is technically involved, so we prove some preliminary lemmas.  We first show a result involving the auxiliary linear equation
\begin{equation}\label{eq302}
\left\{\begin{array}{l}
(-\Delta)^\gamma \phi-c_{n,\gamma}\beta\bar{u}^{\beta-1}\phi=h+\displaystyle{\sum_{i=1}^k\sum_{j=0}^\infty \sum_{l=0}^n} c_{j,l}^i(w_j^i)^{\beta-1}Z_{j,l}^i,\\
\int_{\R^n}\phi (w_j^i)^{\beta-1}Z_{j,l}^i\,dx=0 \ \ \mbox{ for }i=1,\cdots,k, \ j=0,\cdots,\infty, \ l=0,\cdots,n.
\end{array}
\right.
\end{equation}

\begin{lemma}\label{lemma301}
Suppose the parameters satisfy \eqref{parar}-\eqref{para2}. Then there exists a weight $\gamma_1$ satisfying $-\frac{n-2\gamma}{2}<\gamma_1<\min\{-\frac{n-2\gamma}{2}+2\gamma,0\}$ such that, given $h$ with $\|h\|_{**}<\infty$, equation \eqref{eq302} has a unique solution $\phi$ in the space $\mathcal C_*$. Moreover, there exists a constant $C$ independent of $L$ such that
\begin{equation}\label{eq305}
\|\phi\|_{*}\leq C\|h\|_{**}.
\end{equation}
\end{lemma}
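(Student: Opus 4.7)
The plan is to prove the result by a contradiction/blow-up argument for the a priori estimate, and then deduce existence by a Fredholm alternative/approximation scheme. Note that uniqueness and the estimate \eqref{eq305} imply one another via linearity, so I will focus on the estimate first. Assume for contradiction that there is a sequence $L_n\to\infty$ with data $\phi_n, h_n$ and multipliers $c^i_{j,l,n}$ satisfying \eqref{eq302}, with $\|\phi_n\|_*=1$ and $\|h_n\|_{**}\to 0$. The first step is to control the Lagrange multipliers: pairing the equation with each $Z^{i'}_{j',l'}$ and integrating yields an infinite linear system for $\{c^i_{j,l,n}\}$ whose diagonal entries are of order one, by \eqref{orthogonality1}-\eqref{orthogonality2} taken at $(i,j,l)=(i',j',l')$, and whose off-diagonal entries decay exponentially in $|t^i_j-t^{i'}_{j'}|$. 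This diagonal dominance lets me solve for the $c^i_{j,l,n}$ uniquely and bound them by $C(\|\phi_n\|_*+\|h_n\|_{**})$ times the weighted mass of the corresponding $Z^i_{j,l}$.

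The core of the argument is the blow-up analysis at each bubble. For fixed $(i,j)$, rescale $\tilde\phi_n(y)=(\lambda^i_{j,n})^{(n-2\gamma)/2}\phi_n(p_i+a^i_{j,n}+\lambda^i_{j,n}y)$. The upper restriction $\gamma_1<-\frac{n-2\gamma}{2}+2\gamma$ is what makes the fractional Hölder regularity transfer to the rescaled sequence and yields local precompactness, while $\gamma_1>-\frac{n-2\gamma}{2}$ ensures the rescaled functions remain uniformly bounded in a neighborhood of the concentration point. In the limit $\tilde\phi_n\to\psi$ locally uniformly, $\psi$ is bounded and satisfies the linearized bubble equation $(-\Delta)^\gamma\psi=c_{n,\gamma}\beta w^{\beta-1}\psi$ on $\R^n$. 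By Lemma~\ref{lemma:non-degenerate} one has $\psi=\alpha_0(\frac{n-2\gamma}{2}w+x\cdot\nabla w)+\sum_{l=1}^n\alpha_l\partial_{x_l}w$; passing the orthogonality conditions in \eqref{eq302} to the limit (the exponential decay in \eqref{orthogonality1}-\eqref{orthogonality2} ensures only the diagonal terms survive after rescaling) forces each $\alpha_l=0$, hence $\psi\equiv 0$. Consequently $\phi_n\to 0$ on any compact set in the rescaled frame around each bubble; summing these local contributions with their $(\lambda^i_j)^{\gamma_1+(n-2\gamma)/2}$ weights, the near-$\Sigma$ portion of $\|\phi_n\|_*$ tends to zero.

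The third step handles the exterior region, where the weights $\dist(x,\Sigma)^{\gamma_1}$ and $|x|^{-(n-2\gamma)}$ in $\|\cdot\|_*$ matter. Using the estimate \eqref{estimacion-lejos} and its analogue for the perturbed half-Delaunay, one checks that $\bar u^{\beta-1}$ is pointwise bounded by a small multiple of a Hardy-type potential, namely $\dist(x,\Sigma)^{-4\gamma}$ near $\Sigma$ and $|x|^{-(n+2\gamma)}$ times a $e^{-(n-2\gamma)L/2}$ factor in the exterior. I would then construct a barrier of the form $C_1\sum_i|x-p_i|^{\gamma_1}+C_2|x|^{-(n-2\gamma)}$, verify by direct computation that it is a supersolution of the linearized equation in each region (the choice $\gamma_1>-\frac{n-2\gamma}{2}$ makes the Hardy correction favorable, and the decay $|x|^{-(n-2\gamma)}$ is exactly the one preserved under $(-\Delta)^\gamma$ up to lower order, so both inequalities can be closed by taking $L$ large), and apply the maximum principle from \cite{dpgw} to propagate the interior vanishing to the whole of $\R^n$, contradicting $\|\phi_n\|_*=1$. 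For existence, with the a priori bound secured, I would truncate to a large ball $B_R$ and work in the Hilbert space of functions in $H^\gamma(B_R)$ orthogonal to the finite span of $\{(w^i_j)^{\beta-1}Z^i_{j,l}\}_{j\le J}$; this is a standard Fredholm problem with trivial kernel, and letting $R,J\to\infty$ using the uniform bound \eqref{eq305} yields the solution. The hard part will be the barrier construction and controlling the non-local tail contributions across the near/far decomposition; both rely crucially on the exponential decay in $|t^i_j-t^{i'}_{j'}|$ coming from \eqref{orthogonality1}-\eqref{orthogonality2}, which is what makes the infinite-dimensional kernel tractable.
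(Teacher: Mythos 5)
Your blow-up scheme matches the paper's in its essential ingredients: the Toeplitz/diagonal-dominance argument for the Lagrange multipliers (the paper invokes Lemma~A.6 of~\cite{m} for exactly this), the rescaling $\tilde\phi_n(y)=(\lambda^i_{j})^{-\gamma_1}\phi_n(\lambda^i_j y)$ around each bubble, and the use of the non-degeneracy result (Lemma~\ref{lemma:non-degenerate}) to force the local limit $\psi\equiv 0$. These are the same steps in the same roles.

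Where you diverge is in the mechanism for globalizing the local vanishing. The paper does not use a barrier or a maximum principle at this stage at all: it writes the Green's representation $\phi(x)=\int_{\R^n}G(x,y)\bigl(c_{n,\gamma}\beta\bar u^{\beta-1}\phi+\bar h\bigr)(y)\,dy$ with $G(x,y)=C|x-y|^{-(n-2\gamma)}$, decomposes $\R^n$ into dyadic-like regions relative to $|x|$ and $\Sigma$, and estimates the convolution directly. The key point is that the $\bar u^{\beta-1}\phi$ part of the integral is small either because $\bar u^{\beta-1}$ carries an $e^{-\gamma L}$ factor in the exterior, or because, near $\Sigma$, the rescaled $\phi$ is $o(1)$ on the annuli $\{\lambda^i_j/R<|x-p_i|<R\lambda^i_j\}$ (from the blow-up step) while the complementary set contributes an $e^{-cR}$ smallness. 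Green's representation thus closes the estimate in one pass, with the nonlocal tails already built in.

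Your barrier proposal as written has a genuine problem. The function $|x|^{-(n-2\gamma)}$ is a fundamental solution of $(-\Delta)^\gamma$, so $(-\Delta)^\gamma |x|^{-(n-2\gamma)}=0$ for $x\neq 0$: it is not ``preserved up to lower order'' in a sign-definite way, and the supersolution inequality $(-\Delta)^\gamma\Phi-c_{n,\gamma}\beta\bar u^{\beta-1}\Phi\ge 0$ then fails in the exterior, where the left-hand side is strictly negative. Repairing this requires either a slight deformation of the exponent (losing decay, which the statement does not allow) or relying entirely on the nonlocal contribution of the near-$\Sigma$ piece $\sum_i|x-p_i|^{\gamma_1}\chi_i$ into the exterior to supply positivity; you flagged this ``non-local tail contribution across the near/far decomposition'' as the hard part, and indeed that is precisely the difficulty that the Green's function approach avoids. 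The paper never needs a comparison function: the positivity of $G$ and the decomposition of the domain do the patching. Your Fredholm truncation and the uniqueness-from-estimate remark are fine and equivalent to the paper's Step~3.
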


Note that Fredholm properties for the problem \eqref{eq302} in weighted spaces have been shown in \cite{Mazzeo:edge,Mazzeo:edge2}, since it is an example of an edge boundary value problem when we look at the usual extension formulation for the fractional Laplacian from \cite{Caffarelli-Silvestre}. However, in Lemma \ref{lemma301} we show, in addition, that the  estimates are independent of the choice of Delaunay parameters $(L_1,\ldots,L_k)$.

We will postpone the proof of this lemma, instead we will show first some quantitative estimates on the function $\bar{u}$ and in particular its behaviour near the singular set $\Sigma$ and at infinity.

\begin{lemma}\label{lemma302}
Let $S(\bar{u})=(-\Delta)^\gamma \bar{u}-c_{n,\gamma}\bar{u}^\beta$. Then if the parameters satisfy \eqref{parar}-\eqref{para2}, we have the following estimate on $S(\bar{u})$:
\begin{equation}\label{estimate-S}
\|S(\bar{u})\|_{**}\leq Ce^{-\frac{(n-2\gamma)L}{4}(1+\xi)}
\end{equation}
for some $\xi>0$ independent of $L$ large.
\end{lemma}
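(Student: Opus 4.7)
The plan is to decompose $\bar u$ as a sum of half-Dancers centred at each singularity, compare each half-Dancer to an exactly solvable ``full perturbed Delaunay'' model, and then control the remaining nonlinear interactions between different towers. Set
\[
U_i(x):=\sum_{j=0}^\infty w_j^i+\chi_i(x)\phi_i(x-p_i),\qquad \bar u=\sum_{i=1}^k U_i,
\]
and use the linearity of $(-\Delta)^\gamma$ to split
\[
S(\bar u)=\sum_{i=1}^k\bigl[(-\Delta)^\gamma U_i-c_{n,\gamma}U_i^\beta\bigr]+c_{n,\gamma}\Bigl[\sum_i U_i^\beta-\bigl(\sum_i U_i\bigr)^\beta\Bigr]=:\mathcal E_{\rm self}+\mathcal I_{\rm int}.
\]

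For $\mathcal E_{\rm self}$, I would compare each $U_i$ with the full perturbed Delaunay $U_i^\flat:=\sum_{j\in\mathbb Z}w_j^i+\phi_i(\cdot-p_i)$ (no truncation, no cutoff). When $r_j^i=\tilde a_j^i=0$, Corollary \ref{pro202} makes $U_i^\flat$ an exact Delaunay so $S(U_i^\flat)=0$; for $|r_j^i|,|\tilde a_j^i|\le Ce^{-\tau t_j^i}$ a Taylor expansion of the equation at each bubble gives errors of size $e^{-\tau t_j^i}$ at scale $\lambda_j^i$, which are absorbed by the $\mathcal C_{**}$ weight. The remaining difference $U_i-U_i^\flat=-\sum_{j<0}w_j^i-(1-\chi_i)\phi_i(\cdot-p_i)$ is passed through the nonlinearity: the missing tails are $|w_{-1}^i|\sim e^{-(n-2\gamma)L/4}$ on $B_1(p_i)$ (and exponentially smaller for $j\le -2$), and when paired with $(U_i^\flat)^{\beta-1}\sim e^{-\gamma L}$ in the post-bubble-$0$ region yield a product of order $e^{-(n+2\gamma)L/4}$, the required $(1+\xi)$. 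The cutoff correction $(1-\chi_i)\phi_i$ lives on $\{1/2\le|x-p_i|\le 1\}$ where, from $\|\psi_i\|_{\mathcal C^{2\gamma+\alpha}}\le Ce^{-(n-2\gamma)L(1+\xi)/4}$ of Corollary \ref{pro202}, it is already of the desired order.

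For $\mathcal I_{\rm int}$, I would Taylor-expand around the locally dominant tower $U_{i_0}$, so that
\[
|\mathcal I_{\rm int}|\le C\,U_{i_0}^{\beta-1}\sum_{i\ne i_0}U_i\;+\;C\sum_{i\ne i_0}U_i^\beta.
\]
The fast decay \eqref{estimacion-lejos}, $U_i(x)\lesssim e^{-(n-2\gamma)L/4}|x-p_i|^{-(n-2\gamma)}$ for $|x-p_i|\ge 1$, makes each $U_i(x)$ with $i\ne i_0$ of size $e^{-(n-2\gamma)L/4}$ near $p_{i_0}$. Combined with the bubble-tower profile of $U_{i_0}^{\beta-1}$ and weighted by $\dist(x,\Sigma)^{(n+2\gamma)/2-\tau}$, a scale-by-scale check at $\dist(x,p_{i_0})\sim\lambda_j^{i_0}$ gives a weighted product $\lesssim (\lambda_j^{i_0})^{(n-2\gamma)/2-\tau}\,e^{-(n-2\gamma)L/4}\lesssim e^{-(n-2\gamma)L/2+O(\tau L)}$, sharpest at $j=0$, hence matching the required $(1+\xi)$. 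In the far field $|x|\ge 2$ each $U_i\lesssim e^{-(n-2\gamma)L/4}|x|^{-(n-2\gamma)}$, so $|\bar u^\beta|\lesssim e^{-(n+2\gamma)L/4}|x|^{-(n+2\gamma)}$, matching the weight $|x|^{n+2\gamma}$.

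I expect the main obstacle to lie in the non-local action of $(-\Delta)^\gamma$ on the cutoff $\chi_i$: the commutator $[(-\Delta)^\gamma,\chi_i]\phi_i$ appearing in $(-\Delta)^\gamma U_i$ couples the local Hölder data of $\phi_i$ with global contributions of its tail through the non-local kernel, and must still be controlled by $Ce^{-(n-2\gamma)L(1+\xi)/4}$ in the weighted H\"older norm uniformly in the choice of $\bar L=(L_1,\ldots,L_k)$. The plan is to split this commutator region by region: it vanishes inside $B_{1/2}(p_i)$; on the annulus $\{1/2\le|x-p_i|\le 1\}$ it inherits the smallness of $\|\psi_i\|_{\mathcal C^{2\gamma+\alpha}}\le Ce^{-(n-2\gamma)L(1+\xi)/4}$ from Corollary \ref{pro202}; outside $B_1(p_i)$ the polynomial decay of the kernel of $(-\Delta)^\gamma$ combined with the already-small tail of $\phi_i$ dominates. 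These region-wise bounds, together with the weighted $\mathcal C^{2\gamma+\alpha}$ mapping properties of $(-\Delta)^\gamma$ used in the proof of Proposition \ref{pro201}, then yield \eqref{estimate-S}.
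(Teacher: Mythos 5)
Your overall strategy---decompose $\bar u$ into single-point half-towers, compare each to a full Delaunay, Taylor-expand in the bubble-wise parameters, and treat the cross-tower interaction separately---is close in spirit to the paper's (the paper estimates $S(\bar u_0)$ at the zero-parameter configuration by comparing to the exact Delaunay $u_{L_i}$ near each $p_i$, then handles general parameters by differentiating $S(\bar u_0)$ in $r_j^i$, $a_j^i$ and multiplying by the parameter size); your explicit $\mathcal E_{\rm self}/\mathcal I_{\rm int}$ split is a reasonable reorganization. But there are genuine errors in the intermediate steps.

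The claim that ``$[(-\Delta)^\gamma,\chi_i]\phi_i$ vanishes inside $B_{1/2}(p_i)$'' is false, and it contradicts the very non-locality you flag as the central obstacle. For $x\in B_{1/2}(p_i)$ one has $\chi_i(x)=1$, so the commutator equals $(-\Delta)^\gamma\bigl((\chi_i-1)\phi_i\bigr)(x)=-c_{n,\gamma}'\int_{|y-p_i|\ge 1/2}\frac{(\chi_i(y)-1)\phi_i(y)}{|x-y|^{n+2\gamma}}\,dy$; the function $(\chi_i-1)\phi_i$ is supported off $B_{1/2}(p_i)$, but the non-local operator does not annihilate it at $x\in B_{1/2}(p_i)$. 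What saves you is that this integral is bounded by $\sup_{|y-p_i|\ge 1/2}|\phi_i(y)|\lesssim e^{-(n-2\gamma)L(1+\xi)/4}$ (with $|x-y|$ bounded below once $|x-p_i|\le 1/4$, say), so the correct statement is ``small by non-local smallness,'' not ``zero by locality.'' This is exactly what the paper's $I_2$-computation and the $O(e^{-(n-2\gamma)L(1+\xi)/4})$ remainder in the near-$p_1$ expansion deliver.

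Two further gaps: (a) ``$r_j^i=\tilde a_j^i=0$ makes $U_i^\flat$ an exact Delaunay'' forgets the base translation $\hat a_0^i$. Since $a_j^i=(\lambda_j^i)^2(\hat a_0^i+\tilde a_j^i)$ and $\hat a_0^i$ is generically $O(1)$ from \eqref{balance2}, one still has $a_j^i\neq 0$, a $j$-dependent shift that is not a global translation of a Delaunay, so $S(U_i^\flat)\neq 0$ at your base point. The paper's base case is $(a_j^i,r_j^i)=(0,0)$, and the Taylor step is done in $a_j^i$ directly (with relative size $|a_j^i|/\lambda_j^i\lesssim\lambda_j^i\sim e^{-t_j^i}$, even faster than $e^{-\tau t_j^i}$); you should do the same. (b) Your $\mathcal E_{\rm self}$ discussion is confined to $B_1(p_i)$, but the comparison $U_i-U_i^\flat=-\sum_{j<0}w_j^i-(1-\chi_i)\phi_i$ fails in the far field, where the subtracted $w_j^i$ ($j<0$) are of size $O(1)$ at scale $e^{|j|L_i/2}$, not exponentially small. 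The fix is straightforward: for $\dist(x,\Sigma)\ge 1$, $\chi_i=0$ and each $w_j^i$ solves the equation exactly, so $\mathcal E_{\rm self}=c_{n,\gamma}\sum_i\bigl[\sum_j(w_j^i)^\beta-(\sum_j w_j^i)^\beta\bigr]\lesssim e^{-(n+2\gamma)L/4}|x|^{-(n+2\gamma)}$, which sits inside the $\mathcal C_{**}$ weight.
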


\begin{proof}
As usual, for simplicity, we prove the estimates in (\ref{estimate-S}) for the $L^\infty$ norm, namely, we prove the following estimates:
\begin{equation}\label{estimate10}
|S(\bar{u})(x)|\leq C|x-p_i|^{\min\{\gamma_1,-\frac{n-2\gamma}{2}+\tau\}-2\gamma}e^{-\frac{(n-2\gamma)L}{4}(1+\xi)},
\end{equation}
near each singular point $p_i$ and
\begin{equation}\label{estimate11}
|S(\bar{u})(x)|\leq C|x|^{-(n+2\gamma)}e^{-\frac{(n-2\gamma)L}{4}(1+\xi)}.
\end{equation}
for $\dist(x,\Sigma)\geq 1$.

First we show the estimates for the particular case that all the parameters $a_j^i$, $r_j^i$ are zero. Let ${\bar u}_0$ be the approximate solution from \eqref{eq301} in this case.
Without loss of generality, assume $p_1=0$ and we consider in the region $\dist(x, \Sigma)\geq 1$.
In this region, $\chi_i=0$ for all $i$, one has
\begin{equation*}\label{estimate-S(u0)}\begin{split}
S(\bar{u}_0)&=(-\Delta)^\gamma \bar{u}_0-c_{n,\gamma}\bar{u}_0^{\beta}
=(-\Delta)^\gamma (\sum_{i=1}^k\sum_{j=0}^\infty w_j^i+\chi_i\phi_i)-c_{n,\gamma}(\sum_{i=1}^k\sum_{j=0}^\infty w_j^i)^\beta\\
&=-c_{n,\gamma}[(\sum_{i,j}w_j^i)^\beta-\sum_{i,j}(w_j^i)^\beta]+(-\Delta)^\gamma (\sum_{i=1}^k\chi_i\phi_i)=:I_1+I_2.
\end{split}\end{equation*}
First, using the fact that
\begin{equation*}
|w_j^i|\sim(\lambda_j^i)^{\frac{n-2\gamma}{2}}|x|^{-(n-2\gamma)},
\end{equation*}
and recalling the relation between $L$ and $L_i$ from \eqref{q_i} we have
\begin{equation*}
I_1\leq C(e^{-\frac{(n-2\gamma)L}{4}}|x|^{-(n-2\gamma)})^\beta\leq Ce^{-\frac{(n+2\gamma)L}{4}}|x|^{-(n+2\gamma)}.
\end{equation*}
For $I_2$, recall that by Corollary \ref{pro202} $$\phi_i=|x-p_i|^{-\frac{n-2\gamma}{2}}\psi_i=|x-p_i|^{-\frac{n-2\gamma}{2}}
O(e^{-\frac{(n-2\gamma)L_i}{4}(1+\xi)}),$$ we have for $|x|$ large,
\begin{equation*}\begin{split}
(-\Delta)^\gamma (\chi_i\phi_i)(x)&=P.V.\int_{\R^n}\frac{\chi_i(x)\phi_i(x)-\chi_i(y)\phi_i(y)}{|x-y|^{n+2\gamma}}\,dy
=P.V.\int_{B_1}\frac{-\chi_i(y)\phi_i(y)}{|x-y|^{n+2\gamma}}\,dy\\
&=|x|^{-(n+2\gamma)}O(e^{-\frac{(n-2\gamma)L_i}{4}(1+\xi)}).
\end{split}\end{equation*}
Thus one has for $\dist(x,\Sigma)\geq 1$,
\begin{equation*}
|S(\bar{u}_0)|\leq C|x|^{-(n+2\gamma)}e^{-\frac{(n-2\gamma)L}{4}(1+\xi)}.
\end{equation*}

Next, we consider the region $\frac{1}{2}\leq |x-p_i|\leq 1$. In this case, it is easy to check that
\begin{equation*}
|S(\bar{u}_0)|\leq Ce^{-\frac{(n-2\gamma)L}{4}(1+\xi)}.
\end{equation*}

Last we consider the region $|x|\leq \frac{1}{2}$. In this region we have $\chi_1=1$ and $\chi_i=0$ for $i\neq 1$, so
\begin{equation*}
\bar{u}_0=u_{L_1}-(1-\chi_1)\phi_1+\sum_{i\neq 1}\left(\sum_{j=0}^\infty w_j^i+\chi_i\phi_i\right)-\sum_{j=-\infty}^{-1} w_j^1.
\end{equation*}
Hence
\begin{equation*}\begin{split}
S(\bar{u}_0)
&=(-\Delta)^\gamma u_{L_1}-c_{n,\gamma}\Big(\sum_{j=0}^\infty w_j^1+\phi_1+O(e^{-\frac{(n-2\gamma)L}{4}})\Big)^\beta+O(e^{-\frac{(n-2\gamma)L}{4}(1+\xi)})\\
&=-c_{n,\gamma}\big[(u_{L_1}+O(e^{-\frac{(n-2\gamma)L}{4}}))^\beta-u_{L_1}^\beta\big]+O(e^{-\frac{(n-2\gamma)L}{4}(1+\xi)})\\
&\leq Cu_{L_1}^{\beta-1}e^{-\frac{(n-2\gamma)L}{4}}+O(e^{-\frac{(n-2\gamma)L}{4}(1+\xi)})\\
&\leq C|x|^{-2\gamma}\Big(\sum_{j=-\infty}^\infty v_j(-\log |x|)\Big)^{\beta-1}e^{-\frac{(n-2\gamma)L}{4}}+O(e^{-\frac{(n-2\gamma)L}{4}(1+\xi)})\\
&\leq C|x|^{\gamma_1-2\gamma}|x|^{-\gamma_1}\Big(\sum_{j=-\infty}^\infty v_j(-\log |x|)\Big)^{\beta-1}e^{-\frac{(n-2\gamma)L}{4}}+O(e^{-\frac{(n-2\gamma)L}{4}(1+\xi)})\\
&\leq C|x|^{\gamma_1-2\gamma}e^{-\frac{(n-2\gamma)L}{4}(1+\xi)},
\end{split}\end{equation*}
where for the last inequality we have used \eqref{estimate-near} below in the region $|x|\leq \frac{1}{2}$. We have also denoted
$$v_j(t):=v\big(t-\tfrac{L_1}{2}-jL_1\big), \quad\text{for}\quad t=-\log |x|.$$
In any case, for $t=-\log |x|<\frac{L_1}{4}$, we have
\begin{equation}\label{eq306}
\sum_{j=-\infty}^\infty v_j(-\log |x|)\leq Ce^{-\frac{(n-2\gamma)L_1}{8}},\ \ |x|\leq C,
\end{equation}
and for $t\geq \frac{L_1}{4}$, we have
\begin{equation}\label{eq307}
|x|\leq Ce^{-\frac{L_1}{4}},\ \ \sum_{j=-\infty}^\infty v_j(-\log |x|)\leq C.
\end{equation}
Combining the above two estimates, we have for $\gamma_1<0$,
\begin{equation}\label{estimate-near}
|x|^{-\gamma_1}\big(\sum_{j=-\infty}^\infty v_j(-\log |x|)\big)^{\beta-1}\leq Ce^{-\xi L_1}.
\end{equation}
So for $|x|\leq \frac{1}{2}$, one has
\begin{equation*}
|S(\bar{u}_0)|\leq C|x|^{\gamma_1-2\gamma}e^{-\frac{(n-2\gamma)L}{4}(1+\xi)}.
\end{equation*}
Thus we get  estimates \eqref{estimate10} and \eqref{estimate11} in this particular case.\\

Now we consider the case of a general configuration $r_j^i, a_j^i$. First we differentiate $S(\bar{u}_0)$ with respect to these parameters. Since the variation is linear in the displacements of the parameters, we vary the parameter of one point at one time. Varying $r_{j}^i$, we obtain
\begin{equation*}
\frac{\partial }{\partial r_{j}^i}S(\bar{u}_0)=(-\Delta)^\gamma \frac{\partial w_j^i}{\partial r_j^i}-c_{n,\gamma}\beta \bar{u}_0^{\beta-1}\frac{\partial w_j^i}{\partial r_j^i}
=\beta c_{n,\gamma}\big[(w_j^i)^{\beta-1}-\bar{u}_0^{\beta-1}\big]\frac{\partial w_j^i}{\partial r_j^i}.
\end{equation*}
From the estimate on $\phi_i$ and the condition on $r_j^i$, we have the following estimates:

For $\dist(x,\Sigma)\geq 1$,
\begin{equation*}\begin{split}
[(w_j^i)^{\beta-1}-\bar{u}_0^{\beta-1}]\frac{\partial w_j^i}{\partial r_j^i}&\leq C(e^{-\frac{(n-2\gamma)L}{4}}|x|^{-(n-2\gamma)})^{\beta-1}
e^{-\frac{(n-2\gamma)}{4}L(2j+1)}|x|^{-(n-2\gamma)}\\
&\leq C|x|^{-(n+2\gamma)}e^{-\frac{(n-2\gamma)L}{4}(1+\xi)}e^{-\sigma t_j^i},
\end{split}\end{equation*}
for a suitable choice of $\sigma>0$.
Next, when $|x-p_i|\leq 1$ for $i\neq 1$, for instance, similar to the estimates \eqref{eq306} and \eqref{eq307}, one has
\begin{equation*}\begin{split}
[(w_j^i)^{\beta-1}-\bar{u}_0^{\beta-1}]\frac{\partial w_j^i}{\partial r_j^i}&
\leq C|x-p_i|^{-2\gamma}(\sum_{j=0}^\infty v_j(-\log |x-p_i|))^{\beta-1}e^{-\frac{(n-2\gamma)L}{4}(2j+1)}\\
&\leq C|x-p_i|^{\gamma_1-2\gamma}e^{-\frac{(n-2\gamma)L}{4}(1+\xi)}e^{-\sigma t_j^i},
\end{split}\end{equation*}
while for $|x-p_1|\leq 1$, if $|t-t^i_j|\leq \frac{L_1}{2}$ it is true that
\begin{equation*}\begin{split}
[(w_j^i)^{\beta-1}-\bar{u}_0^{\beta-1}]\frac{\partial w_j^i}{\partial r_j^i}&\leq C (w_j^i)^{\beta-1}\big[\sum_{l\neq j}w_l^i+e^{-\frac{(n-2\gamma)L}{4}}\big]\\
&\leq C\big[|x|^{-\frac{n+2\gamma}{2}}\sum_{l\neq j}v_j^{\beta-1}v_l+|x|^{-2\gamma}v_j^{\beta-1}e^{-\frac{(n-2\gamma)L}{4}}\big]\\
&\leq C\big[|x|^{-\frac{n+2\gamma}{2}}e^{-\eta|t-t_j|}\sum_{l\neq j}e^{-(2\gamma-\eta)|t-t_j^i|}e^{-\frac{n-2\gamma}{2}|t-t^i_l|}\\
&\qquad+|x|^{\gamma_1-2\gamma}|x|^{\gamma_1}e^{-2\gamma|t-t_j^i|}e^{-\frac{(n-2\gamma)L}{4}}\big]\\
&\leq C\big[|x|^{-\frac{n+2\gamma}{2}}e^{-\eta|t-t_j^i|}+
|x|^{\gamma_1-2\gamma}e^{-\sigma t_j^i}\big]e^{-\frac{(n-2\gamma)L}{4}(1+\xi)},
\end{split}\end{equation*}
if we choose $0<\eta<2\gamma$. On the other hand, if $|t-t^i_l|\leq \frac{L_1}{2}$ for some $l\neq j$, one has
\begin{equation*}\begin{split}
[(w_j^i)^{\beta-1}-\bar{u}_0^{\beta-1}]\frac{\partial w_j^i}{\partial r_j^i}&\leq C (w_l^i)^{\beta-1}\frac{\partial w_j^i}{\partial r_j^i}\\
&\leq C|x|^{-\frac{n+2\gamma}{2}}v_l^{\beta-1}v_j\\
&\leq C|x|^{-\frac{n+2\gamma}{2}}e^{-\eta|t-t^i_j|}e^{\eta|t-t^i_j|}
e^{-\frac{n-2\gamma}{2}|t-t_j|}e^{-2\gamma|t-t^i_l|}\\
&\leq C|x|^{-\frac{n+2\gamma}{2}}e^{-\eta|t-t^i_j|} e^{-\frac{(n-2\gamma)L}{4}(1+\xi)},
\end{split}\end{equation*}
if $\eta<\frac{n-2\gamma}{2} $ which is chosen small enough.
Combining the above two estimates yields, for $|x|\leq 1$,
\begin{equation*}
[(w_j^i)^{\beta-1}-\bar{u}_0^{\beta-1}]\frac{\partial w_j^i}{\partial r_j^i}|r_j^i|\leq C|x|^{-\frac{n+2\gamma}{2}}e^{-\tau t}e^{-\frac{(n-2\gamma)L}{4}(1+\xi)}+|x|^{\gamma_1-2\gamma}e^{-\frac{(n-2\gamma)
L}{4}(1+\xi)}.
\end{equation*}
Moreover,  recalling \eqref{para1}, one can get that for $\dist(x,\Sigma)\geq1$,
\begin{eqnarray*}
[(w_j^i)^{\beta-1}-\bar{u}_0^{\beta-1}]\frac{\partial w_j^i}{\partial r_j^i}|r_j^i|
\leq C|x|^{-(n+2\gamma)}e^{-\frac{(n-2\gamma)L}{4}(1+\xi)}e^{-\sigma t_j^i},
\end{eqnarray*}
and for $\dist(x,\Sigma)\leq 1$,
\begin{eqnarray*}
&&[(w_j^i)^{\beta-1}-\bar{u}_0^{\beta-1}]\frac{\partial w_j^i}{\partial r_j^i}|r_j^i|\\
&&\qquad\leq C[\dist(x,\Sigma)^{-\frac{n+2\gamma}{2}}\dist(x,\Sigma)^{\tau}
+\dist(x,\Sigma)^{\gamma_1-2\gamma}]
e^{-\frac{(n-2\gamma)L}{4}(1+\xi)}\\
&&\qquad\leq C\dist(x,\Sigma)^{\min\{-\frac{n-2\gamma}{2}+\tau, \gamma_1\}-2\gamma}e^{-\frac{(n-2\gamma)L}{4}(1+\xi)}
\end{eqnarray*}
for some $-\frac{n-2\gamma}{2}<\gamma_1<\min\{0,-\frac{n-2\gamma}{2}+2\gamma\}$ and $\tau$ small enough.

Similar estimates hold for $\frac{\partial }{\partial a_{j,l}^i}S(\bar{u})$. We conclude from the above that
\begin{equation*}\begin{split}
|S(\bar{u})-S(\bar{u}_0)|&\leq \sum_{i=1}^k\sum_{j=0}^\infty\sum_{l=1}^n \Big|[(w_j^i)^{\beta-1}-\bar{u}_0^{\beta-1}]\frac{\partial w_j^i}{\partial r_j^i}|r_j^i|
+[(w_j^i)^{\beta-1}-\bar{u}_0^{\beta-1}]\frac{\partial w_j^i}{\partial a_{j,l}^i}|a_{j,l}^i|\Big|\\
&\leq \left\{\begin{array}{l}
C|x|^{-(n+2\gamma)}e^{-\frac{(n-2\gamma)L}{4}(1+\xi)}, \ \text{ if }\dist(x,\Sigma)\geq 1,\\
C\dist(x,\Sigma)^{\min\{\gamma_1,-\frac{n-2\gamma}{2}+\tau\}-2\gamma}e^{-\frac{(n-2\gamma)L}{4}(1+\xi)}, \ \text{ if }\dist(x,\Sigma)< 1.
\end{array}
\right.
\end{split}\end{equation*}
Thus we have
\begin{equation*}
\|S(\bar{u})\|_{**}\leq Ce^{-\frac{(n-2\gamma)L}{4}(1+\xi)},
\end{equation*}
as desired.

\end{proof}

\noindent{\bf Proof of Lemma \ref{lemma301}.}
The proof relies on a standard finite-dimensional Lyapunov-Schmidt reduction.

\noindent \emph{Step 1: Preliminary calculations.} Multiply equation (\ref{eq302}) by ${Z}_{j',l'}^{i'}$ and integrate over $\R^n$; we have
\begin{equation}\label{Toeplitz}
\int_{\R^n}[(-\Delta)^\gamma \phi-c_{n,\gamma}\beta\bar{u}^{\beta-1}\phi]{Z}_{j',l'}^{i'}\,dx
=\int_{\R^n}h{Z}_{j',l'}^{i'}\,dx
+\sum_{i,j,l}c_{j,l}^i\int_{\R^n}(w_j^i)^{\beta-1}Z_{j,l}^i{Z}_{j',l'}^{i'}\,dx.
\end{equation}

By the orthogonality condition satisfied by $\phi$, we have that the left hand side of \eqref{Toeplitz} is
\begin{equation}\label{ecuacion11}\begin{split}
\int_{\R^n}[(-\Delta)^\gamma& \phi-c_{n,\gamma}\beta\bar{u}^{\beta-1}\phi]{Z}_{j',l'}^{i'}\,dx
=c_{n,\gamma}\beta\int_{\R^n}[(w_{j'}^{i'})^{\beta-1}-\bar{u}^{\beta-1}]\phi {Z}_{j',l'}^{i'}\,dx\\
&=\Big[\int_{B(p_{i'},1)}+\sum_{i\neq i'}\int_{B(p_i,1)}+\int_{\R^n \backslash \ \cup_{i=1}^k B(p_i,1)}\Big]=:I_1+I_2+I_3.
\end{split}\end{equation}
Without loss of generality, assume that $i'=1$ and $p_1=0$.  First we consider the case $l'=0$. Recalling the estimates for $Z_{j',0}^{i'}$ from \eqref{Z_0},
\begin{equation}\label{ecuacion12}\begin{split}
I_1&=\int_{B_1}[(w_{j'}^{i'})^{\beta-1}-\bar{u}^{\beta-1}]\phi {Z}_{j',l'}^{i'}\,dx\leq \|\phi\|_*\int_{B_1} \big|(w_{j'}^{i'})^{\beta-1}-\bar{u}^{\beta-1}\big| \, |x|^{\gamma_1}Z_{j',l'}^{i'}\,dx\\
&\leq \|\phi\|_*\int_{B_1}|x|^{\gamma_1-\frac{n+2\gamma}{2}}v_{j'}^{\beta-1}\sum_{j\neq j'}v_j\,dx\leq C\|\phi\|_*\int_0^\infty e^{-(\gamma_1+\frac{n-2\gamma}{2})t}v_{j'}^{\beta-1}\sum_{j\neq j'}v_j\,dt\\
&\leq C\|\phi\|_* \,e^{-\frac{(n-2\gamma)L}{4}(1+\xi)}e^{-(\gamma_1+\frac{n-2\gamma}{2})t^i_{j'}},
\end{split}\end{equation}
and notice that $\gamma_1>-\frac{n-2\gamma}{2}$. Next,
\begin{equation*}\begin{split}
I_2&=\sum_{i\neq 1}\int_{B(p_i,1)}[(w_{j'}^{i'})^{\beta-1}-\bar{u}^{\beta-1}]\phi Z_{j',l'}^{i'}\,dx\\
&\leq C\|\phi\|_*\sum_{i\neq 1}\int_{B(p_i,1)}\big|(w_{j'}^{i'})^{\beta-1}-\bar{u}^{\beta-1}\big|\, Z_{j',l'}^{i'}|x-p_i|^{\gamma_1}\,dx\\
&\leq C\|\phi\|_*\sum_{i\neq 1}\int_{B(p_i,1)}|x-p_i|^{\gamma_1-2\gamma}(\lambda_{j'}^{i'})^\frac{n-2\gamma}{2}(\sum_j v_j^i)\,dx\\
&\leq C\|\phi\|_*\,(\lambda_{j'}^{i'})^\frac{n-2\gamma}{2}e^{-(n+\gamma_1-2\gamma)\frac{L}{2}}\\
&\leq C\|\phi\|_*\,e^{\gamma_1t_{j'}-\frac{n-2\gamma}{2}L-\frac{\gamma_1}{2}L}
e^{-(\gamma_1+\frac{n-2\gamma}{2})t^i_{j'}}\\
&\leq C\|\phi\|_*\,e^{-\frac{(n-2\gamma)L}{4}(1+\xi)}e^{-(\gamma_1+\frac{n-2\gamma}{2})t^i_{j'}}
\end{split}\end{equation*}
and
\begin{equation*}\begin{split}
I_3&=\int_{\R^n / \cup_i B(p_i,1)}[(w_{j'}^{i'})^{\beta-1}-\bar{u}^{\beta-1}]\phi Z_{j',l'}^{i'}\,dx\\
&\leq C\|\phi\|_*\int_{\R^n \backslash \cup_iB(p_i,1)}|x|^{-(n-2\gamma)}|x|^{-(n+2\gamma)}
(\lambda_{j'}^{i'})^{\frac{n-2\gamma}{2}}e^{-\gamma L}\,dx\\
&\leq C\|\phi\|_*\,e^{\gamma_1t_{j'}-\gamma L}e^{-(\gamma_1+\frac{n-2\gamma}{2})t^i_{j'}}\\
&\leq C\|\phi\|_*\,e^{-\frac{(n-2\gamma)L}{4}(1+\xi)}e^{-(\gamma_1+\frac{n-2\gamma}{2})t^i_{j'}},
\end{split}\end{equation*}
where we have used $-\frac{n-2\gamma}{2}<\gamma_1<-\frac{n-2\gamma}{2}+2\gamma$.

On the other hand, for $l'=1,\cdots,n$, recalling from \eqref{Z_l} that $Z_{j',l'}^{i'}=O(|x-p_i|^{-\frac{n-2\gamma}{2}}(v_{j'}^{i'})^{1+\frac{2}{n-2\gamma}})$, then one can get similar estimates as above. In conclusion, one has
\begin{equation*}
\int_{\R^n}[(-\Delta)^\gamma \phi-c_{n,\gamma}\beta\bar{u}^{\beta-1}\phi]Z_{j',l'}^{i'}\,dx\leq C\|\phi\|_*e^{-\frac{(n-2\gamma)L}{4}(1+\xi)}e^{-(\gamma_1+\frac{n-2\gamma}{2}) t^i_{j'}},
\end{equation*}
for every $l'=0,\ldots,n$, which gives a good control of the left hand side of \eqref{Toeplitz}.

Now, for the first term in the right hand side of \eqref{Toeplitz},
\begin{equation*}\begin{split}
\int_{\R^n}hZ_{j',l'}^{i'}\,dx&\leq C\int_{\R^n \backslash \cup_i B(p_i)}\|h\|_{**}\,|x|^{-(n+2\gamma)}
|x|^{-(n-2\gamma)}e^{-\frac{(n-2\gamma)}{2}t^i_{j'}}\,dx\\
&+\int_{B(p_{i'})}\|h\|_{**}\,|x-p_{i'}|^{\gamma_1-2\gamma}|x-p_{i'}|^{-\frac{n-2\gamma}{2}}
[e^{-\frac{(n-2\gamma)}{2}t^i_{j'}}+e^{-(\frac{n-2\gamma}{2}+1)t^i_{j'}}]\,dx\\
&+\sum_{i\neq i'}\int_{B(p_i)}\|h\|_{**}\,|x-p_i|^{\gamma_1-2\gamma}e^{-\frac{n-2\gamma}{2}t^i_{j'}}\,dx\\
&\leq C\|h\|_{**}\,e^{-(\gamma_1+\frac{n-2\gamma}{2}) t^i_{j'}}.
\end{split}\end{equation*}

The next step is to isolate the term $c_{j,l}^i$  in \eqref{Toeplitz}, by inverting the matrix $\int_{\mathbb R^n} (w_j^i)^{\beta-1}Z_{j,l}^i Z_{j',l'}^{i'}\,dx.$ For this, recall the orthogonality estimates from \eqref{orthogonality1}-\eqref{orthogonality2}, which yield, for all $l=0,\ldots,n$,
\begin{equation*}
\int_{\R^n}(w_j^i)^{\beta-1}Z_{j,l}^iZ_{j,l'}^{i}\,dx=C_0\delta_{l,l'}   \quad \text{and}\quad \int_{\R^n}(w_j^i)^{\beta-1}Z_{j,l}^iZ_{j',l'}^{i}\,dx=O(e^{-\frac{n-2\gamma}{2}
|t^i_j-t^i_{j'}|})\ \text{if}
\ j\neq j',
\end{equation*}
plus a tiny error.
Then using Lemma A.6 in \cite{m} for the inversion of a Toepliz-type operator, one has from \eqref{Toeplitz} that
\begin{equation*}\begin{split}
|c_{j,l}^i|&\leq C[e^{-\frac{(n-2\gamma)L}{4}(1+\xi)}\|\phi\|_*
+\|h\|_{**}]e^{-(\gamma_1+\frac{n-2\gamma}{2})t_j^i}\\
&+C\sum_{j'\neq j}[e^{-\frac{(n-2\gamma)L}{4}(1+\xi)}\|\phi\|_*
+\|h\|_{**}]e^{-\frac{n-2\gamma}{2}(1+o(1))|t_j-t_{j'}|}e^{-(\gamma_1+\frac{n-2\gamma}{2})t_{j'}^i}\\
&\leq C[e^{-\frac{(n-2\gamma)L}{4}(1+\xi)}\|\phi\|_*+\|h\|_{**}]
e^{-(\gamma_1+\frac{n-2\gamma}{2})t_j^i}.
\end{split}\end{equation*}
From the estimates for $Z_{j,l}^i$ from \eqref{Z_0}-\eqref{Z_l} and the previous bound for $c_{j,l}^i$ one can check that in $B_1(p_i)$,
\begin{equation*}\begin{split}
|c_{j,l}^i(w_j^i)^{\beta-1}Z_{j,l}^i|&\leq C|c_{j,l}^i|\cdot|x-p_i|^{-\frac{n+2\gamma}{2}}e^{-\frac{n+2\gamma}{2}|t^i-t_j^i|}\\
&\leq C |x-p_i|^{\gamma_1-2\gamma}[e^{-\frac{(n-2\gamma)L}{4}(1+\xi)}\|\phi\|_*+\|h\|_{**}]\\
&\quad\cdot\, e^{-(\gamma_1+\frac{n-2\gamma}{2})t^i_j}
e^{(\gamma_1+\frac{n-2\gamma}{2})t^i}e^{-\frac{n+2\gamma}{2}|t^i-t^i_j|}\\
&\leq C|x-p_i|^{\gamma_1-2\gamma}[e^{-\frac{(n-2\gamma)L}{4}(1+\xi)}\|\phi\|_*+\|h\|_{**}]
e^{-\sigma|t^i-t^i_j|}
\end{split}\end{equation*}
for some $\sigma>0$.

For $x\in \R^n \backslash\cup_i B_1(p_i)$, one has
\begin{equation*}\begin{split}
|c_{j,l}^i(w_j^i)^{\beta-1}Z_{j,l}^i|&\leq C(\lambda_j^i)^{\frac{n-2\gamma}{2}}|x|^{-(n+2\gamma)}|c_{j,l}^i|\\
&\leq C|x|^{-(n+2\gamma)}[e^{-\frac{(n-2\gamma)L}{4}(1+\xi)}\|\phi\|_*+\|h\|_{**}]e^{-\sigma t_j^i}.
\end{split}\end{equation*}
Combining the above two estimates yields
\begin{equation}\label{eq308}
\big\|\sum c_{j,l}^i(w_j^i)^{\beta-1}Z_{j,l}^i\big\|_{**}\leq C\big[e^{-\frac{(n-2\gamma)L}{4}(1+\xi)}\|\phi\|_*+\|h\|_{**}\big].
\end{equation}

\medskip

\noindent\emph{Step 2: A priori estimates.} We are going to prove the a priori estimate \eqref{eq305} by a contradiction argument. First let us recall the problem we are going to consider:
\begin{equation}\label{ecuacion10}
\left\{\begin{split}
&(-\Delta)^\gamma \phi-c_{n,\gamma}\beta \bar{u}^{\beta-1}\phi=\bar{h},\\
&\int_{\R^n}\phi (w_j^i)^{\beta-1}Z_{j,l}^i\,dx=0,\quad i=1,\ldots,k, \ j=0,1,\ldots, \ l=0,\ldots,n,
\end{split}
\right.
\end{equation}
where have we denoted $\bar{h}:=h+\sum c_{j,l}^i(w_j^i)^{\beta-1}Z_{j,l}^i$, and which satisfies, by (\ref{eq308}), that
\begin{equation*}
\|\bar{h}\|_{**}\leq C(\|h\|_{**}+o(1)\|\phi\|_{*}).
\end{equation*}
We are going to prove that
\begin{equation}\label{objective}
\|\phi\|_*\leq C\|\bar{h}\|_{**},
\end{equation}
for where \eqref{eq305} follows immediately.

Assume that there exist sequences $\{L_i^{(n)}\}$ with $L_i^{(n)}\to \infty$, $\{r_j^{i,(n)}, \ a_j^{i,(n)}\}$, $\{h^{(n)}\}$, $\{c_{j,l}^{i,(n)}\}$ and the corresponding solution $\{\phi^{(n)}\}$ such that
\begin{equation}\label{eq311}
\|h^{(n)}\|_{**}\to 0, \quad \|\phi^{(n)}\|_*=1.
\end{equation}
In the following we will drop the index $n$ if needed. First by the Green's representation formula for the first equation in \eqref{ecuacion10} we have
\begin{equation}\label{Green-representation}
\phi(x)=\int_{\R^n}(c_{n,\gamma}\bar{u}^{\beta-1}\phi+\bar{h})(y)G(x,y)\,dy=:I_1+I_2,
\end{equation}
where $G$ is the Green's function for the fractional Laplacian $(-\Delta)^\gamma$, given by (\cite{Caffarelli-Silvestre})
\begin{equation*}
G(x,y)=C|x-y|^{-(n-2\gamma)}.
\end{equation*}
First we consider the region $\{\dist(x,\Sigma)\geq 1\}$. Here, for $I_2$,
\begin{equation*}\begin{split}
I_2&=\int_{\R^n}\bar{h}(y)G(x,y)\,dy\\
&\leq \left[\int_{\{\dist(y,\Sigma)\leq 1\}}+\int_{\{1<\dist(y,\Sigma)<\frac{|x|}{2}\}}+\int_{\{\frac{|x|}{2}<\dist(y,\Sigma)<2|x|\}}+
\int_{\{\dist(y,\Sigma)\geq 2|x|\}}\right]\bar{h}(y)G(x,y)\,dy\\
&=:I_{21}+I_{22}+I_{23}+I_{24},
\end{split}\end{equation*}
one has
\begin{equation*}\begin{split}
I_{21}&\leq \int_{\{\dist(y,\Sigma)<1\}}\frac{1}{|x-y|^{n-2\gamma}}
\|\bar{h}\|_{**}|y|^{\gamma_1-2\gamma}\,dy
\leq C\|\bar{h}\|_{**}|x|^{-(n-2\gamma)},\\
I_{22}
&\leq \int_{\{1<\dist(y,\Sigma)<\frac{|x|}{2}\}}\frac{1}{|x-y|^{n-2\gamma}}\|\bar{h}\|_{**}
|y|^{-(n+2\gamma)}\,dy\\
&\leq C\|\bar{h}\|_{**}|x|^{-(n-2\gamma)}\int_{\{1<\dist(y,\Sigma)<\frac{|x|}{2}\}}
|y|^{-(n+2\gamma)}\,dy\leq C\|\bar{h}\|_{**}|x|^{-(n-2\gamma)},\\
I_{23}&\leq \int_{\frac{|x|}{2}<\dist(y,\Sigma)<2|x|}\frac{1}{|x-y|^{n-2\gamma}}\|\bar{h}\|_{**}
|y|^{-(n+2\gamma)}\,dy\\
&\leq \|\bar{h}\|_{**}|x|^{-(n+2\gamma)}\int_{\{x-y|<\frac{5|x|}{2}\}}
\frac{1}{|x-y|^{n-2\gamma}}\,dy\leq C\|\bar{h}\|_{**}|x|^{-(n-2\gamma)},\\
I_{24}&\leq \int_{\{\dist(y,\Sigma)\geq 2|x|\}}\frac{1}{|x-y|^{n-2\gamma}}\|\bar{h}\|_{**}|y|^{-(n+2\gamma)}\,dy\leq C\|\bar{h}\|_{**}|x|^{-(n-2\gamma)}.
\end{split}\end{equation*}
Putting all together,
\begin{equation}\label{eq309}
I_2\leq C\|\bar{h}\|_{**}|x|^{-(n-2\gamma)}.
\end{equation}
Next for $I_1$,
\begin{equation*}\begin{split}
I_1&=\left[\int_{\{\dist(y,\Sigma)\leq 1\}}+\int_{\{\dist(y,\Sigma)\geq 1\}}\right]c_{n,\gamma}\beta\bar{u}^{\beta-1}\phi\, G(x,y)\,dy=:I_{11}+I_{12}.
\end{split}\end{equation*}
Since for $\dist(y,\Sigma)\geq 1$ it holds that $\bar{u}=O(e^{-\frac{n-2\gamma}{4}L})|y|^{-(n-2\gamma)}$ (recall \eqref{estimacion-lejos}), then
\begin{eqnarray*}
I_{12}\leq Ce^{-\gamma L}\|\phi\|_{*}\int_{\{\dist(y,\Sigma)\geq 1\}} |y|^{-(n+2\gamma)}G(x,y)\,dy,
\end{eqnarray*}
and similar to the estimate above we get that
\begin{equation*}
I_{12}\leq o(1)\|\phi\|_*|x|^{-(n-2\gamma)}.
\end{equation*}
Moreover,
\begin{equation*}\begin{split}
I_{11}&\leq \sum_{i=1}^k\int_{\{|y-p_i|\leq 1\}}|y-p_i|^{-2\gamma}(\sum_{j=0}^\infty v_j^i)^{\beta-1}\|\phi\|_*|y-p_i|^{\gamma_1}|x-y|^{-(n-2\gamma)}\,dy\\
&\leq C\|\phi\|_* |x|^{-(n-2\gamma)}\int_{\{|y-p_i|<1\}}|y-p_i|^{\gamma_1-2\gamma}(
\sum_{j=0}^\infty v_j^i)^{\beta-1}\,dy\\
&\leq C\|\phi\|_* |x|^{-(n-2\gamma)}\int_0^\infty e^{-(n+\gamma_1-2\gamma)t}(\sum_{j=0}^\infty v_j^i)^{\beta-1}\,dt\\
&\leq C e^{-(n+\gamma_1-2\gamma)\frac{L}{2}}\|\phi\|_* |x|^{-(n-2\gamma)}.
\end{split}\end{equation*}
Since $\gamma_1>-\frac{n-2\gamma}{2}$, by the above estimates one has
\begin{equation}\label{eq310}
I_1\leq o(1)\|\phi\|_*|x|^{-(n-2\gamma)}.
\end{equation}
Summarizing, from (\ref{eq309}) and (\ref{eq310}) we obtain that, for $\dist(x,\Sigma)\geq 1$,
\begin{equation*}
\sup_{\dist(x,\Sigma)\geq 1}\left\{|x|^{n-2\gamma}|\phi(x)|\right\}\leq C(\|\bar{h}\|_{**}+o(1)\|\phi\|_*)\to 0 \quad \text{as}\quad L\to\infty,
\end{equation*}
by our initial hypothesis \eqref{eq311}. Moreover, because of the same reason,
 we know that there exists $p_i$ such that
\begin{equation}\label{eq312}
\sup_{|x-p_i|\leq 1}\left\{|x-p_i|^{-\gamma_1}\phi(x)|\right\}\geq \tfrac{1}{2}.
\end{equation}

The next step is to consider the region $\{|x-p_i|\leq 1\}$. In order to simplify the notation, we assume that $p_i=0, \ |x|<1$. Again, we use Green's representation formula \eqref{Green-representation}, and we estimate both integrals $I_1$, $I_2$. On the one hand,
\begin{equation*}\begin{split}
I_2&=\left[\int_{\{|y|\geq 1\}}+\int_{\{|y|<\frac{|x|}{2}\}}
+\int_{\{\frac{|x|}{2}<|y|<2|x|\}}
+\int_{\{2|x|<|y|<1\}}\right]G(x,y)\bar{h}\,dy=:I_{21}+I_{22}+I_{23}+I_{24},
\end{split}\end{equation*}
where
\begin{equation*}\begin{split}
I_{21}&\leq C\int_{\{|y|\geq 1\}}\frac{1}{|x-y|^{n-2\gamma}}\|\bar{h}\|_{**}|y|^{-(n+2\gamma)}\,dy\leq C\|\bar{h}\|_{**}|x|^{\gamma_1},\\
I_{22}&\leq \int_{\{|y|<\frac{|x|}{2}\}}\frac{1}{|x-y|^{n-2\gamma}}
\|\bar{h}\|_{**}|y|^{\gamma_1-2\gamma}\,dy
\leq C\|\bar{h}\|_{**}\int_{\{|y|<\frac{|x|}{2}\}}\frac{1}{|x|^{n-2\gamma}}
|y|^{\gamma_1-2\gamma}\,dy\leq C\|\bar{h}\|_{**}|x|^{\gamma_1},\\
I_{23}&\leq \int_{\{\frac{|x|}{2}<|y|<2|x|\}}\frac{1}{|x-y|^{n-2\gamma}}
\|\bar{h}\|_{**}|y|^{\gamma_1-2\gamma}\,dy
\\
&\leq C\|\bar{h}\|_{**}|x|^{\gamma_1-2\gamma}\int_{\{|x-y|<\frac{5|x|}{2}\}}\frac{1}{
|x-y|^{n-2\gamma}}\,dy\leq C\|\bar{h}\|_{**}|x|^{\gamma_1},\\
I_{24}&\leq \int_{\{2|x|<|y|<2\}}\frac{1}{|x-y|^{n-2\gamma}}\|\bar{h}\|_{**}|y|^{\gamma_1-2\gamma}\,dy
\leq C\|\bar{h}\|_{**}\int_{\{2|x|<|y|<2\}}|y|^{\gamma_1-2\gamma-n+2\gamma}\,dy\leq C\|\bar{h}\|_{**}|x|^{\gamma_1}.
\end{split}\end{equation*}
Thus one has
\begin{equation*}
I_2\leq C\|\bar{h}\|_{**}|x|^{\gamma_1}.
\end{equation*}
On the other hand, for $I_1$,
\begin{equation*}\begin{split}
I_1&=\left[\int_{\{|y|>1\}}+\int_{\{|y|<1\}}\right]c_{n,\gamma}\beta\bar{u}^{\beta-1}\phi\, G(x,y)\,dy=:I_{11}+I_{12}.
\end{split}\end{equation*}
Similar to the estimates for $\bar{h}$,
\begin{equation*}\begin{split}
I_{11}&\leq C\int_{\{\dist(y,\Sigma)\geq 1\}}e^{-\gamma L}|y|^{-4\gamma}\frac{1}{|x-y|^{n-2\gamma}}\|\phi\|_*|y|^{-(n-2\gamma)}\,dy\leq o(1)\|\phi\|_*|x|^{\gamma_1}.
\end{split}\end{equation*}

The final step is to estimate $I_{12}$. For this we consider $\phi$ in the region $A_j:=\sqrt{\lambda_{j+1}^i\lambda_j^i}<|x|<\sqrt{\lambda_j^i\lambda_{j-1}^i}$, and define a scaled function $\tilde{\phi}_j(\tilde{x})=(\lambda_j^i)^{-\gamma_1}\phi(\lambda_j^i \tilde x)$ defined in the region $\tilde{A}_j=\frac{A_j}{\lambda_j^i}\to (0,\infty)$ as $n\to\infty$. Then $\tilde{\phi}_j$ will satisfy the following equation
\begin{equation*}\left\{\begin{split}
&(-\Delta)^\gamma \tilde{\phi}_j-c_{n,\gamma}\beta \Big(\frac{1}{1+|\tilde{x}|^2}\Big)^{2\gamma}(1+o(1))\tilde{\phi}_j=
(\lambda_j^i)^{2\gamma-\gamma_1}\bar{h}(\lambda_j^i \tilde{x})\quad\text{in }\tilde{A}_j,\\
&\int_{\R^n} \tilde{\phi}_j (w_j^i)^{\beta-1}(\lambda_j^i \tilde{x})Z_{j,l}^i(\lambda_j^i \tilde{x})\,d\tilde{x}=0.
\end{split}
\right.
\end{equation*}
Since $|\bar{h}|\leq C\|\bar{h}\|_{**}|\lambda_j^i \tilde{x}|^{\gamma_1-2\gamma}$ as $n \to \infty$, $\tilde{\phi}_j\to \bar{\phi}$ in any compact set $\frac{1}{R}\leq |\tilde{x}|\leq R$ for $R$ large enough (to be determined later), where $\bar{\phi} $ is a solution of the following equation
\begin{equation*}
\left\{\begin{split}
&(-\Delta)^\gamma \bar{\phi}-c_{n,\gamma}\beta w^{\beta-1}\bar{\phi}=0,\\
&\int_{\R^n} \bar{\phi}w^{\beta-1}Z_l\,dx=0,
\end{split}
\right.
\end{equation*}
where $w$ is the standard bubble solution and $Z_l, \ l=0,\cdots,n$, are the corresponding kernels mentioned in Lemma \ref{lemma:non-degenerate}. By the non-degeneracy of the bubble, one has $\bar{\phi}=0$, i.e. $\tilde{\phi}_j\to 0$ in $\frac{1}{R}<|\tilde{x}|<R$. If we consider the original $\phi$, this is equivalent to that $|x|^{-\gamma_1}\phi(x)\to 0$ in $\cup_j \{\frac{\lambda_j^i}{R}<|x|<R\lambda_j^i\}$ as $n\to \infty$. Using this result, we now consider $I_{12}$:
\begin{equation*}\begin{split}
I_{12}&=\int_{\{|y|<1\}}c_{n,\gamma}\beta \bar{u}^{\beta-1}\phi \,G(x,y)\,dy\\
&\leq \sum_j\left[\int_{\{\frac{\lambda_j^i}{R}<|x|<R\lambda_j^i\}}+\int_{\{|y|<1\} \backslash\cup_j\{\frac{\lambda_j^i}{R}<|x|<\lambda_j^i R\}}\right]\bar{u}^{\beta-1}\phi\, G(x,y)\,dy=:I_{121}+I_{122}.
\end{split}\end{equation*}
Recalling \eqref{eq301}, we have that in $\{|y|<1\}$,  $\bar{u}=|y|^{-\frac{n-2\gamma}{2}}(\sum_{j=0}^\infty v_j^i)(1+o(1))$. Then in the region $\{|y|<1\} \backslash \cup_j\{\frac{\lambda_j^i}{R}<|x|<\lambda_j^i R\}$, one has $\sum_j v_j^i\leq Ce^{-\frac{(n-2\gamma)}{2}R}$ which can be small enough choosing $R$ large enough but independent of $n$. Using this estimate we can assert that
\begin{equation*}\begin{split}
I_{122}&\leq Ce^{-2R}\int_{|y|<1}|y|^{-2\gamma}\|\phi\|_*|y|^{\gamma_1}
\frac{1}{|x-y|^{n-2\gamma}}\,dy\leq Ce^{-2R}|x|^{\gamma_1}.
\end{split}\end{equation*}
In addition, by the previous argument we know that $|x|^{-\gamma_1}\phi(x)\to 0 $ in $\cup_j \{\frac{\lambda_j^i}{R}<|x|<R\lambda_j^i\}$, and one has
\begin{equation*}\begin{split}
I_{121}&\leq C\sum_j\int_{\{\frac{\lambda_j^i}{R}<|y|<R\lambda_j^i\}} |\phi||y|^{-\gamma_1}\frac{|y|^{\gamma_1-2\gamma}(\sum_j v_j^i)^{\beta-1}}{|x-y|^{n-2\gamma}}\,dy\\
&\leq o(1)\int_{\{|y|\leq 1\}}\frac{|y|^{\gamma_1-2\gamma}}{|x-y|^{n-2\gamma}}\,dy\leq o(1)|x|^{\gamma_1}.
\end{split}\end{equation*}
Combining all the above estimates yields that in the set $\{|x|<1\}$ we must have $|x|^{-\gamma_1}\phi(x)=o(1)$ as $n\to \infty$, which is a contradiction to \eqref{eq312}. This completes the proof of the a priori estimate \eqref{objective}, as desired.\\

\noindent\emph{Step 3: Existence and uniqueness.} Consider the space
\begin{equation*}
\mathcal{H}=\Big\{u\in H^\gamma(\R^n),  \ \int_{\mathbb R^n}u(w_j^i)^{\beta-1}Z_{j,l}^i\,dx=0 \quad \mbox{for all }i,j,l \Big\}.
\end{equation*}
Notice that the problem (\ref{eq302}) in $\phi$ gets rewritten as
\begin{equation}\label{eq313}
\phi+K(\phi)=\bar{h} \mbox{ in }\mathcal{H},
\end{equation}
where $\bar{h}$ is defined by duality and $K:\mathcal{H}\to \mathcal{H}$ is a linear compact operator. Using Fredholm's alternative, showing that equation \eqref{eq313} has a unique solution for each $\bar{h}$ is equivalent to showing that the equation has a unique solution for $\bar{h}=0$, which in turn follows from the previous a priori estimate. This concludes the proof of Lemma \ref{lemma301}.

\qed

\noindent{\bf Proof of Proposition \ref{proposition301}. }The proof relies on the contraction mapping in the above weighted norms. We set
\begin{equation*}
S(\bar{u})=(-\Delta)^\gamma \bar{u}-c_{n,\gamma}\bar{u}^{\beta},
\end{equation*}
and also define the linear operator
\begin{equation*}
\mathbb L(\phi)=(-\Delta)^\gamma \phi-c_{n,\gamma}\beta\bar{u}^{\beta-1}\phi.
\end{equation*}
We have that $\bar{u}+\phi$, $\phi \in \mathcal C_*$ solves equation (\ref{eq303}) if and only if $\phi$ satisfies
\begin{equation}\label{equation-contraction}
\phi=G(\phi)
\end{equation}
where
\begin{equation*}
G(\phi):=\mathbb L^{-1}(S(\bar{u}))+c_{n,\gamma}\mathbb L^{-1}(N(\phi)).
\end{equation*}
Here we have defined
$$N(\phi):=(\bar{u}+\phi)^{\beta}-\bar{u}^{\beta}-\beta\bar{u}^{\beta-1}\phi.$$ Also, by $\mathbb L^{-1}$, we are denoting the linear operator which, according to Lemma \ref{lemma301},  associates with $h\in \mathcal C_{**}$ the function $\phi \in \mathcal  C_*$ solving (\ref{eq302}).

We find a solution for \eqref{equation-contraction} by a standard contraction mapping argument. First by the definition of $G$, one has
\begin{equation*}
\|G(\phi)\|_{*}\leq C \left(\|S(\bar u)\|_{**}+\|N(\phi)\|_{**}\right).
\end{equation*}
Fixing a large $C_1>0$, we define the set
\begin{equation*}
B_{C_1}=\big\{\phi\in \mathcal C_*\,\,:\,\, \|\phi\|_*\leq C_1e^{-\frac{(n-2\gamma)L}{4}(1+\xi)}, \int_{\R^n}\phi (w_j^i)^{\beta-1}Z_{j,l}^i\,dx\,=0, \forall i,j,l\big\}.
\end{equation*}
Note that
\begin{equation*}
|(\bar{u}+\phi)^\beta-\bar{u}^\beta-\beta\bar{u}^{\beta-1}\phi|\leq C\left\{\begin{array}{l}
\bar{u}^{\beta-2}\phi^2,\quad  \text{if } |\bar{u}|\geq\frac{1}{4}\phi,\\
\phi^\beta,  \quad \text{if }|\bar{u}|\leq\frac{1}{4}\phi.
\end{array}
\right.
\end{equation*}
Now, let $\phi \in B_{C_1}$.
By our construction, we have that if $\dist(x,\Sigma) <1$,
\begin{equation*}\begin{split}
|N(\phi)|&\leq C(\bar{u}^{\beta-2}\phi^2+\phi^\beta)\\
&\leq C\sum_{i=1}^k\big[\|\phi\|_*^2\bar{u}^{\beta-2}|x-p_i|^{2\gamma_1}+\|\phi\|_*^{\beta}|x-p_i|^{\beta\gamma_1}\big]\\
&\leq C\sum_{i=1}^k\big[\|\phi\|_*^2|x-p_i|^{\gamma_1-2\gamma}|x-p_i|^{\gamma_1+\frac{n-2\gamma}{2}}+\|\phi\|_*^\beta|x-p_i|^{\gamma_1-2\gamma}|x-p_i|^{\beta \gamma_1-\gamma_1+2\gamma}\big]\\
&\leq C\sum_{i=1}^k\big[\|\phi\|_*^2+\|\phi\|_*^\beta\big]|x-p_i|^{\gamma_1-2\gamma},
\end{split}\end{equation*}
and for $\dist(x,\Sigma)\geq 1$,
\begin{equation*}\begin{split}
|N(\phi)|&\leq C[\|\phi\|_*^2\bar{u}^{\beta-2}|x|^{-2(n-2\gamma)}+\|\phi\|_*^\beta |x|^{-\beta(n-2\gamma)}]\\
&\leq C|x|^{-(n+2\gamma)}[e^{-(\beta-2)\frac{(n-2\gamma)L}{4}}\|\phi\|_*^2+\|\phi\|_*^\beta].
\end{split}\end{equation*}
Combining the above two estimates, one has
\begin{equation*}
\|N(\phi)\|_{**}\leq C[e^{-(\beta-2)\frac{(n-2\gamma)L}{4}}\|\phi\|_*^2+\|\phi\|_*^\beta]\leq o(1)\|\phi\|_*.
\end{equation*}

Now we consider two functions $\phi_1,\phi_2\in B_{C_1}$, it is easy to see that for $L$ large,
\begin{equation*}
\|N(\phi_1)-N(\phi_2)\|_{**}\leq o(1)\|\phi_1-\phi_2\|_*.
\end{equation*}
Therefore,  by the above estimates for $N(\phi)$ and \eqref{estimate-S}, $G$ is a contraction mapping in $B_{C_1}$, thus it has a fixed point in this set. This completes the proof of the Proposition.
\qed

\section{Estimates on the coefficients $c_{j,l}^i$} \label{sec4}

In this section we prove some estimates related to the coefficients $c_{j,l}^i $ obtained in the last section, first in the special case of the configuration $(r_j^i, a_j^i)=(0, 0)$ and then for a general configuration of parameters satisfying (\ref{parar})-(\ref{para2}). These are studied in subsections \ref{sec401} and \ref{sec402} respectively. Later on, in the next section, we study also the derivative with respect to a variation of the parameters.

\subsection{Estimates on the $c_{j,l}^i$ for $(a_j^i,r_j^i)=(0,0)$}\label{sec401}

In this subsection, we prove the decay of $c_{j,l}^i$ when the parameters $(a_j^i, r_j^i)=(0, 0)$. We denote $\bar u_0$ to be the approximate solution and $\phi$ the perturbation function found in Proposition \ref{proposition301} for this particular case. Define the numbers $\bar\beta_{j,l}^i$ as
\begin{equation*}
\bar\beta_{j,l}^i=\int_{\R^n}[(-\Delta )^\gamma(\bar{u}_0+\phi)-c_{n,\gamma}(\bar{u}_0+\phi)^\beta]Z_{j,l}^i\,dx.
\end{equation*}
Then we have the following estimates on $\bar\beta_{j,l}^i$:
\begin{lemma}\label{lemma401}
Given $\{R^i\}$ satisfying \eqref{parar} and let $\bar{u}_0$ be the function defined in \eqref{eq301} for the parameters $(r_j^i,a_j^i)=(0,0)$. Let $\phi$ and $(c_{j,l}^i)$ be given in Proposition \ref{proposition301}. Then the coefficients $\bar\beta_{j,l}^i$ satisfy
\begin{equation*}\begin{split}
&\bar\beta_{0,0}^i=-c_{n,\gamma}q_i\Big[A_2\sum_{i'\neq i}|p_{i'}-p_i|^{-(n-2\gamma)}(R^iR^{i'})^{\frac{n-2\gamma}{2}}q_{i'}-q_i\Big]e^{-\frac{(n-2\gamma)L}{2}}(1+o(1))
+O(e^{-\frac{(n-2\gamma)L}{2}(1+\xi)}),\\
&\bar\beta_{0,l}^i=c_{n,\gamma}\lambda_0^i\Big[A_3\sum_{i'\neq i}\frac{(p_{i'}-p_i)_l}{|p_{i'}-p_i|^{n-2\gamma+2}}(R^iR^{i'})^{\frac{n-2\gamma}{2}}
q_{i'}q_ie^{-\frac{n-2\gamma}{2}L}+O(e^{-\frac{(n-2\gamma)L}{2}(1+\xi)})\Big]\mbox{ for }l=1,\cdots,n.\\
\end{split}\end{equation*}
For $j\geq 1$, we have
\begin{eqnarray*}
&&\bar\beta_{j,0}^i=O(e^{-\frac{(n-2\gamma)L}{2}(1+\xi)}e^{-\sigma t^i_j}),\\
&&\bar\beta_{j,l}^i=O(e^{-\frac{(n-2\gamma)L}{2}(1+\xi)}e^{-(1+\sigma)t^i_j}),
\end{eqnarray*}
where $A_2>0, A_3<0$ are two constants independent of $L$ and $\sigma=\min\{\gamma_1+\frac{n-2\gamma}{2}, \frac{n-2\gamma}{4}\}$ independent of $L$ large.
\end{lemma}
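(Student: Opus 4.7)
The plan is to compute $\bar\beta_{j,l}^i$ directly from its definition by testing the explicit decomposition
\[
(-\Delta)^\gamma(\bar u_0+\phi)-c_{n,\gamma}(\bar u_0+\phi)^\beta = S(\bar u_0) + \mathbb L(\phi) - c_{n,\gamma}N(\phi)
\]
(where $N(\phi)=(\bar u_0+\phi)^\beta-\bar u_0^\beta-\beta\bar u_0^{\beta-1}\phi$) against $Z_{j,l}^i$. The $\phi$-pieces should be subdominant, while $\int S(\bar u_0)Z_{j,l}^i\,dx$ carries the leading-order information.

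For the $\phi$-pieces, integration by parts combined with $(-\Delta)^\gamma Z_{j,l}^i = c_{n,\gamma}\beta(w_j^i)^{\beta-1}Z_{j,l}^i$ (obtained by differentiating the bubble equation in the parameters $r_j^i,a_j^i$) and the orthogonality $\int \phi(w_j^i)^{\beta-1}Z_{j,l}^i\,dx=0$ reduce the linearized term to
\[
\int \mathbb L(\phi)\,Z_{j,l}^i\,dx = c_{n,\gamma}\beta\int\phi\bigl[(w_j^i)^{\beta-1}-\bar u_0^{\beta-1}\bigr]Z_{j,l}^i\,dx.
\]
Using the bound $\|\phi\|_*\leq Ce^{-(n-2\gamma)L(1+\xi)/4}$ from Proposition \ref{proposition301}, together with a region-splitting argument nearly identical to Step~1 of the proof of Lemma \ref{lemma301}, this is bounded by $Ce^{-(n-2\gamma)L(1+\xi)/2}$ times the appropriate $t_j^i$-decay factor; the $N(\phi)$ piece is handled with the same quadratic estimates used in the contraction argument at the end of the proof of Proposition \ref{proposition301}.

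The core of the proof is then $\int S(\bar u_0)Z_{j,l}^i\,dx$. Since $Z_{j,l}^i$ is concentrated in $B(p_i,C\lambda_j^i)$, I would restrict attention to a neighborhood of $p_i$ and decompose $\bar u_0 = \tilde u_{L_i}(\cdot-p_i) + V_i$, where $\tilde u_{L_i}$ is the half-Dancer at $p_i$ and $V_i(x):=\sum_{i'\neq i}\tilde u_{L_{i'}}(x-p_{i'})$ collects the contributions of the other towers. Two observations drive everything: (i) $\tilde u_{L_i}=u_{L_i}-\sum_{j<0}w_j^i$ where $u_{L_i}$ solves the Delaunay equation exactly, and near $p_i$ the truncated tail is dominated by $w_{-1}^i\sim (R^i)^{-(n-2\gamma)/2}q_ie^{-(n-2\gamma)L/4}$, producing a self-interaction responsible for the $q_i^2$ contribution; (ii) $V_i$ is essentially constant on scale $\lambda_0^i$ with $V_i(p_i)\sim\sum_{i'\neq i}q_{i'}(R^{i'})^{(n-2\gamma)/2}e^{-(n-2\gamma)L/4}|p_i-p_{i'}|^{-(n-2\gamma)}$, and its $l$-th partial derivative at $p_i$ produces the $(p_{i'}-p_i)_l/|p_{i'}-p_i|^{n-2\gamma+2}$ structure. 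Writing $\bar u_0^\beta\approx\tilde u_{L_i}^\beta+\beta\tilde u_{L_i}^{\beta-1}V_i$ and using the Delaunay identity to cancel the leading $\tilde u_{L_i}$-part, $\int S(\bar u_0)Z_{0,0}^i\,dx$ becomes a linear combination of $V_i(p_i)$ and $w_{-1}^i(p_i)$ times the scalar integral $\int(w_0^i)^{\beta-1}Z_{0,0}^i\,dx$; for $l\geq 1$, radial symmetry of the bubble annihilates the constant part of $V_i$ and the surviving leading piece is $\nabla V_i(p_i)\cdot\int x\,(w_0^i)^{\beta-1}Z_{0,l}^i\,dx$. Reading off these scalar integrals in terms of the standard bubble gives the explicit constants $A_2$ and $A_3$.

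For $j\geq 1$ the same expansion applies but the kernel lives at the much smaller scale $\lambda_j^i\ll\lambda_0^i$. Rescaling $x=p_i+a_j^i+\lambda_j^i y$, both the self-interaction (now from the neighbors $w_{j\pm1}^i$) and the cross-interaction with $V_i$ acquire an additional factor $e^{-\sigma t_j^i}$ from the reduction in scale, and the translation cases $l\geq 1$ pick up one further $e^{-t_j^i}$ from the Taylor expansion of $V_i$ about $p_i+a_j^i$. The main technical obstacle is the bookkeeping in the third paragraph: one must verify that the $\phi_i$-correction to $\tilde u_{L_i}$, the inter-bubble interactions $(w_j^i,w_{j'}^i)$ at $p_i$ (which a priori have the same size as the leading terms but cancel once one invokes the fact that $u_{L_i}$ solves the Delaunay equation), and the higher-order Taylor remainders of $V_i$, are all genuinely absorbed into $O(e^{-(n-2\gamma)L(1+\xi)/2})$, so that only the two advertised leading terms survive.
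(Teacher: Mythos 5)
Your decomposition $\bar\beta_{j,l}^i=\int S(\bar u_0)Z_{j,l}^i+\int\mathbb L(\phi)Z_{j,l}^i-c_{n,\gamma}\int N(\phi)Z_{j,l}^i$ is exactly the paper's, and your treatment of the main piece $\int S(\bar u_0)Z_{j,l}^i$ is correct but conceptually cleaner than the paper's: instead of computing all pairwise bubble interactions and then invoking oddness of $Z_{j,0}^i$ to kill the symmetric ones, you write $\bar u_0=u_{L_i}-\sum_{j'<0}w_{j'}^i+V_i$ near $p_i$, use $S(u_{L_i})=0$, and attribute the self-interaction entirely to the amputated tail $\sum_{j'<0}w_{j'}^i\sim w_{-1}^i$. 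That is genuinely equivalent: by the reflection $t\mapsto 2t_0^i-t$ one has $\int(w_0^i)^{\beta-1}w_{-1}^iZ_{0,0}^i=-\int(w_0^i)^{\beta-1}w_1^iZ_{0,0}^i$, so your $w_{-1}^i$ term reproduces the paper's $T_{1,0}=-c_{n,\gamma}\beta\int(w_0^i)^{\beta-1}w_1^iZ_{0,0}^i$ with the correct sign, and your $V_i$ term reproduces $T_{2,0}$; your scalings $(\lambda_{-1}^i)^{-(n-2\gamma)/2}=(R^i)^{-(n-2\gamma)/2}q_ie^{-(n-2\gamma)L/4}$ and $V_i(p_i)\sim\sum_{i'\neq i}q_{i'}(R^{i'})^{(n-2\gamma)/2}e^{-(n-2\gamma)L/4}|p_i-p_{i'}|^{-(n-2\gamma)}$ are also right, and they explain in one stroke both the $q_i^2$ balancing term and the automatic extra decay for $j\geq 1$ (the tail is exponentially farther from $t_j^i$). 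This is a legitimate and arguably more transparent route for $\bar\beta_{j,l,1}^i$.

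However, there is a real gap in your handling of the $\phi$-pieces for $l=1,\dots,n$. You claim these are ``subdominant'' and controlled by the same region-splitting as in Step~1 of the proof of Lemma~\ref{lemma301} plus the quadratic estimates from Proposition~\ref{proposition301}. That argument only yields $|\bar\beta_{j,l,2}^i|+|\bar\beta_{j,l,3}^i|\lesssim e^{-\frac{(n-2\gamma)L}{2}(1+\xi)}e^{-\sigma t_j^i}$, with no extra factor of $\lambda_j^i$. For $l\geq 1$ the leading term in $\bar\beta_{0,l}^i$ is of order $\lambda_0^ie^{-\frac{(n-2\gamma)L}{2}}\approx e^{-\frac{(n-2\gamma+1)L}{2}}$, while the naive $\phi$-error is $e^{-\frac{(n-2\gamma)(1+\xi)L}{2}}$; since $\xi$ is small (certainly $\xi<\frac{1}{n-2\gamma}$ is allowed), the naive error is actually \emph{larger} than the term you are trying to isolate, and likewise the claimed $e^{-(1+\sigma)t_j^i}$ decay for $j\geq 1$ is not produced. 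The paper explicitly flags this (``But this is not enough for our analysis; one needs to be more precise'') and fixes it by splitting $\phi=\sum_i\varphi_i\chi_i+\tilde\varphi$ with $\varphi_i$ radial about $p_i$ solving an auxiliary problem that absorbs the radial part $\mathcal E(|x-p_i|)$ of $S(\bar u_0)$, so that $\tilde\varphi$ gains one extra power of $|x-p_i|$ near $p_i$; only after pairing the odd kernel $Z_{j,l}^i$ ($l\geq 1$) with this refined $\tilde\varphi$ (plus the $O(\lambda_0^i|x-p_i|)$ nonradial part of $\bar u_0^{\beta-1}$) does the extra $\lambda_j^i$ appear. Without this radial/nonradial decomposition of $\phi$, the $l\geq 1$ conclusions of the lemma do not follow, so this step needs to be supplied.
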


\begin{proof}

With some manipulation and the orthogonality condition satisfied by $\phi$, we find that
\begin{equation*}
\bar\beta_{j,l}^i=\bar\beta_{j,l,1}^i+\bar\beta_{j,l,2}^i+\bar\beta_{j,l,3}^i,
\end{equation*}
for
\begin{equation*}\begin{split}
&\bar\beta_{j,l,1}^i=\int_{\R^n}[(-\Delta)^\gamma \bar{u}_0-c_{n,\gamma}(\bar{u}_0)^\beta]Z_{j,l}^i\,dx,\\
&\bar\beta_{j,l,2}=\int_{\mathbb R^n}\mathbb L_0(\phi)Z_{j,l}^{i} \,dx= -c_{n,\gamma}\beta\int_{\R^n}[(\bar{u}_0)^{\beta-1}-(w_{j}^i)^{\beta-1}]
Z_{j,l}^i\phi\,dx,\\
&\bar\beta_{j,l,3}^i=-c_{n,\gamma}\int_{\R^n }[(\bar{u}_0+\phi)^\beta-(\bar{u}_0)^\beta-\beta (\bar{u}_0)^{\beta-1}\phi]Z_{j,l}^i\,dx,
\end{split}
\end{equation*}
where we have defined $\mathbb L_0(\phi)=(-\Delta)^\gamma \phi-c_{n,\gamma}\beta (\bar u_0)^{\beta-1} \phi$.\\

\noindent\emph{Step 1: Estimate for $\bar\beta_{j,l,2}^i$ and $\bar\beta_{j,l,3}^i$}.
By the estimates in the proof of Lemma \ref{lemma301} and the bounds satisfied by $\phi$, one has
\begin{equation*}
|\bar\beta_{j,l,2}^i|\leq Ce^{-\frac{(n-2\gamma)L}{2}(1+\xi)}e^{-(\gamma_1+\frac{n-2\gamma}{2}) t_j}.
\end{equation*}
In addition,
\begin{equation*}\begin{split}
-\bar\beta_{j,l,3}^i&=\int_{\R^n}N(\phi)Z_{j,l}^i\,dx\\
&=\int_{B(p_i,1)}+\sum_{i'\neq i}\int_{B(p_{i'},1)}+\int_{\R^n \backslash \cup_i B(p_i,1)}=:J_1+J_2+J_3.
\end{split}\end{equation*}
We estimate this expression term by term. For $l=0$, one has
\begin{equation*}\begin{split}
J_1&=\int_{B(p_i,1)}N(\phi)Z_{j,l}^i\,dx\leq C\int_{B(p_i,1)}\|\phi\|_*^2\,|x-p_i|^{2\gamma_1}\bar{u}^{\beta-2}Z_{j,l}^i\,dx\\
&\leq C\|\phi\|_*^2\int_{B(p_i,1)}|x-p_i|^{2\gamma_1-\frac{6\gamma-n}{2}}|x-p_i|^{-\frac{n-2\gamma}{2}}|v'_j|\,dx\\
&\leq Ce^{-\frac{(n-2\gamma)L}{2}(1+\xi)}e^{-\min\{2(\gamma_1+\frac{n-2\gamma}{2}), \frac{n-2\gamma}{2}\} t_j^i},
\end{split}\end{equation*}
recalling \eqref{decay-phi}. 
Similarly,
\begin{equation*}\begin{split}
J_2&=\sum_{i'\neq i}\int_{B(p_{i'},1)}N(\phi)Z_{j,l}^i\,dx\leq C\|\phi\|_*^2\sum_{i'\neq i}\int_{B(p_{i'},1)}|x-p_{i'}|^{2\gamma_1}|x-p_{i'}|^{\frac{6\gamma-n}{2}}Z_{j,l}^i\,dx\\
&\leq C\|\phi\|_*^2\,(\lambda_j^i)^{\frac{n-2\gamma}{2}}\leq Ce^{-\frac{(n-2\gamma)L}{2}(1+\xi)}e^{-\frac{n-2\gamma}{2} t_j^i}
\end{split}\end{equation*}
and
\begin{equation*}\begin{split}
J_3&=\int_{\R^n \backslash \cup_i B(p_i,1)}N(\phi)Z_{j,l}^i\,dx\\
&\leq C\|\phi\|_*^2\int_{\R^n \backslash  \cup_i B(p_i,1)}|x|^{-2(n-2\gamma)}\bar{u}^{\beta-2}Z_{j,l}^i\,dx\\
&\leq C\|\phi\|_*^2\int_{\R^n \backslash  \cup_i B(p_i,1)}|x|^{-2(n-2\gamma)}e^{-(\beta-2)\frac{(n-2\gamma)L}{4}}
|x|^{-(n-2\gamma)(\beta-2)}(\lambda_j^i)^{\frac{n-2\gamma}{2}}|x|^{-(n-2\gamma)}\,dx\\
&\leq Ce^{-\frac{(n-2\gamma)L}{2}(1+\xi)}e^{-\frac{n-2\gamma}{2} t_j^i}.
\end{split}\end{equation*}
Combining the above estimates, we have for $l=0$,
\begin{equation*}
|\bar\beta_{j,0,2}^i|+|\bar\beta_{j,0,3}^i|\leq Ce^{-\frac{(n-2\gamma)L}{2}(1+\xi)}e^{-(\gamma_1+\frac{n-2\gamma}{2}) t_j^i}.
\end{equation*}

For $l=1,\cdots,n$, by the estimate for $\phi$ given in \eqref{decay-phi} and the bounds for the term $I_1$ from \eqref{ecuacion11} in Section \ref{sec3} one obtains a similar estimate as above. But this is not enough for our analysis; one needs to be more precise. In order to do this, first recall the definition of $\bar{u}_0$ from \eqref{eq301}. Then
\begin{equation*}
\bar{u}_0=\sum_{i=1}^k U_i(|x-p_i|),
\end{equation*}
where $U_i$ are radial functions in $|x-p_i|$.

Near each singular point $p_i$, we can decompose $$\bar{u}_0=U_i(|x-p_i|)+D_i+O(\lambda_0^i|x-p_i|),$$
where $D_i$ depends on $p_i-p_{i'}$ for $i'\neq i$. And similarly, we can decompose $S(\bar{u}_0)$ into two parts,
\begin{equation*}
S(\bar{u}_0)=\mathcal E(|x-p_i|)+\mathcal E_1(x),
\end{equation*}
where $\mathcal E $ is radial function in $|x-p_i|$ and can be controlled by $Ce^{-\frac{(n-2\gamma)L}{4}(1+\xi)}|x-p_i|^{\gamma_1-2\gamma}$; the second term can be controlled  by $Ce^{-\frac{(n-2\gamma)L}{4}(1+\xi)}|x-p_i|^{\gamma_1-2\gamma+1}$.
We now proceed as follows. Let $\varphi_i=\varphi_i(|x-p_i|)$ be the solution to
\begin{equation*}
\left\{\begin{split}
&(-\Delta)^\gamma \varphi_i-c_{n,\gamma}\beta[U_i+D_i]^{\beta-1}\varphi_i\chi_i=\mathcal E(|x-p_i|)\chi_i+\sum_{j=0}^\infty c^i_{j,0}(w_j^i)^{\beta-1}Z_{j,0}^i,\\
&\int_{\R^n}(w_j^i)^{\beta-1}Z_{j,0}^i\varphi_i\,dx\,=0 \quad\mbox{ for }j=0,\cdots, \infty.
\end{split}
\right.
\end{equation*}
Note that the existence of such a $\varphi_i$ can be proved similarly to the arguments in Section \ref{sec3}. Moreover, as in \eqref{decay-phi} one has
\begin{equation*}
\|\varphi_i\|_*\leq Ce^{-\frac{(n-2\gamma)L}{4}(1+\xi)}.
\end{equation*}
Then we decompose $\phi=\sum_{i=1}^k\varphi_i\chi_i+\tilde{\varphi}$. In this case, since we have cancelled the radial part in the error near each singular point $p_i$ by $\varphi_i$, then the extra error will have an extra factor $|x-p_i|$ and $\tilde{\varphi}$ will satisfy
\begin{equation*}
|\tilde{\varphi}|\leq C\left\{\begin{array}{l}
e^{-\frac{(n-2\gamma)L}{4}(1+\xi)}|x-p_i|^{\gamma_1+1} \mbox{ in }B(p_i,1),\\
e^{-\frac{(n-2\gamma)L}{4}(1+\xi)}|x|^{-(n-2\gamma)} \mbox{ in }\R^n \backslash \cup_i B(p_i,1).
\end{array}
\right.
\end{equation*}
Therefore, by the above decomposition of $\phi$ into radial and nonradial parts one deduces
\begin{equation*}\begin{split}
-\bar\beta_{j,l,2}^i&=-\int_{\R^n}\mathbb L_0(\phi)Z_{j,l}^i\,dx\\
&=\int_{\R^n }[(\bar{u}_0)^{\beta-1}-(w_j^i)^{\beta-1}]Z_{j,l}^i\phi \,dx\\
&=\int_{\R^n}[(U_i+D_i)^{\beta-1}-(w_j^i)^{\beta-1}]Z_{j,l}^i\tilde{\varphi}\,dx
+\int_{\R^n}[(\bar{u}_0)^{\beta-1}-(U_i+D_i)^{\beta-1}]Z_{j,l}^i\phi \,dx.
\end{split}\end{equation*}
Similar to the estimate of $I_1$ in the proof of Lemma \ref{lemma301}, recalling the asymptotic behaviour of $\tilde{\varphi}$ near $p_i$, we can get that the first term can be controlled by \begin{equation*}
\lambda_j^ie^{-\frac{(n-2\gamma)L}{2}(1+\xi)}e^{-(\gamma_1+\frac{n-2\gamma}{2}) t_j^i}
\end{equation*}
in a ball $B(p_i,1)$ (see \eqref{ecuacion12} and notice the extra factor $\lambda_j^i$).
For the second term,
\begin{equation*}\begin{split}
\int_{B(p_i,1)}&[(\bar{u}_0)^{\beta-1}-(U_i+D_i)^{\beta-1}]Z_{j,l}^i\phi \,dx\\
&\leq Ce^{-\frac{(n-2\gamma)L}{4}}\|\phi\|_*\int_{B(p_i,1)}|x|^{\gamma_1-2\gamma+1}(\sum_{j'} v_{j'})^{\beta-2}v_j^{1+\frac{2}{n-2\gamma}}\,dx\\
&\leq Ce^{-\frac{(n-2\gamma)L}{2}(1+\xi )}\int_{B(p_i,1)}|x|^{\gamma_1-2\gamma+1}v_j^{\beta-1+\frac{2}{n-2\gamma}}\,dx\\
&\leq C\lambda_j^ie^{-\frac{(n-2\gamma)L}{2}(1+\xi)}e^{-(\gamma_1+\frac{n-2\gamma}{2}) t_j^i}.
\end{split}\end{equation*}
 Combining the above two estimates,
\begin{equation*}
\int_{B(p_i,1)}\mathbb L_0(\phi)Z_{j,l}^i\,dx
=O(\lambda_j^ie^{-\frac{(n-2\gamma)L}{2}(1+\xi)}e^{-(\gamma_1+\frac{n-2\gamma}{2}) t_j^i}).
\end{equation*}
Next, the asymptotic behaviour of $Z_{j,l}^i$ at infinity, given by
\begin{equation*}
|Z_{j,l}^i|=(\lambda_j^i)^{\frac{n-2\gamma}{2}+1}|x|^{-(n-2\gamma+1)} \quad\mbox{ if }|x-p_i|\geq 1,
\end{equation*}
yields  that
\begin{equation*}
\sum_{i'\neq i}\int_{B(p_{i'},1)}\mathbb L_0(\phi)Z_{j,l}^i\,dx+\int_{\R^n \backslash \cup_{i'} B(p_{i'},1)}\mathbb L_0(\phi)Z_{j,l}^i\,dx\leq C\lambda_j^ie^{-\frac{(n-2\gamma)L}{2}(1+\xi)}e^{-(\gamma_1+\frac{n-2\gamma}{2}) t_j^i}.
\end{equation*}
Using similar argument, we obtain an analogous estimate for $\bar\beta_{j,l,3}^i$. Thus for $l=1,\cdots,n$,
\begin{equation*}
|\bar\beta_{j,l,2}^i|+|\bar\beta_{j,l,3}^i|\leq C\lambda_j^ie^{-\frac{(n-2\gamma)L}{2}(1+\xi)}e^{-(\gamma_1+\frac{n-2\gamma}{2}) t_j^i},
\end{equation*}
which completes the proof of Step 1.\\

\noindent\emph{Step 2: Estimate for $\bar\beta_{j,l,1}^i$}.
Denote by $E:=S(\bar u_0)=(-\Delta)^\gamma \bar{u}_0-c_{n,\gamma}\bar{u}_0^\beta.$ We compute
\begin{equation*}\begin{split}
\bar\beta_{j,l,1}^i&=\int_{\R^n}EZ_{j,l}^i\,dx=\Big[\int_{B(p_i,1)}+\sum_{i'\neq i}\int_{B(p_{i'},1)}+\int_{\R^n \backslash\cup_{i'} B(p_{i'},1)}\,\big]\,dx\\
&=I_{1,j,l}+I_{2,j,l}+I_{3,j,l}.
\end{split}\end{equation*}
Recalling the estimate for $E$ from \eqref{estimate-S} one has
\begin{equation*}\begin{split}
I_{2,j,0}&\leq Ce^{-\frac{(n-2\gamma)L}{4}(1+\xi)}\sum_{i'\neq i}\int_{B(p_{i'},1)}|x-p_{i'}|^{\gamma_1-2\gamma}(\lambda_j^i)^{\frac{n-2\gamma}{2}}\,dx
\leq Ce^{\frac{(n-2\gamma)L}{2}(1+\xi)}e^{-\frac{n-2\gamma}{4} t_j^i}
\end{split}\end{equation*}
and
\begin{equation*}\begin{split}
I_{3,j,0}&\leq Ce^{-\frac{(n-2\gamma)L}{4}(1+\xi)}\int_{\R^n / \cup_{i'} B(p_{i'},1)}|x|^{-(n+2\gamma)}(\lambda_j^i)^{\frac{n-2\gamma}{2}}|x|^{-(n-2\gamma)}\,dx
\leq Ce^{-\frac{(n-2\gamma)L}{2}(1+\xi)}e^{-\frac{n-2\gamma}{4} t_j^i}.
\end{split}\end{equation*}
For $l=1,\cdots,n$, we know that $|Z_{j,l}^i|=O(\lambda_j^i)Z_{j,0}^i$ , which yields easily that
\begin{equation*}
|I_{2,j,l}|+|I_{3,j,l}|\leq Ce^{-\frac{(n-2\gamma)L}{2}(1+\xi)}e^{-(1+\frac{n-2\gamma}{4}) t^i_j}.
\end{equation*}

Next, for $l=0,\ldots,n$, we consider $I_{1,j,l}$: fixed $i,j$, substitute the expression for $\bar u_0$ from \eqref{eq301}, so
\begin{equation*}\begin{split}
E&=(-\Delta)^\gamma \bar{u}_0-c_{n,\gamma}\bar{u}_0^\beta\\
&=(-\Delta)^\gamma (\sum_{j'=0}^\infty w_{j'}^i)+(-\Delta)^\gamma \phi_i-\beta c_{n,\gamma}(w_j^i)^{\beta-1}\phi_i+(-\Delta)^\gamma(1-\chi_i)\phi_i\\
&\qquad-c_{n,\gamma}[(\sum_{j'=0}^\infty w_{j'}^i+\phi_i+\sum_{i'\neq i}w_j^{i'})^\beta-\sum_{i'\neq i}(w_j^{i'})^\beta-\beta (w_j^i)^{\beta-1}\phi_i]\\
&=-c_{n,\gamma}[(\sum_{j'=0}^\infty w_{j'}^i+\phi_i+\sum_{i'\neq i}w_j^{i'})^\beta-\sum_{j=0}^\infty (w_j^i)^\beta-\sum_{i'\neq i}(w_j^{i'})^\beta-\beta (w_j^i)^{\beta-1}\phi_i]\\
&\qquad+\mathbb L_j^i\phi_i+(-\Delta)^\gamma(1-\chi_i)\phi_i\\
&=-c_{n,\gamma}\big[\bar{u}^\beta-\sum_{j'=0}^\infty (w_{j'}^i)^\beta-\sum_{i'\neq i}(w_j^{i'})^\beta-\beta(w_j^i)^{\beta-1}\big(\phi_i+\sum_{i'\neq i}w_j^{i'}+\sum_{j'\neq j}w_{j'}^i\big)\\
&\qquad+\beta(w_j^i)^{\beta-1}\big(\sum_{i'\neq i}w_j^{i'}+\sum_{j'\neq j}w_{j'}^i\big)\big]+\mathbb L_j^i\phi_i+(-\Delta)^\gamma(1-\chi_i)\phi_i.
\end{split}\end{equation*}
Here $\mathbb L_j^i$ denotes the linearized operator around $w_j^i$. Looking at the equation that $\phi_i$ satisfies and its bounds (see formula \eqref{psi} and Corollary \ref{pro202}), we have in general the following estimates:

On the one hand, for $l=0$, since $Z_{j,0}^i$ is odd in the variable $t^i-t_j^i$, where we have defined $t^i=-\log |x-p_i|$, by the above expansion for $E$,
\begin{equation*}\begin{split}
I_{1,j,0}&\leq C\int_{B(p_i,1)}(w_j^i)^{\beta-1}Z_{j,0}^i{\big(\sum_{i'\neq i}w_0^{i'}+\sum_{j'\neq j}w_{j'}^i\big)}\,dx
+O(e^{-\frac{(n-2\gamma)L}{4}(1+\xi)}(\lambda_j^i)^{\frac{n-2\gamma}{2}})\\
&\leq C\sum_{i'\neq i}(\lambda_0^{i'})^{\frac{n-2\gamma}{2}}
\int_{B_1}|x|^{-2\gamma}(v_j^i)^{\beta-1}|x|^{-\frac{n-2\gamma}{2}}|(v_j^i)'|\,dx
+\int_{B_1}\sum_{j'\neq j}|x|^{-n}(v_j^i)^{\beta-1}(v_j^i)' v_{j'}^i\,dx\\
&\leq Ce^{-\frac{n-2\gamma}{4}L}\int_{0}^\infty e^{-\frac{n-2\gamma}{2}t}(v_j^i)^\beta \,dt+\int_0^\infty (v_j^i)^{\beta-1}(v_j^i)'\sum_{j'\neq j}v_{j'}^i\,dt\\
&\quad+O(e^{-\frac{(n-2\gamma)L}{4}(1+\xi)}(\lambda_j^i)^{\frac{n-2\gamma}{2}})\\
&\leq Ce^{-\frac{n-2\gamma}{4}L}e^{-\frac{n-2\gamma}{2}t^i_j}+\sum_{j'\leq 2j}\int_0^\infty (v_j^i)^{\beta-1}(v_j^i)'v_{j'}^i\,dt+\sum_{j'\geq 2j+1}\int_0^\infty (v_j^i)^{\beta-1}(v_j^i)'v_{j'}^i\,dt.
\end{split}\end{equation*}
Let us bound the two terms in this expression:
\begin{equation*}\begin{split}
&\int_0^\infty (v_j^i)^{\beta-1}(v_j^i)'\sum_{j'\leq 2j}v_{j'}^i\,dt\\
&=\int_0^{t_{2j}^i+\frac{L_i}{2}}(v_j^i)^{\beta-1}(v_j^i)'\sum_{j'\leq 2j}v_{j'}^i\,dt+\int_{ t_{2j}^i+\frac{L_i}{2}}^\infty (v_j^i)^{\beta-1}(v_j^i)'\sum_{j'\leq 2j}v_{j'}^i\,dt\\
&(t'=t-t_j^i)\\
&=\int_{-t_j^i}^{t_j^i}v^{\beta-1}v'(t')\sum_{0\leq j'\leq 2j}v_{j'}^i\,dt'+\int_{t\geq t_{2j}^i+\frac{L_i}{2}}(v_j^i)^{\beta-1}(v_j^i)'\sum_{j'\leq 2j}v_{j'}^i\,dt.
\end{split}\end{equation*}
For the first integral, since $v'(t')$ is odd in $t'$ and $\sum_{j'\leq 2j}v_{j'}^i$ is an even function of $t'$, this integral is $0$. In the meantime, thanks to the exponential decaying of $v$, the second integral is bounded by $e^{-\frac{n+2\gamma}{2}t_j^i}$, and we may conclude that
\begin{equation*}
\int_0^\infty (v_j^i)^{\beta-1}(v_j^i)'\sum_{j'\leq 2j}v_{j'}^i\,dt\leq Ce^{-\frac{n+2\gamma}{2}t_j^i}.
\end{equation*}
In addition,
\begin{equation*}\begin{split}
\sum_{j'\geq 2j+1}&\int_0^\infty (v_j^i)^{\beta-1}(v_j^i)'v_{j'}^i\,dt\\
&\leq Ce^{-\frac{(n-2\gamma)}{2}|t^i_{2j+1}-t^i_j|}\leq Ce^{-\frac{n-2\gamma}{4}L}e^{-\frac{n-2\gamma}{2}t_j^i.}
\end{split}\end{equation*}
In conclusion, one has
\begin{equation*}
I_{1,j,0}\leq Ce^{-\frac{n-2\gamma}{4}L}e^{-\frac{n-2\gamma}{2}t_j^i}+e^{-\frac{n+2\gamma}{2}t_j^i}.
\end{equation*}

On the other hand,  for $l=1,\cdots,n$, since $Z_{j,l}^i$ is odd in $x-p_i$, and both $w_j^i$, $\phi_i$ are even in $x-p_i$, one has
\begin{equation*}\begin{split}
I_{1,j,l}&\leq C\int_{B(p_i,1)}\sum_{i'\neq i}(w_j^i)^{\beta-1}Z_{j,l}^iw_0^{i'}\,dx\\
&\leq C\int_{B(p_i,1)}\sum_{i'\neq i}(w_j^i)^{\beta-1}|x-p_i|^{-\frac{n-2\gamma}{2}+1}v_j^{1+\frac{2}{n-2\gamma}}
(\lambda_0^{i'})^{\frac{n-2\gamma}{2}}\,dx\\
&\leq Ce^{-\frac{n-2\gamma}{4}L}\int_{B_1}|x|^{-2\gamma}v_j^{\beta-1}|x|^{-\frac{n-2\gamma}{2}+1}
v_j^{1+\frac{2}{n-2\gamma}}\,dx\\
&\leq Ce^{-\frac{n-2\gamma}{4}L}e^{-(1+\frac{n-2\gamma}{2})t^i_j}.
\end{split}\end{equation*}
From the above two estimates, when $j\geq 1$,
\begin{equation*}
I_{1,j,l}\leq \left\{\begin{array}{l}
e^{-\frac{(n-2\gamma)L}{2}(1+\xi)}e^{-\sigma t_j^i}\ \mbox{ if }l=0,\\
e^{-\frac{(n-2\gamma)L}{2}(1+\xi)}e^{-(1+\sigma) t^i_j}\ \mbox{ if }l=1,\cdots,n,
\end{array}
\right.
\end{equation*}
for some $\xi,\sigma>0$.\\

On the contrary, for $j=0$ one has
\begin{equation*}
I_{1,0,0}=O(e^{-\frac{n-2\gamma}{2}L}), \quad I_{1,0,l}=O(\lambda_0^i)e^{-\frac{n-2\gamma}{2}L},
\end{equation*}
but can obtain more accurate estimates in this case. This is going to be the crucial step in the proof of the Lemma since it gives the formula for the compatibility condition of the balancing condition.

First, if $l=0$,
\begin{equation*}\begin{split}
I_{1,0,0}&=-c_{n,\gamma}\beta\int_{\R^n}(w_0^i)^{\beta-1}(w_1^i+\sum_{i'\neq i}w_0^{i'})Z_{0,0}^i\,dx+O(e^{-\frac{(n-2\gamma)L}{2}(1+\xi)})\\
&=:T_{1,0}+T_{2,0}+O(e^{-\frac{(n-2\gamma)L}{2}(1+\xi)}).
\end{split}\end{equation*}
In this case, by expression \eqref{eqa01},
\begin{equation*}
T_{1,0}=-c_{n,\gamma}F(|\log \tfrac{\lambda_1^i}{\lambda_0^i}|)\frac{\log\frac{\lambda_1^i}{\lambda_0^i}}{|\log\frac{\lambda_1^i}{\lambda_0^i}|},
\end{equation*}
and by formula \eqref{eqa02} and the relation that $Z_{0,0}^i=\frac{\partial w_0^i}{\partial \lambda_0^i}\lambda_0^i R^i$,
\begin{equation*}
T_{2,0}=-c_{n,\gamma}\sum_{i'\neq i}A_2|p_{i'}-p_i|^{-(n-2\gamma)}(\lambda^i_0\lambda_0^{i'})^{\frac{n-2\gamma}{2}}[1+O(\lambda^i_0)^2].
\end{equation*}
Combining the above two estimates yields
\begin{equation*}\begin{split}
I_{1,0,0}&={c_{n,\gamma}}\Big[F(L_1)-\sum_{i'\neq i}A_2|p_{i'}-p_i|^{-(n-2\gamma)}(\lambda^i_0\lambda_0^{i'})^{\frac{n-2\gamma}{2}}\Big]+O(e^{-\frac{(n-2\gamma)L}{2}(1+\xi)})\nonumber\\
&=c_{n,\gamma}q_i\Big[q_i-A_2\sum_{i'\neq i}q_{i'}(R^iR^{i'})^{\frac{n-2\gamma}{2}}|p_i-p_{i'}|^{-(n-2\gamma)}\Big]e^{-\frac{(n-2\gamma)L}{2}}(1+o(1))\nonumber\\
&+O(e^{-\frac{(n-2\gamma)L}{2}(1+\xi)}).
\end{split}\end{equation*}

On the other hand, for  $l=1,\cdots,n$
\begin{equation*}
I_{1,0,l}=-c_{n,\gamma}\beta\int_{\R^n}(w_0^i)^{\beta-1}(\sum_{i'\neq i}w_0^{i'})Z_{0,0}^i\,dx+O(e^{-\frac{(n-2\gamma)L}{2}(1+\xi)}\lambda_0^i),
\end{equation*}
and recall that $Z_{j,l}^i=-\frac{\partial w_j^i}{\partial x_l}\lambda_j^i$, by the estimate \eqref{eqa03}, one has
\begin{eqnarray*}
I_{1,0,l}=c_{n,\gamma}A_3\lambda_0^i[\sum_{i'\neq i}\frac{(p_{i'}-p_i)_l}{|p_{i'}-p_i|^{n-2\gamma+2}}(\lambda_0^i\lambda_0^{i'})^{\frac{n-2\gamma}{2}}]+O(e^{-\frac{(n-2\gamma)L}{2}(1+\xi)}\lambda_0^i).
\end{eqnarray*}
Then combining the estimates for $\bar\beta_{j,l,1}^i,\bar\beta_{j,l,2}^i,\bar\beta_{j,l,3}^i$, we achieve the proof of the Lemma.

\end{proof}
\begin{remark}\label{remark401}
Fixed $i$, if we consider the approximate solution $\bar{u}_i^0$ with $a_{j}^i=0, r^i_j=0$ for $j=0,\cdots, \infty$, then the same estimates for $\bar\beta_{j,l}^i$ in the above lemma hold.
\end{remark}

\subsection{Estimates for general parameters}\label{sec402}
Next we study the coefficients $c_{j,l}^i$ for a general configuration space satisfying (\ref{para1}) and (\ref{para2}). Most of the estimates of the previous subsection will continue to hold, but we need to be especially careful when considering $\beta_{j,l,1}^i$. First, from  Remark \ref{remark401}, one can see that only the perturbations of $a_j^i, r_j^i$ will affect the numbers $\beta_{j,l}^i$, i.e, we can get the same estimates for $\beta_{j,l}^i$ for general parameters $(a_j^{i'}, r_j^{i'})$ satisfying (\ref{para1}) and (\ref{para2}) if $(a_j^i,r_j^i)=(0,0)$. So fixing $i=I$, we would like to study the estimates for $\beta_{j,l}^i$ when we vary the parameters $a_j^i, r_j^i$.  First we have the following estimates:

\begin{lemma}\label{lemma402}
Suppose that the parameters $R^i$ satisfy \eqref{parar}. Let $e_J^I$ be a vector in $\R^n$ and $r_J^I$ be a real number in $\R$. We let $X(t)$ be the configuration for which all the parameters $(a_j^i, r_j^i)$ are fixed to be $(0, 0)$ if $j\neq J$ and where $(a_J^I,R_J^I)=(te_J^I,R^I(1+tr_J^I))$. Assume that $|e_J^I|\leq C(\lambda_J^I)^2$ and $|r_J^I|\leq Ce^{-\tau t^I_J}$. We also let $w_{J,t}^I=w_{R^I(1+t r_J^I)}(x-te_J^I)$. Then we have the following:
\begin{itemize}
\item If $i=I$, $J\neq j$,
\begin{equation*}\begin{split}
\frac{\partial }{\partial t}&\Big|_{t=0}\int_{\R^n}S(\bar{u}_t)Z_{j,l}^i\,dx\\
&=\left\{\begin{array}{l}
-\dfrac{\partial }{\partial t}\big[c_{n,\gamma}F(|\log \frac{\lambda_J^i}{\lambda_j^i}|)\frac{\log \frac{\lambda_J^i}{\lambda_j^i}}{\big|\log \frac{\lambda_J^i}{\lambda_j^i}\big|}\big]+O(e^{-\frac{(n-2\gamma)L}{2}(1+\xi)}e^{-\tau t_J^I}e^{-\sigma |t^I_j-t^I_J|}) \mbox{ if }l=0,\\
-\dfrac{\partial }{\partial t}\big[c_{n,\gamma}A_0\lambda_j^i\min\{\frac{\lambda_J^i}{\lambda_j^i},\frac{\lambda_j^i}{\lambda_J^i}\}^{\frac{n-2\gamma}{2}}
\frac{te_l}{|\max\{\lambda_j^i,\lambda_J^i\}|^2}\big]\\
\qquad\qquad\qquad\qquad\quad+O(\lambda_j^ie^{-\frac{(n-2\gamma)L}{2}(1+\xi)}e^{-\tau t^I_J}e^{-\sigma |t^I_j-t^I_J|}) \ \mbox{ if }l=1,\cdots,n,
\end{array}
\right.
\end{split}\end{equation*}
for some $\sigma>0$ independent of $\tau$ and $L$.
\item If $i=I$, $J=j=0$,
\begin{equation*}\begin{split}
\frac{\partial }{\partial t}&\Big|_{t=0}\int_{\R^n}S(\bar{u}_t)Z_{j,l}^i\,dx\\
&=\left\{\begin{array}{l}
-c_{n,\gamma}\dfrac{\partial }{\partial t}\Big[A_2\displaystyle{\sum_{i'\neq i}}|p_{i'}-p_i|^{-(n-2\gamma)}(\lambda_0^i\lambda_0^{i'})^{\frac{n-2\gamma}{2}}
+F(|\log \frac{\lambda_0^i}{\lambda_{1}^i}|)\frac{\log \frac{\lambda_{1}^i}{\lambda_0^i}}{\big|\log \frac{\lambda_{1}^i}{\lambda_0^i}\big|}\Big]\\
\qquad\qquad\qquad\qquad\qquad\qquad\qquad\qquad\qquad\qquad+O(e^{-\frac{(n-2\gamma)L}{2}(1+\xi)}) \, \mbox{ if }l=0,\\
c_{n,\gamma}\lambda_0^i\dfrac{\partial }{\partial t}\Big[A_3\displaystyle{\sum_{i'\neq i}}\frac{p_{i'}-p_i}{|p_{i'}-p_i|^{n-2\gamma+2}}(\lambda_0^i\lambda^{i'}_0)^{\frac{n-2\gamma}{2}}
        +A_0\min\{\frac{\lambda_{0}^i}{\lambda_1^i},\frac{\lambda_1^i}{\lambda_{0}^i}\}^{\frac{n-2\gamma}{2}}
        \frac{te_l}{|\max\{\lambda_{j'}^i,\lambda_J^i\}|^2}\Big]
\\ \qquad\qquad\qquad\qquad\qquad\qquad\qquad\qquad\qquad\qquad+O(\lambda_0^ie^{-\frac{(n-2\gamma)L}{2}(1+\xi)}) \ \mbox{ if }l=1,\cdots,n.
\end{array}
\right.
\end{split}\end{equation*}

\item If $i=I$, $J=j\geq 1$,
\begin{equation*}\begin{split}
\frac{\partial }{\partial t}&\Big|_{t=0}\int_{\R^n}S(\bar{u}_t)Z_{j,l}^i\,dx\\
&=\left\{\begin{array}{l}
-c_{n,\gamma}\displaystyle{\sum_{j'\neq j}}\dfrac{\partial }{\partial t}\big[F(|\log \tfrac{\lambda_j^i}{\lambda_{j'}^i})\frac{\log \frac{\lambda_{j'}^i}{\lambda_j^i}}{|\log \frac{\lambda_{j'}^i}{\lambda_j^i}|}\big]+O(e^{-\frac{(n-2\gamma) L}{2}(1+\xi)}e^{-\tau t_j}) \ \mbox{ if }l=0,\\
c_{n,\gamma}\lambda_j^i \dfrac{\partial }{\partial t}
\big[A_0\displaystyle{\sum_{j'\neq J}}\min\{\frac{\lambda_{j'}^i}{\lambda_J^i},\frac{\lambda_J^i}{\lambda_{j'}^i}\}^{\frac{n-2\gamma}{2}}
\frac{te_l}{|\max\{\lambda_{j'}^i,\lambda_J^i\}|^2}\big]
+O(\lambda_j^ie^{-\frac{(n-2\gamma)L}{2}(1+\xi)}e^{-\tau t_J})\\
\qquad\qquad\qquad\qquad\qquad\qquad\qquad\qquad\qquad\qquad\qquad\qquad\qquad \mbox{ if }l=1,\cdots,n.
\end{array}
\right.
\end{split}\end{equation*}
\end{itemize}
\end{lemma}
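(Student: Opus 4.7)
The plan is to differentiate the identity $\int S(\bar u_0) Z_{j,l}^i\,dx = \bar\beta_{j,l}^i$ from Lemma \ref{lemma401} with respect to $t$. The only $t$-dependent ingredient in $\bar u_t$ is the single bubble $w_{J,t}^I$ (and, only when $J=0$, the correction $\chi_I \phi_I$ which depends on $t$ through $\lambda_0^I$); all other bubbles $w_j^i$ and all kernels $Z_{j,l}^i$ are independent of $t$. I would therefore retrace the three-part decomposition
\begin{equation*}
\int_{\R^n} S(\bar u_t) Z_{j,l}^i\,dx = \bar\beta_{j,l,1}^{i}(t) + \bar\beta_{j,l,2}^{i}(t) + \bar\beta_{j,l,3}^{i}(t)
\end{equation*}
from the proof of Lemma \ref{lemma401} and then apply $\partial_t|_{t=0}$ term by term.

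The dominant contribution comes from the interactions inside $\bar\beta_{j,l,1}^{i}(t) = \int S(\bar u_t^{(0)}) Z_{j,l}^i\,dx$, where $\bar u_t^{(0)}$ is the bubble sum without $\phi$. Step 2 of Lemma \ref{lemma401} identified the leading piece as $-c_{n,\gamma}\beta \sum_{(i',j')\neq(i,j)} \int (w_j^i)^{\beta-1} w_{j',t}^{i'} Z_{j,l}^i\,dx$ plus tails of the claimed order; only the $(i',j')=(I,J)$ summand depends on $t$. The appendix formulas \eqref{eqa01}--\eqref{eqa03} evaluate this distinguished integral in closed form: for $l=0$ it equals $c_{n,\gamma} F(|\log\lambda_J^i/\lambda_j^i|)\,\mathrm{sign}(\log\lambda_J^i/\lambda_j^i)$ up to lower-order error, while for $l=1,\dots,n$ it equals $c_{n,\gamma} A_0 \lambda_j^i \min\{\lambda_J^i/\lambda_j^i,\lambda_j^i/\lambda_J^i\}^{(n-2\gamma)/2}(te_l)/|\max\{\lambda_j^i,\lambda_J^i\}|^2$. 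Since $\lambda_J^i$ depends on $t$ through $R_J^I = R^I(1+t r_J^I)$, differentiating these closed expressions directly yields the leading terms stated in the three cases of the Lemma. In the base case $J=j=0$, the inter-singularity interactions $\int (w_0^i)^{\beta-1} w_0^{i'} Z_{0,l}^i\,dx$ with $i'\neq i$ also depend on $t$ via $\lambda_0^I$ and account for the $A_2$ and $A_3$ contributions (through the expansion of $|p_{i'}-p_i|^{-(n-2\gamma)}$ used in Lemma \ref{lemma401}). In the case $J=j\geq 1$ those inter-singularity integrals carry an extra factor $(\lambda_0^I)^{(n-2\gamma)/2}$, hence are subleading and absorbed into the $O(e^{-\frac{(n-2\gamma)L}{2}(1+\xi)}e^{-\tau t_J^I})$ remainder.

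The main technical obstacle is controlling $\partial_t|_{t=0}\bar\beta_{j,l,2}^i(t)$ and $\partial_t|_{t=0}\bar\beta_{j,l,3}^i(t)$ with the claimed gain factor $e^{-\tau t_J^I}$ (and, when $i=I$ and $J\neq j$, the additional $e^{-\sigma|t_j^I-t_J^I|}$ factor from having to \emph{transport} the concentration of $Z_{J,\cdot}^I$ to the support of $Z_{j,l}^I$). This requires that the perturbation $\phi$ constructed in Proposition \ref{proposition301} depends smoothly on $t$ with derivative bounds $\|\partial_t \phi\|_{*} \leq Ce^{-\frac{(n-2\gamma)L}{4}(1+\xi)} e^{-\tau t_J^I}$. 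One obtains these by differentiating the fixed-point equation $\phi = \mathbb L^{-1}(S(\bar u_t)) + c_{n,\gamma}\mathbb L^{-1}(N(\phi))$ in $t$: the derivative $\partial_t \phi$ solves a linear problem of the same type, and the uniform a priori estimate of Lemma \ref{lemma301} applied to the improved source $\partial_t S(\bar u_t)$ (which carries the extra $e^{-\tau t_J^I}$ coming from the size of the varied parameter) gives the required bound. Plugging $\partial_t \phi$ into the explicit interaction integrals that define $\bar\beta_{j,l,2}^i,\bar\beta_{j,l,3}^i$ and estimating each region $B(p_i,1)$, $B(p_{i'},1)$, and $\R^n\setminus\cup_i B(p_i,1)$ as in Step 1 of the proof of Lemma \ref{lemma401} yields the stated remainder. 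The two remaining ingredients---the orthogonality relations \eqref{orthogonality1}--\eqref{orthogonality2} to isolate the correct leading pieces, and the sign of the contribution (from $\mathrm{sign}(\log\lambda_J^i/\lambda_j^i)$)---follow mechanically.
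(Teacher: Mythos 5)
Your proposal misreads the statement in a way that changes the whole shape of the argument. Lemma 4.2 concerns the quantity $\int_{\R^n} S(\bar u_t)Z_{j,l}^i\,dx$ where $S(\bar u_t)=(-\Delta)^\gamma\bar u_t-c_{n,\gamma}\bar u_t^\beta$. This is exactly the piece denoted $\bar\beta_{j,l,1}^i$ in the proof of Lemma 4.1; it does not involve the correction function $\phi$ from Proposition 3.5 at all. You instead identify $\int S(\bar u_0)Z_{j,l}^i\,dx$ with $\bar\beta_{j,l}^i=\int S(\bar u_0+\phi)Z_{j,l}^i\,dx$ and then introduce the full three-part split $\bar\beta_{j,l,1}^i+\bar\beta_{j,l,2}^i+\bar\beta_{j,l,3}^i$. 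Because of this, the bulk of your argument — differentiating the fixed-point equation for $\phi$ in $t$, proving a bound $\|\partial_t\phi\|_*\leq Ce^{-\frac{(n-2\gamma)L}{4}(1+\xi)}e^{-\tau t_J^I}$, and estimating $\partial_t\bar\beta_{j,l,2}^i$, $\partial_t\bar\beta_{j,l,3}^i$ — addresses a different statement (it belongs to Section 5, where the analogous bounds for the full $\beta_{j,l}^i$ are developed, cf. Lemmas 5.1--5.4). None of that machinery is needed here, and including it obscures what the lemma actually asserts. The correct starting point is that, for $J\neq j$, $\partial_t S(\bar u_t)=c_{n,\gamma}\beta\big[(w_{J,t}^i)^{\beta-1}-\bar u_t^{\beta-1}\big]\partial_t w_{J,t}^i$ because $\partial_t w_{J,t}^i$ lies in the kernel of the linearization at $w_{J,t}^i$; one then splits off the term $(w_j^i)^{\beta-1}$ and applies the Appendix integrals \eqref{eqa01}--\eqref{eqa04}.

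A second gap: you state that ``all kernels $Z_{j,l}^i$ are independent of $t$.'' This fails precisely when $J=j$, where $Z_{j,l}^i=Z_{J,l}^I$ is built from the varied bubble $w_{J,t}^I$ and so depends on $t$; the paper writes $Z_{j,t,l}^i$ there and must include the extra term $\int_{\R^n}S(\bar u_t)\,\partial_t Z_{j,t,l}^i\,dx$, and it uses the linearized equation satisfied by $Z_{j,t,l}^i$ to cancel the contributions coming from $\phi_{L_i}$. Your proposal omits this contribution entirely, so the $J=j$ cases of the lemma would not be reached. (A related, more minor slip: the cutoff piece $\chi_I\phi_I$ in $\bar u$ depends on $L_I$ and $R^I$ but not on $(a_J^I,R_J^I)$, so it is $t$-independent even when $J=0$.)
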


\begin{proof}
Fix $i=I$.
We first consider the case in which $J\neq j$. We have
\begin{equation*}\begin{split}
\frac{d}{dt}\Big|_{t=0}\int_{\R^n}S(\bar{u}_t)Z_{j,l}^i\,dx
&=c_{n,\gamma}\beta\int_{\R^n}[(w_{J,t}^i)^{\beta-1}-\bar{u}_t^{\beta-1}]\frac{\partial w_{J,t}^i}{\partial t}Z_{j,l}^i\,dx\\
&=-c_{n,\gamma}\beta\int_{\R^n}(w_j^i)^{\beta-1}\frac{\partial w_{J,t}^i}{\partial t}Z_{j,l}^i\,dx
\\&\quad+c_{n,\gamma}\beta\int_{\R^n}[(w_j^i)^{\beta-1}+(w_{J,t}^{i})^{\beta-1}-\bar{u}_t^{\beta-1}]\frac{\partial w_{J,t}^i}{\partial t}Z_{j,l}^i\,dx\\
&=:M_1+M_2.
\end{split}\end{equation*}
From the proof of the appendix, more precisely, (\ref{eqa01}) for $l=0$ and (\ref{eqa04}) for $l=1,\ldots,n$, one can find that
\begin{equation*}\begin{split}
M_1&=-c_{n,\gamma}\beta\int_{\R^n}(w_j^i)^{\beta-1}\frac{\partial w_{J,t}^i}{\partial t}Z_{j,l}^i\,dx=-c_{n,\gamma}\beta\frac{\partial }{\partial t}\int_{\R^n}(w_j^i)^{\beta-1}Z_{j,l}^iw_{J,t}^i\,dx\\
&=\left\{\begin{array}{l}
-\dfrac{\partial }{\partial t}\Big[c_{n,\gamma}F(|\log \frac{\lambda_J^i}{\lambda_j^i}|)\frac{\log \frac{\lambda_J^i}{\lambda_j^i}}{|\log \frac{\lambda_J^i}{\lambda_j^i}|}\Big]
\ \mbox{ if }l=0,\\
-\dfrac{\partial }{\partial t}\Big[c_{n,\gamma}A_0\lambda_j^i\min\{\frac{\lambda_J^i}{\lambda_j^i},\frac{\lambda_j^i}{\lambda_J^i}\}^{\frac{n-2\gamma}{2}}
\frac{te_l}{|\max\{\lambda_j^i,\lambda_J^i\}|^2}\Big]
\end{array}
\right.
\end{split}\end{equation*}
for a constant $A_0>0$.
Moreover,
\begin{equation*}\begin{split}
M_2&=c_{n,\gamma}\beta\int_{\R^n}[(w_j^i)^{\beta-1}+(w_{J,t}^{i})^{\beta-1}-\bar{u}_t^{\beta-1}]\frac{\partial w_{J,t}^i}{\partial t}Z_{j,l}^i\,dx\\
&=\left\{\begin{array}{l}
O(e^{-\frac{(n-2\gamma)L}{2}(1+\xi)})e^{-\tau t^I_J}e^{-\sigma |t^I_j-t^I_J|} \ \mbox{ for }l=0,\\
O(\lambda_j^ie^{-\frac{(n-2\gamma)L}{2}(1+\xi)})e^{-\tau t^I_J}e^{-\sigma |t^I_j-t^I_J|} \  \mbox{ for }l=1,\cdots,n,
\end{array}
\right.
\end{split}\end{equation*}
which proves the assertion when $J\neq j$.

Now we consider the case $J=j$, for which we have
\begin{equation*}\begin{split}
\frac{d}{dt}\Big|_{t=0}\int_{\R^n}S(\bar{u}_t)Z_{j,t,l}^i\,dx
&=\int_{\R^n}\frac{\partial }{\partial t}S(\bar{u}_t)Z_{j,t,l}^i\,dx+\int_{\R^n} S(\bar{u}_t)\frac{\partial }{\partial t}Z_{j,t,l}^i\,dx\\
&=\int_{\R^n}\big[(-\Delta)^\gamma-c_{n,\gamma}\beta(w_{J,t}^i)^{\beta-1}\big]\big[\sum_{j'\neq J }w_{j'}^i+\sum_{i'\neq i}w_j^{i'}+\phi_{L_i}\big]\frac{\partial }{\partial t}Z_{j,t,l}^i\,dx\\
&\quad-\beta(\beta-1)c_{n,\gamma}\int_{\R^n}(w_{J,t}^i)^{\beta-2}\big[\sum_{j'\neq J }w_{j'}^i+\sum_{i'\neq i}w_j^{i'}+\phi_{L_i}\big]\frac{\partial w_{J,t}^i}{\partial t}Z_{j,t,l}^i\,dx\\
&\quad+\left\{\begin{array}{l}
O(e^{-\frac{(n-2\gamma)L}{2}(1+\xi)}e^{-\tau t^I_j})  \ \mbox{ if }l=0,\\
O(\lambda_j^i e^{-\frac{(n-2\gamma)L}{2}(1+\xi)}e^{-\tau t^I_j})\ \mbox{ if }l\geq 1.
\end{array}
\right.
\end{split}\end{equation*}
From the equation satisfied by $Z_{t,l,i}$, and taking derivative with respect to $t$, one can cancel the terms containing $\phi_{L_i}$, which yields
\begin{equation*}\begin{split}
\frac{d}{dt}\Big|_{t=0}\int_{\R^n}S(\bar{u}_t)Z_{j,t,l}^i\,dx
&=\int_{\R^n}-c_{n,\gamma}\beta(w_{J,t}^i)^{\beta-1}\big[\sum_{j'\neq J }w_{j'}^i+\sum_{i'\neq i}w_j^{i'}\big]\frac{\partial }{\partial t}Z_{j,t,l}^i\,dx\\
&\quad-\beta(\beta-1)c_{n,\gamma}\int_{\R^n}(w_{J,t}^i)^{\beta-2}\big[\sum_{j'\neq J }w_{j'}^i+\sum_{i'\neq i}w_j^{i'}\big]\frac{\partial w_{J,t}^i}{\partial t}Z_{j,t,l}^i\,dx\\
&\quad+\left\{\begin{array}{l}
O(e^{-\frac{(n-2\gamma)L}{2}(1+\xi)}e^{-\tau t^I_j}) \ \mbox{ if }l=0,\\
O(\lambda_j^i e^{-\frac{(n-2\gamma)L}{2}(1+\xi)}e^{-\tau t^I_j}) \ \mbox{ if }l\geq 1,
\end{array}
\right.\\
&=-c_{n,\gamma}\frac{\partial }{\partial t}\int_{\R^n}\beta(w_{J,t}^i)^{\beta-1}Z_{j,t,l}^i\big[\sum_{j'\neq J }w_{j'}^i+\sum_{i'\neq i}w_j^{i'}\big]\,dx\\
&\quad+\left\{\begin{array}{l}
O(e^{-\frac{(n-2\gamma)L}{2}(1+\xi)}e^{-\tau t^I_j}) \  \mbox{ if }l=0,\\
O(\lambda_j^i e^{-\frac{(n-2\gamma)L}{2}(1+\xi)}e^{-\tau t^I_j}) \ \mbox{ if }l\geq 1,
\end{array}
\right.\\
&=:N_1+N_2+O(\cdots).\end{split}
\end{equation*}
Similar to the estimates before, one can get that for $l=1,\cdots,n$, by estimate (\ref{eqa04}) in the Appendix,
\begin{equation*}\begin{split}
N_1&=\int_{\R^n}\beta (w_{J,t}^i)^{\beta-1}Z_{j,t,l}^i\sum_{j'\neq J}w_{j'}^i\,dx\\
&=-A_0\lambda_j^i\sum_{j'\neq J}\min\Big\{\frac{\lambda_{j'}^i}{\lambda_J^i},\frac{\lambda_J^i}{\lambda_{j'}^i}\Big\}^{\frac{n-2\gamma}{2}}
\frac{te_l}{|\max\{\lambda_{j'}^i,\lambda_J^i\}|^2}+O(\lambda_j^i
e^{-\frac{(n-2\gamma)L}{2}}(1+\xi)e^{-\tau t^I_J}),
\end{split}\end{equation*}
and  from (\ref{eqa03}),
\begin{equation*}\begin{split}
N_2&=\int_{\R^n}\beta (w_{J,t}^i)^{\beta-1}Z_{j,t,l}^i\sum_{i'\neq i}w_{j}^{i'}\,dx\\
&=\left\{\begin{array}{l}
-\lambda_0^i\Big[A_3\displaystyle {\sum_{i'\neq i}}\frac{(p_{i'}-p_i)_l}{|p_{i'}-p_i|^{n-2\gamma+2}}
(\lambda_0^i\lambda^{i'}_0)^{\frac{n-2\gamma}{2}}
+O(e^{-\frac{(n-2\gamma)L}{2}(1+\xi)})\Big] \ \mbox{ if }J=0,\\
O(\lambda_j^ie^{-\frac{(n-2\gamma)L}{2}(1+\xi)}e^{-\tau t^I_J}) \ \mbox{ if }J\geq 1.
\end{array}
\right.
\end{split}\end{equation*}
On the other hand, for $l=0$, by (\ref{eqa01}),
\begin{equation*}\begin{split}
N_1&=\int_{\R^n}\beta (w_{J,t}^i)^{\beta-1}Z_{j,t,l}^i\sum_{j'\neq J}w_{j'}^i\,dx=\sum_{j'\neq j}F\big(\big|\log \tfrac{\lambda_j^i}{\lambda_{j'}^i}\big|\big)\frac{\log \frac{\lambda_{j'}^i}{\lambda_j^i}}{|\log \frac{\lambda_{j'}^i}{\lambda_j^i}|},
\end{split}\end{equation*}
and using (\ref{eqa02}),
\begin{equation*}\begin{split}
N_2&=\int_{\R^n}\beta (w_{J,t}^i)^{\beta-1}Z_{j,t,l}^i\sum_{i'\neq i}w_{j}^{i'}\,dx\\
&=\left\{\begin{array}{l}
A_2\displaystyle{\sum_{i'\neq i}}|p_{i'}-p_i|^{-(n-2\gamma)}(\lambda_0^i\lambda_0^{i'})^{\frac{n-2\gamma}{2}}+O(e^{-\frac{(n-2\gamma)L}{2}(1+\xi)}) \ \mbox{ if }J=0,\\
e^{-\frac{(n-2\gamma)L}{2}(1+\xi)}e^{-\tau t^I_J} \ \mbox{ if }J\geq 1.
\end{array}
\right.
\end{split}\end{equation*}
Combining all the above estimates, the proof of the Lemma is completed.

\end{proof}

From Lemma \ref{lemma401} and Lemma \ref{lemma402}, we find the decay estimate for the $\beta_{j,l}^i$ for general parameters $a_j^i, r_j^i$ satisfying conditions (\ref{para1}) and \eqref{para2}:

\begin{lemma}\label{lemma403}
For the parameters $(a_j^i, R_j^i)$ satisfying \eqref{para1} and \eqref{para2}, we have the following estimates:

\begin{eqnarray*}
&&\beta_{0,0}^i=-c_{n,\gamma}q_i\Big[A_2\sum_{i'\neq i}|p_{i'}-p_i|^{-(n-2\gamma)}(R_0^iR_0^{i'})^{\frac{n-2\gamma}{2}}q_{i'}
-\Big(\frac{R_1^i}{R_0^i}\Big)^{\frac{n-2\gamma}{2}}q_i\Big]e^{-\frac{(n-2\gamma)L}{2}}(1+o(1))\nonumber\\
&&\qquad+O(e^{-\frac{(n-2\gamma)L}{2}(1+\xi)}),\\
&&\beta_{0,l}^i=c_{n,\gamma}\lambda_0^i\Big[A_3\sum_{i'\neq i}\frac{(p_{i'}-p_i)_l}{|p_{i'}-p_i|^{n-2\gamma+2}}(R_0^iR_0^{i'})^{\frac{n-2\gamma}{2}}q_{i'}
+A_0\Big(\frac{R_1^i}{R_0^i}\Big)^{\frac{n-2\gamma}{2}}\frac{a_0^i-a_1^i}{(\lambda_0^i)^2} q_i \Big]q_ie^{-\frac{n-2\gamma}{2}L}\nonumber\\
&&\qquad+O(\lambda_0^ie^{-\frac{(n-2\gamma)L}{2}(1+\xi)}), \text{ for }l\geq 1,\\
&&\beta_{j,l}^i=\left\{\begin{array}{l}
O(e^{-\frac{(n-2\gamma)L}{2}(1+\xi)}e^{-\sigma t^i_j}+e^{-\frac{(n-2\gamma)L}{2}}e^{-\tau t_{j-1}^i}),\ l=0,\\
O(\lambda_j^i e^{-\frac{(n-2\gamma)L}{2}(1+\xi)}e^{-\sigma t_j^i}+\lambda_j^ie^{-\frac{(n-2\gamma)L}{2}}e^{-\tau t_{j-1}^i}),\ l\geq 1,
\end{array}
\right. \mbox{ for }j\geq 1.
\end{eqnarray*}
where $\sigma$ is obtained in Lemma \ref{lemma401}.
\end{lemma}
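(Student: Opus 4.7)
The plan is to combine Lemma \ref{lemma401} with Lemma \ref{lemma402} via a homotopy (fundamental theorem of calculus) in the perturbation parameters. Keep the decomposition $\beta_{j,l}^i=\beta_{j,l,1}^i+\beta_{j,l,2}^i+\beta_{j,l,3}^i$ from the proof of Lemma \ref{lemma401}. The linearization piece $\beta_{j,l,2}^i$ and the nonlinear piece $\beta_{j,l,3}^i$ depend on $\phi$ only through its weighted norm and on $\bar{u}$ through estimates that are stable under the perturbations \eqref{para1}-\eqref{para2}; therefore they continue to obey the same bound $|\beta_{j,l,2}^i|+|\beta_{j,l,3}^i|\leq C(\lambda_j^i)^{\delta_l}e^{-\frac{(n-2\gamma)L}{2}(1+\xi)}e^{-(\gamma_1+\frac{n-2\gamma}{2})t_j^i}$ (with $\delta_l=0$ for $l=0$ and $\delta_l=1$ otherwise) as in the baseline case. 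The entire task thus reduces to tracking $\beta_{j,l,1}^i=\int_{\mathbb{R}^n} S(\bar u)Z_{j,l}^i\,dx$ as the parameters move from the baseline $(a_j^i,r_j^i)=(0,0)$ to the general configuration.

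To compute this I would write
\begin{equation*}
\beta_{j,l,1}^i(\text{gen})=\bar\beta_{j,l,1}^i+\int_0^1 \frac{d}{dt}\beta_{j,l,1}^i(tP)\,dt,
\end{equation*}
where $P$ parametrizes the full perturbation $(a_{j'}^{i'},r_{j'}^{i'})$, and apply Lemma \ref{lemma402} to evaluate $\frac{d}{dt}\beta_{j,l,1}^i$ one perturbation at a time (linearity of the dominant contributions is enough at this order, absorbing cross terms into the $O(\cdot)$ tail). For $(j,l)=(0,0)$, the baseline contributes the $A_2$-balancing expression with $R_0^i,R_0^{i'}$ in place of $R^{i,b},R^{i',b}$ (Remark \ref{remark401}) together with the radial self-interaction $F(L_i)$; the $r_0^i,r_1^i$ derivatives from Lemma \ref{lemma402} replace $F(L_i)$ by $F(L_i-\log(R_1^i/R_0^i))$, and since $F(s)\sim c\,e^{-\frac{n-2\gamma}{2}s}$, this produces exactly the factor $(R_1^i/R_0^i)^{\frac{n-2\gamma}{2}}$ in front of $q_i^2\,e^{-\frac{(n-2\gamma)L}{2}}$. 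For $(j,l)=(0,l)$ with $l\geq1$, the baseline odd-symmetry in $x-p_i$ kills the self-interaction with bubble $1$, while the $a_0^i$ derivative from Lemma \ref{lemma402} produces the $A_0\,\bigl((a_0^i-a_1^i)/(\lambda_0^i)^2\bigr)$ contribution; perturbing $r_1^i$ simultaneously turns the coefficient into $(R_1^i/R_0^i)^{\frac{n-2\gamma}{2}}$, while the $A_3$ inter-point term comes unchanged from the baseline.

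For $j\geq 1$ the baseline estimate of Lemma \ref{lemma401} already provides the first summand in the bound. The new errors are produced by the perturbation of neighbouring bubbles: applying Lemma \ref{lemma402} with $J=j\pm1$ and bounding $|r_J^I|,|a_J^I|\leq Ce^{-\tau t_J^I}$, the largest surviving contribution comes from $J=j-1$ (since $t_{j-1}^i<t_j^i$ and the interaction decay $e^{-\sigma|t_j^i-t_{j-1}^i|}$ only costs a fixed exponent in $L$), giving the advertised $e^{-\frac{(n-2\gamma)L}{2}}e^{-\tau t_{j-1}^i}$ term. All remaining perturbations ($J=j+1,j+2,\ldots$ and cross-point perturbations) are shown to be strictly smaller by combining the $e^{-\tau t_J^I}$ parameter decay with the $e^{-\sigma|t_j^i-t_J^i|}$ interaction decay, so they are absorbed into the stated $O(\cdot)$.

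The main technical obstacle is bookkeeping the exponential rates and verifying that the $(R_1^i/R_0^i)^{\frac{n-2\gamma}{2}}$ and $(a_0^i-a_1^i)/(\lambda_0^i)^2$ factors emerge exactly, rather than being buried inside the error. Concretely this means (i) identifying precisely that $\frac{\partial}{\partial t}[F(|\log\lambda_1^i/\lambda_0^i|)]$ with the full chain rule in $R_1^i,R_0^i$ reassembles into the ratio $(R_1^i/R_0^i)^{\frac{n-2\gamma}{2}}$ after integration in $t\in[0,1]$, and (ii) checking that the $\tau$-decay on $r_j^i,\tilde a_j^i$ always wins against the $\sigma$-decay lost in each interaction, so that all secondary perturbations sit inside the $O(e^{-\frac{(n-2\gamma)L}{2}(1+\xi)}\cdot)$ remainder. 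Once these rates are aligned, summing the derivative contributions along the homotopy yields the claimed formulas term by term.
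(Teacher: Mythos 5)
Your proposal follows essentially the same route as the paper: you keep the decomposition $\beta_{j,l}^i=\beta_{j,l,1}^i+\beta_{j,l,2}^i+\beta_{j,l,3}^i$ from the proof of Lemma~\ref{lemma401}, observe that $\beta_{j,l,2}^i$ and $\beta_{j,l,3}^i$ inherit the baseline bounds because the relevant norms of $\phi$ and $\bar u$ are stable under the perturbations \eqref{para1}--\eqref{para2}, and compute $\beta_{j,l,1}^i$ by integrating the derivative estimates of Lemma~\ref{lemma402} along the homotopy from $(a_j^i,r_j^i)=(0,0)$ to the general configuration. Your bookkeeping of the leading terms --- the $(R_1^i/R_0^i)^{\frac{n-2\gamma}{2}}$ factor arising from $F$ evaluated at the shifted argument $L_i-\log(R_1^i/R_0^i)$, the $A_0(a_0^i-a_1^i)/(\lambda_0^i)^2$ term from the $a_0^i,a_1^i$ variations, and the identification of the $J=j-1$ interaction as the dominant new error for $j\geq1$, yielding $e^{-\frac{(n-2\gamma)L}{2}}e^{-\tau t_{j-1}^i}$ --- matches what the paper does (more tersely).
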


\begin{proof}
Using the notation in the previous subsection, we first estimate $\beta_{j,l,1}^i$. Using Lemma \ref{lemma401} and Lemma \ref{lemma402}, and integrating in $t$ from $0$ to $1$, varying the parameters $(a,R)=(0, R^i)$ to  $(a_j^i, R_j^i)$, and using the estimates satisfied by the parameters. The integration yields
\begin{equation*}\begin{split}
\beta_{0,0,1}^i&=-c_{n,\gamma}q_i\Big[A_2\sum_{i'\neq i}|p_{i'}-p_i|^{-(n-2\gamma)}(R_0^iR_0^{i'})^{\frac{n-2\gamma}{2}}q_{i'}-\Big(\frac{R_1^i}{R_0^i}\Big)^{\frac{n-2\gamma}{2}}q_i\Big]\\
&\,\cdot e^{-\frac{(n-2\gamma)L}{2}}(1+o(1))+O(e^{-\frac{(n-2\gamma)L}{2}(1+\xi)}),\\
\beta_{0,l,1}^i&=c_{n,\gamma}\lambda_0^i\Big[A_3\sum_{i'\neq i}\frac{(p_{i'}-p_i)_l}{|p_{i'}-p_i|^{n-2\gamma+2}}(R_0^iR_0^{i'})^{\frac{n-2\gamma}{2}}q_{i'}
+A_0\Big(\frac{R_1^i}{R_0^i}\Big)^{\frac{n-2\gamma}{2}}\frac{a_0^i-a_1^i}{(\lambda_0^i)^2} q_i \Big]q_ie^{-\frac{n-2\gamma}{2}L}\\
&+O(\lambda_0^ie^{-\frac{(n-2\gamma)L}{2}(1+\xi)}), \text{ for }l\geq 1,\\
\beta_{j,l,1}^i&=\left\{\begin{array}{l}
O(e^{-\frac{(n-2\gamma)L}{2}(1+\xi)}e^{-\sigma t^i_j}+e^{-\frac{(n-2\gamma)L}{2}}e^{-\tau t_{j-1}^i}) \ \mbox{ if } l=0,\\
O(\lambda_j^i e^{-\frac{(n-2\gamma)L}{2}(1+\xi)}e^{-\sigma t^i_j}+\lambda_j^ie^{-\frac{(n-2\gamma)L}{2}}e^{-\tau t_{j-1}^i})\ \mbox{ if } l\geq 1,
\end{array}
\right. \mbox{ for }j\geq 1.
\end{split}\end{equation*}
Similarly to the estimates in subsection \ref{sec401}, $\beta_{j,l,2}^i$ and $\beta_{j,l,3}^i$ can be bounded by
\begin{equation*}
|\beta_{j,l,2}^i|+|\beta_{j,l,3}^i|\leq \left\{\begin{array}{l}
Ce^{-\frac{(n-2\gamma)L}{2}(1+\xi)}e^{-\sigma t^i_j},\\
C\lambda_j^ie^{-\frac{(n-2\gamma)L}{2}(1+\xi)}e^{-\sigma t_j^i}.
\end{array}
\right.
\end{equation*}
Hence we get the desired bounds on $\beta_{j,l}^i$.
\end{proof}

\section{Derivatives of the coefficients $\beta_{j,l}^i$ with respect to the variation of $\{a_{j,l}^i\}$ and $\{r_{j,l}^i\}$}\label{sec5}

Here study the derivatives of the coefficients $\beta_{j,l}^i$ with respect to the parameters $\{a_{j,l}^i\}$ and $\{r_{j,l}^i\}$. As in the previous remark, we only need to care about the perturbation of $a_j^i, r_j^i$. Thus we first consider the derivatives of $\beta_{j,l}^i$ with respect to $a_j^i,r_j^i $ for the special configuration space that $a_j^i, r_j^i=0$ for fixed $i$. For this, we need to  consider the variation of $\phi$ with respect to these parameters.
\subsection{Derivatives of $\beta_{j,l}^i$ for $a_j^i,r_j^i$ all equal to zero}

In this section, we fix $i$ and let $\bar{u}^0_i$ to be the approximate solution with $a_j^i,r_j^i=0$. Given $\phi $ as in Proposition \ref{proposition301} for the approximate solution $\bar{u}=\bar{u}_0^i$, we introduce the operator
\begin{equation}\label{eq501}
\tilde{\mathbb L}=(-\Delta)^\gamma -c_{n,\gamma}\beta(\bar{u}_i^0+\phi)^{\beta-1}.
\end{equation}
Denote by $\xi_{j,0}^i=r_j^i$, and $ \xi_{j,l}^i=a_{j,l}^i$ for $l=1,\cdots,n$.

\begin{lemma}\label{lemma501}
For $L$ large, let $\bar{u}_i^0$ and $\phi$ be as above. Then we have the following estimates on $\frac{\partial \phi}{\partial \xi_{j,l}^i} $ near $p_i$:
\begin{equation}\label{equation501}
\Big|\frac{\partial \phi}{\partial \xi_{j,l}^i}\Big|\leq
\left\{\begin{array}{l}
Ce^{-\frac{(n-2\gamma)L}{4}(1+\xi)}|x-p_i|^{-\frac{n-2\gamma}{2}}e^{-\sigma |t^i-t^i_j|} \ \mbox{ for }l=0,\\
\frac{C}{\lambda_j^i}e^{-\frac{(n-2\gamma)L}{4}(1+\xi)}|x-p_i|^{-\frac{n-2\gamma}{2}}e^{-\sigma |t^i-t^i_j|} \ \mbox{ for }l\geq 1,
\end{array}
\right.
\ \mbox{ in }B(p_i,1).
\end{equation}

\end{lemma}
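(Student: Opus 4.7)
The strategy is to differentiate the equation satisfied by $\phi$ with respect to the perturbation parameter $\xi_{j,l}^i$ and then invert the linearized operator $\tilde{\mathbb{L}}$ in a suitably refined weighted norm that captures the \emph{localization} around the $j$-th bubble. Starting from the identity
\begin{equation*}
(-\Delta)^\gamma(\bar{u}_i^0+\phi)-c_{n,\gamma}(\bar{u}_i^0+\phi)^\beta=\sum_{i',j',l'} c_{j',l'}^{i'}(w_{j'}^{i'})^{\beta-1}Z_{j',l'}^{i'},
\end{equation*}
I would differentiate in $\xi_{j,l}^i$ and move the terms not involving $\partial_\xi\phi$ to the right-hand side, obtaining
\begin{equation*}
\tilde{\mathbb{L}}\big(\partial_{\xi_{j,l}^i}\phi\big)=-\tilde{\mathbb{L}}\big(\partial_{\xi_{j,l}^i}\bar{u}_i^0\big)+\sum_{i',j',l'}\partial_{\xi_{j,l}^i}\!\left[c_{j',l'}^{i'}(w_{j'}^{i'})^{\beta-1}Z_{j',l'}^{i'}\right].
\end{equation*}
The key observation is that $\partial_{\xi_{j,l}^i}\bar{u}_i^0$ is, up to normalization, precisely $Z_{j,l}^i$, and since $Z_{j,l}^i$ lies in the kernel of the linearization about a single bubble $w_j^i$, the quantity $\tilde{\mathbb{L}}(Z_{j,l}^i)$ is a small error that decays rapidly away from the $j$-th bubble. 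Its size can be estimated just as the $\beta_{j',l',1}^i$ terms were estimated in Lemma~\ref{lemma401}, gaining an extra factor localized around $t_j^i$.

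Next I would treat the orthogonality conditions. Differentiating $\int \phi\,(w_{j'}^{i'})^{\beta-1}Z_{j',l'}^{i'}\,dx=0$ with respect to $\xi_{j,l}^i$ produces modified orthogonality relations for $\partial_{\xi_{j,l}^i}\phi$ that are \emph{almost} orthogonal to the kernel functions, the defect being controlled by $\|\phi\|_*$ multiplied by derivatives of the kernel. I would then set $\psi:=\partial_{\xi_{j,l}^i}\phi-\sum b_{j',l'}^{i'}Z_{j',l'}^{i'}$, choosing the coefficients $b_{j',l'}^{i'}$ so that $\psi$ satisfies the exact orthogonality conditions of Lemma~\ref{lemma301}. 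Inverting the Toeplitz-type system that determines the $b_{j',l'}^{i'}$ (as in the proof of Lemma~\ref{lemma301}) shows these coefficients are small and localized around $(i,j)$, and hence they already satisfy an estimate of the desired form.

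To close the argument on $\psi$, I would apply a refined version of the a priori bound (\ref{eq305}), measured not in the global weighted norm $\|\cdot\|_*$ but in a weighted norm carrying the extra localizing factor $e^{-\sigma|t^i-t_j^i|}$. Concretely, I would introduce the norm
\begin{equation*}
\|u\|_{*,j,i}:=\sup_{x\in B(p_i,1)}\Big|x-p_i|^{-\gamma_1}e^{\sigma|t^i-t_j^i|}u(x)\Big|+\text{(analogous piece away from }\Sigma\text{)},
\end{equation*}
and prove the corresponding invertibility estimate by repeating verbatim the contradiction/Green's-representation argument of Lemma~\ref{lemma301}; the blow-up analysis near each bubble still gives convergence to a trivial solution of the linearized bubble equation, and the new weight only alters the decay we track, not the structural step using non-degeneracy. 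The right-hand side of the equation for $\psi$ is then estimated in this refined dual norm using the exponential localization of $\tilde{\mathbb{L}}(Z_{j,l}^i)$ and of the differentiated Lagrange multiplier terms (obtained from Lemma~\ref{lemma402}), producing the bound (\ref{equation501}) after undoing the scaling $Z_{j,l}^i=O(1/\lambda_j^i)\partial/\partial a_{j,l}^i\,w_j^i$ for $l\geq 1$.

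The main obstacle is precisely the construction and justification of this refined weighted norm $\|\cdot\|_{*,j,i}$: one must verify that the Green's function estimates for $(-\Delta)^\gamma$ are compatible with the localizing weight $e^{-\sigma|t^i-t_j^i|}$, i.e.\ that convolution against $|x-y|^{-(n-2\gamma)}$ does not destroy the exponential decay away from the $j$-th bubble. This amounts to choosing $\sigma>0$ small enough relative to $\tfrac{n-2\gamma}{2}$ so that the natural decay of each bubble dominates the weight, and then repeating the region-by-region Green's estimates of Lemma~\ref{lemma301} with the additional factor tracked carefully. Once this is in place, the contraction argument used in Proposition~\ref{proposition301} combined with the rapid decay of the source term gives (\ref{equation501}), with the factor $1/\lambda_j^i$ for $l\geq 1$ arising from the normalization in the definition of $Z_{j,l}^i$.
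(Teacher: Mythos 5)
Your overall strategy — differentiate the fixed-point equation in $\xi_{j,l}^i$, restore the orthogonality conditions by adding a small linear combination of the kernel functions $Z_{j',l'}^{i'}$, and then invert the linearization in a weighted norm carrying the localizing factor $e^{\sigma|t^i-t_j^i|}$ — is exactly the paper's. The gap is in the algebraic part of the weight. You reuse the exponent $-\gamma_1$ from $\|\cdot\|_*$, but the paper changes it: in their $\|\cdot\|_{*_\sigma}$ the algebraic weight near $p_i$ is $|x-p_i|^{\frac{n-2\gamma}{2}}$ (and in the dual $\|\cdot\|_{{**}_\sigma}$ it is $|x-p_i|^{\frac{n+2\gamma}{2}}$), corresponding to the boundary exponent $-\frac{n-2\gamma}{2}$ rather than $\gamma_1$.

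This change is not cosmetic. The principal contribution to $\tilde h$ is $-\frac{\partial S(\bar u_i^0)}{\partial \xi_{j,l}^i}$, which (see the estimate for $[(w_j^i)^{\beta-1}-\bar u_0^{\beta-1}]\frac{\partial w_j^i}{\partial r_j^i}$ in the proof of Lemma~\ref{lemma302}) behaves near $p_i$ like $|x-p_i|^{-\frac{n+2\gamma}{2}}e^{-\eta|t^i-t_j^i|}$. Since $\gamma_1>-\frac{n-2\gamma}{2}$, the exponent $\gamma_1-2\gamma$ in the dual of your $\|\cdot\|_{*,j,i}$ is strictly larger than $-\frac{n+2\gamma}{2}$, so $\tilde h$ has infinite dual norm in your spaces and the linear theory cannot be invoked. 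Equivalently, the conclusion you would be producing, $\big|\frac{\partial\phi}{\partial\xi_{j,l}^i}\big|\lesssim|x-p_i|^{\gamma_1}e^{-\sigma|t^i-t_j^i|}$, is strictly stronger than \eqref{equation501} and is false: already $\frac{\partial\bar u_i^0}{\partial\xi_{j,l}^i}$ (which is, up to normalization, $Z_{j,l}^i$) has the $|x-p_i|^{-\frac{n-2\gamma}{2}}$ singularity, and $\frac{\partial\phi}{\partial\xi_{j,l}^i}$ inherits it. To repair the argument, replace $-\gamma_1$ by $\frac{n-2\gamma}{2}$; this exponent is borderline for the invertibility range of Lemma~\ref{lemma301}, but the factor $e^{\sigma|t^i-t_j^i|}$ shifts the effective weight by $\pm\sigma$ away from $t=t_j^i$, and that is what keeps the a priori estimate subcritical. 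The algebraic weight should likewise be $|x-p_{i'}|^{\frac{n-2\gamma}{2}}$ (without the exponential factor) near the other singular points $p_{i'}$, $i'\neq i$, not just at infinity.
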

\begin{proof}
We first consider the case $l=0$. If we differentiate the first equation in  Proposition \ref{proposition301} with respect to $r_j^i$, after some manipulation, we obtain that
\begin{equation*}
\tilde{\mathbb L}\Big(\frac{\partial \phi}{\partial r_j^i}\Big)=\tilde{h}+\sum_{j',l,i'}\frac{\partial c_{j',l}^{i'}}{\partial r_j^i}(w_{j'}^{i'})^{\beta-1}Z_{j',l}^{i'},
\end{equation*}
where
\begin{equation}\label{tilde_h}
\tilde{h}=-\frac{\partial S(\bar{u}_i^0)}{\partial r_j^i}+c_{n,\gamma}\beta[(\bar{u}_i^0+\phi)^{\beta-1}-(\bar{u}_i^0)^{\beta-1}]\frac{\partial \bar{u}_i^0}{\partial r_j^i}+\sum_{l}c_{j,l}^i\frac{\partial }{\partial r_j^i} [(w_j^i)^{\beta-1}Z_{j,l}^i].
\end{equation}

We now introduce two new norms:
\begin{equation*}\begin{split}
\|\phi\|_{*_\sigma}&=\big\| \, |x-p_i|^{\frac{n-2\gamma}{2}}e^{\sigma |t^i-t_j^i|}\phi\big\|_{\mathcal C^{2\gamma+\alpha}(B(p_i,1))}+\sum_{i'\neq i}\big\| \, |x-p_{i'}|^{\frac{n-2\gamma}{2}}\phi\|_{\mathcal C^{2\gamma+\alpha}(B(p_{i'},1))}\\
&+\big\| \,|x|^{n-2\gamma}\phi\,\big\|_{\mathcal C^{2\gamma+\alpha}(\R^n \backslash\cup_{i'} B(p_{i'},1))}
\end{split}\end{equation*}
and
\begin{equation*}\begin{split}
\|\phi\|_{{**}_\sigma}&=\big\| \,|x-p_i|^{\frac{n+2\gamma}{2}}e^{\sigma |t^i-t_j^i|}\phi\big\|_{\mathcal C^{2\gamma+\alpha}(B(p_i,1))}+\sum_{i'\neq i}\big\|\,|x-p_{i'}|^{\frac{n+2\gamma}{2}}\phi\big\|_{\mathcal C^{2\gamma+\alpha}(B(p_{i'},1))}\\
&+\big\|\,|x|^{n+2\gamma}\phi\big\|_{\mathcal C^{2\gamma+\alpha}(\R^n \backslash \cup_{i'} B(p_{i'},1))},
\end{split}\end{equation*}
where $t^i=-\log|x-p_i|$ and $\sigma>0$ is a small positive constant to be determined later.

Similarly to the proof of Lemma \ref{lemma301}, if we work in the above weighted norm spaces, one can check that given $\|\tilde h\|_{{**}_\sigma}<+\infty$, the following problem is solvable:
\begin{equation*}
\left\{\begin{array}{l}
\tilde{\mathbb L}v=\tilde h+\displaystyle{\sum_{j,l,i}}c_{j,l}^i (w_j^i)^{\beta-1}Z_{j,l}^i,\\
\int_{\R^n}v (w_j^i)^{\beta-1}Z_{j,l}^i\,dx=0, \quad j=0,1,\ldots,\ l=0,\ldots,n,
\end{array}
\right.
\end{equation*}
and the solution $v$ satisfies $\|v\|_{*_\sigma}\leq C\|\tilde h\|_{{**}_\sigma}$ where $C$ only depends on $\sigma$. We would like to apply this estimate to (\ref{eq501}), but we do not have the orthogonality condition on $\frac{\partial \phi}{\partial r_j^i}$. This can be recovered by adding some corrections.

For this, the $L^2$-product of $\frac{\partial \phi}{\partial r_j^i}$ and $(w_j^i)^{\beta-1}Z_{j,l}^i$ can be estimated as follows: differentiating the second equation in Proposition \ref{proposition301} with respect to $r_j^i$, we obtain by the estimate satisfied by $\phi$ given in \eqref{decay-phi} that
\begin{equation}\label{eq503}
\Big|\int_{\R^n}\frac{\partial \phi}{\partial r_j^i}(w_{j'}^i)^{\beta-1}Z_{j',l}^i\,dx\Big|=\Big|-\int_{\R^n }\phi\frac{\partial }{\partial r_j^i}[(w_{j'}^i)^{\beta-1}Z_{j',l}^{i}]\,dx\Big|\leq Ce^{-\frac{(n-2\gamma)L}{4}(1+\xi)}\delta_{jj'}e^{-\sigma t^i_j},
\end{equation}
for some $\sigma>0$ independent of $L$ large.

Since for $i'\neq i$, the orthogonality is satisfied, we set
\begin{equation}\label{hat-phi}
\hat{\phi}=\frac{\partial \phi}{\partial r_j^i}+\sum_{j,l}\alpha_{j,l}^iZ_{j,l}^i,
\end{equation}
for some $\alpha_{j,l}^i\in\mathbb R$. We would like to choose the numbers $\alpha_{j,l}^i$ so that the new function $\hat{\phi}$ will satisfy the orthogonality condition. In order to have this, we need
\begin{equation*}
\int_{\R^n }\frac{\partial \phi}{\partial r_j^i}(w_{j'}^{i'})^{\beta-1}Z_{j',l'}^{i'}\,dx+\sum_{j,l}\int_{\R^n}\alpha_{j,l}^iZ_{j,l}^i(w_{j'}^{i'})^{\beta-1}Z_{j',l'}^{i'}\,dx=0.
\end{equation*}
From estimate (\ref{eq503}), one has
\begin{equation}\label{eq504}
|\alpha_{j,l}^i|\leq Ce^{-\frac{(n-2\gamma)L}{4}(1+\xi)}e^{-\sigma t^i_j}.
\end{equation}
Then $\hat{\phi}$ will satisfy the following equation
\begin{equation*}
\left\{\begin{array}{l}
\tilde{\mathbb L}(\hat{\phi})=\hat{h}+\displaystyle{\sum_{j',l',i'}}\frac{\partial c_{j',l'}^{i'}}{\partial r_j^i}(w_{j'}^{i'})^{\beta-1}Z_{j',l'}^{i'},\\
\int_{\R^n}\hat{\phi}(w_{j'}^{i'})^{\beta-1}Z_{j',l'}^{i'}\,dx=0,
\end{array}
\right.
\end{equation*}
where
\begin{equation*}
\hat{h}=\tilde{h}+c_{n,\gamma}\beta\sum_{j,l,i}\alpha_{j,l}^i Z_{j,l}^i[(\bar{u}_i^0)^{\beta-1}-(\bar{u}_i^0+\phi)^{\beta-1}].
\end{equation*}
In conclusion, to estimate $\hat{\phi}$ and hence $\frac{\partial \phi}{\partial r_j^i}$, it suffices to estimate $\hat{h}$. So we now bound $\hat{h}$ term by term.

Concerning $\frac{\partial S(\bar{u}_i^0)}{\partial r_j^i}$, we have from the arguments in Section \ref{sec3} that
\begin{equation*}
\Big\|\frac{\partial S(\bar{u}_i^0)}{\partial r_j^i}\Big\|_{{**}_\sigma }\leq Ce^{-\frac{(n-2\gamma)L}{4}(1+\xi)}.
\end{equation*}

From the estimates satisfied by $\phi$, the same estimate holds for the second term in \eqref{tilde_h} if $\sigma<\gamma_1+\frac{n-2\gamma}{2}$.  For the third term, it contains the symbol $\delta_{jj'}$, so the estimate follows by the bounds for $c_{j,l}^i$ in the proof of Lemma \ref{lemma301}.  Moreover, from (\ref{eq504}), one can get the same estimate for the fourth term. In conclusion, one has
\begin{equation*}
\|\hat{h}\|_{{**}_\sigma}\leq Ce^{-\frac{(n-2\gamma)L}{4}(1+\xi)}.
\end{equation*}
Hence by the above reasoning, $\|\hat{\phi}\|_{*_\sigma}\leq Ce^{-\frac{(n-2\gamma)L}{4}(1+\xi)}
$. Formulas \eqref{hat-phi} and \eqref{eq504} yield
\begin{equation*}
\Big\|\frac{\partial \phi}{\partial r_j^i}\Big\|_{*_\sigma}\leq Ce^{-\frac{(n-2\gamma)L}{4}(1+\xi)}.
\end{equation*}
Finally, by the definition of $\|\cdot\|_{*_\sigma}$ norm, we obtain the first assertion in (\ref{equation501}).\\

Similarly, differentiating the first equation in Proposition \ref{proposition301} with respect to $a_j^i$ and arguing as above, always keeping in mind that $\frac{\partial w_j^i}{\partial a_j^i}\sim\frac{1}{\lambda_j^i}|x-p_i|^{-\frac{n-2\gamma}{2}}(v^i_j)^{\frac{2}{n-2\gamma}+1}$, one can get that
\begin{equation*}
\Big\|\frac{\partial \phi}{\partial a_j^i}\Big\|_{*_\sigma}\leq \frac{C}{\lambda_j^i}e^{-\frac{(n-2\gamma)L}{4}(1+\xi)},
\end{equation*}
which yields the second estimate in \eqref{equation501}.
\end{proof}

We now  describe the asymptotic profile of  the function $\frac{\partial \phi}{\partial \xi_{j,l}^i}$. First of all, we consider the ideal case when there is only one point singularity at $p=0$ and $u=u_L$, i.e., the exact Delaunay solution from \eqref{exact-Delaunay}. By definition, $u_L$ is a solution of \eqref{eq303} with $\phi=0$ and vanishing right hand side. For $j\geq 1$, we assume that we are varying $w_j$ by $a_j, r_j$, and denote the corresponding approximate solution by $\bar{u}_L$. We are still able to perform the reduction in Proposition \ref{proposition301} to find a solution of the form $\bar{u}_L+\bar{\phi}$ of the following equation
\begin{equation}\label{equation100}
\left\{\begin{array}{l}
(-\Delta)^\gamma (\bar{u}_L+\bar{\phi})-c_{n,\gamma}(\bar{u}_L+\bar{\phi})^\beta
=\displaystyle{\sum_{{j,l}}}c_{j,l}w_j^{\beta-1}Z_{j,l},\\
\int_{\R^n}\bar{\phi}w_j^{\beta-1}Z_{j,l}\,dx=0.
\end{array}
\right.
\end{equation}
Note that an estimate similar to that of Lemma \ref{lemma501} will hold true for the corresponding $\bar\phi$. But we also need to control the derivative of $c_{j,l}$ with respect to the perturbations. In order to do so, we first introduce some notation.
Define
\begin{equation*}
\beta_{j,l}:=\int_{\R^n}[(-\Delta)^\gamma (\bar{u}_L+\bar{\phi})-c_{n,\gamma}(\bar{u}_L+\bar{\phi})^\beta]{Z}_{j,l}\,dx.
\end{equation*}
We are interested in the derivatives of $\beta_{j',l'}$ with respect to $\{\xi_{j,l}\}=\{r_j, a_{j,1}, \cdots,a_{j,n}\}$ for $l=0,\cdots,n$.

\begin{lemma}\label{lemma502}
For $L$ large, the following estimates hold:
\begin{equation*}\begin{split}
&\frac{\partial \beta_{j,0}}{\partial r_j}=-2c_{n,\gamma}F'(L)+O(e^{-\frac{(n-2\gamma)L}{2}(1+\xi)}),\ \ \\
& \frac{\partial \beta_{j-1,0}}{\partial r_j}=c_{n,\gamma}F'(L)+O(e^{-\frac{(n-2\gamma)L}{2}(1+\xi)}),  \\
&\frac{\partial \beta_{j+1,0}}{\partial r_j}=c_{n,\gamma}F'(L)+O(e^{-\frac{(n-2\gamma)L}{2}(1+\xi)}), \\
&\frac{\partial \beta_{J,0}}{\partial r_j}=O(e^{-\frac{(n-2\gamma)L}{2}(1+\xi)}e^{-\sigma |t_J-t_j|}), \mbox{ for }|J-j|\geq 2.
\end{split}\end{equation*}
And for $l=1,\ldots,n$,
\begin{equation*}\begin{split}
&\frac{\partial \beta_{j,l}}{\partial a_{j,l}}
=c_{n,\gamma}\lambda_j\sum_{j'\neq j}\min\big\{\frac{\lambda_{j'}}{\lambda_j},\frac{\lambda_j}{\lambda_{j'}}\big\}^{\frac{n-2\gamma}{2}}
\frac{1}{\max\{\lambda_{j'}^2,\lambda_j^2\}}+O(e^{-\frac{(n-2\gamma)L}{2}(1+\xi)}),\ \ \\
&\frac{\partial \beta_{J,l}}{\partial a_{j,l}}=-c_{n,\gamma}\lambda_J\Big[\min\big\{\frac{\lambda_J}{\lambda_j},\frac{\lambda_j}{\lambda_J}\big\}
^{\frac{n-2\gamma}{2}}\frac{1}{\max\{\lambda_J^2,\lambda_j^2\}}
+O\big(\frac{1}{\lambda_i}e^{-\frac{(n-2\gamma)L}{2}(1+\xi)}e^{-\sigma |t_J-t_j|}\big)
\Big], \ \mbox{if } j\neq J,
\end{split}\end{equation*}
In addition,
\begin{equation*}
\frac{\partial \beta_{J,l}}{\partial \xi_{j,l'}}=0, \  \mbox{if }l\neq l'.
\end{equation*}
Here the derivatives are evaluated at $a_j,r_j=0$.
\end{lemma}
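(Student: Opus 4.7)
The proof will mirror the structure of Lemma \ref{lemma402}, but in the simpler single-Delaunay setting where we do not have multi-point interactions between different $p_i$'s. The plan is to decompose $\beta_{j,l} = \beta_{j,l,1} + \beta_{j,l,2} + \beta_{j,l,3}$ exactly as in the proof of Lemma \ref{lemma401}, where $\beta_{j,l,1} = \int_{\R^n} S(\bar u_L)\, Z_{j,l}\,dx$, $\beta_{j,l,2} = \int_{\R^n}[(-\Delta)^\gamma \bar\phi - c_{n,\gamma}\beta\bar u_L^{\beta-1}\bar\phi] Z_{j,l}\,dx$, and $\beta_{j,l,3} = -c_{n,\gamma}\int_{\R^n}[(\bar u_L+\bar\phi)^\beta-\bar u_L^\beta-\beta\bar u_L^{\beta-1}\bar\phi] Z_{j,l}\,dx$. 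Differentiating in $\xi_{J,l'}$ reduces the computation of $\partial_{\xi_{J,l'}}\beta_{j,l}$ to three contributions, each of which I will estimate separately.

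For $\beta_{j,l,2}$ and $\beta_{j,l,3}$, I would use the orthogonality condition $\int_{\R^n}\bar\phi\,(w_j)^{\beta-1}Z_{j,l}\,dx=0$ to rewrite $\beta_{j,l,2}$ as $-c_{n,\gamma}\beta\int_{\R^n}[\bar u_L^{\beta-1}-w_j^{\beta-1}]\bar\phi\, Z_{j,l}\,dx$, then differentiate in $\xi_{J,l'}$ and use Lemma \ref{lemma501} together with $\|\bar\phi\|_*\leq Ce^{-(n-2\gamma)L(1+\xi)/4}$. The bound $|\partial_{\xi_{J,l'}}\bar\phi|$ has the key exponentially decaying factor $e^{-\sigma|t-t_J|}$, which combined with the estimates of Section \ref{sec3} yields $|\partial_{\xi_{J,l'}}\beta_{j,l,2}| + |\partial_{\xi_{J,l'}}\beta_{j,l,3}| \leq Ce^{-(n-2\gamma)L(1+\xi)/2}e^{-\sigma|t_j-t_J|}$ (with an extra $\lambda_j$ factor when $l\geq 1$), which is absorbed into the error terms stated in the lemma.

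The main contribution is therefore $\partial_{\xi_{J,l'}}\beta_{j,l,1}$. Passing the derivative inside and using that $S'(\bar u_L)[\varphi] = \beta c_{n,\gamma}[w_J^{\beta-1}-\bar u_L^{\beta-1}](\partial_{\xi_{J,l'}}w_J) + (-\Delta)^\gamma(\text{smooth piece}) - \cdots$, the leading order becomes $-c_{n,\gamma}\beta \,\partial_{\xi_{J,l'}}\int_{\R^n}(w_J)^{\beta-1} Z_{j,l}\, w_{J}\,dx$ plus cross terms of the form $\int (w_J)^{\beta-1}Z_{j,l}\, w_{j'}\,dx$ for $j'\neq J$ when $J=j$. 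I would then invoke the integral identities (eqa01)--(eqa04) from the appendix to evaluate these. Concretely: for $l=l'=0$, the identity $\int (w_j)^{\beta-1}Z_{j,0}w_{j'}\,dx \sim F(|\log(\lambda_j/\lambda_{j'})|)\,\mathrm{sgn}(\log(\lambda_{j'}/\lambda_j))/c_{n,\gamma}$ and $\partial_{r_j}\lambda_j = \lambda_j$ (at $r_j=0$) give the $\pm F'(L)$ contributions from the two nearest neighbors $j\pm 1$, hence the $-2F'(L)$ for $J=j$ (two neighbors contribute) and $+F'(L)$ for $J=j\pm 1$. For $|J-j|\geq 2$, only non-neighboring bubbles interact with $w_J$, giving the exponentially smaller error.

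For $l = l' \geq 1$, the analogous identity $\int (w_J)^{\beta-1}Z_{J,l}w_{j}\,dx \sim A_0\lambda_J \min(\lambda_j/\lambda_J,\lambda_J/\lambda_j)^{(n-2\gamma)/2}\frac{a_{j,l}}{\max(\lambda_j,\lambda_J)^2}$ from the appendix, differentiated in $a_{j,l}$ at $a_j=0$, produces exactly the $A_0$ terms in the statement: summing over $j'\neq j$ yields $\partial\beta_{j,l}/\partial a_{j,l}$, while fixing $J\neq j$ isolates the single term in $\partial\beta_{J,l}/\partial a_{j,l}$. Finally, the vanishing $\partial\beta_{J,l}/\partial \xi_{j,l'}=0$ for $l\neq l'$ follows by a parity argument: $Z_{J,0}$ is even in the angular variable around $p_i$ while $Z_{J,l}$ ($l\geq 1$) is odd in the $l$-th coordinate, and the kernel of the linearization preserves these symmetries at $a_j=0$, so mixed derivatives vanish at leading order, the remaining $O(e^{-(n-2\gamma)L(1+\xi)/2})$ error being absorbed via the same estimates used above. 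The main technical obstacle is verifying that all the ``mixed'' error terms produced when differentiating the cross interactions $\int (w_J)^{\beta-1}Z_{j,l}w_{j'}\,dx$ for $j'\neq J,j$ decay fast enough; this is handled by the exponential decay $e^{-(n-2\gamma)|t_j-t_{j'}|/2}$ of bubble interactions proved in the appendix.
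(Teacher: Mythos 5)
Your proposal follows essentially the same route as the paper: differentiate the $\beta_{J,l}$'s at the base point $(a_j,r_j)=(0,0)$, observe that the exact Delaunay solution $u_L$ makes $\bar\phi$ and all the $c_{j,l}$'s vanish there (so the $\bar\phi$- and $c$-terms only contribute errors via their first derivatives, bounded by Lemma~\ref{lemma501} and self-adjointness), and reduce the main contribution to $\partial_{\xi_{j,l}}\int S(\bar u_L)Z_{J,l}\,dx$, which is then evaluated via the appendix identities and the computations already packaged in Lemma~\ref{lemma402}. The only differences are cosmetic: the paper differentiates the defining integral of $\beta_{J,l}$ directly rather than going through the $\beta_{J,l,1}+\beta_{J,l,2}+\beta_{J,l,3}$ decomposition first, and there are a couple of clerical slips in your write-up (e.g.\ the leading order for $J\neq j$ should be $\partial_{\xi_{J,l'}}\int w_j^{\beta-1}Z_{j,l}\,w_J\,dx$ rather than $\partial_{\xi_{J,l'}}\int w_J^{\beta-1}Z_{j,l}\,w_J\,dx$, and the normalization constant in your quoted version of \eqref{eqa01} is $\beta$, not $c_{n,\gamma}$), but these do not affect the argument.
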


\begin{proof}

Differentiating the expression for $\beta_{J,l}$ with respect to $r_j$, and recalling equation \eqref{equation100} one has
\begin{equation}\label{ecuacion14}
\frac{\partial \beta_{J,l}}{\partial r_j}=\int_{\R^n}\Big[\bar{\mathbb L}\big(\frac{\partial \bar{\phi}}{\partial r_j}\big)+\frac{\partial S(\bar{u}_L)}{\partial r_j}\Big]Z_{J,l}\,dx+\sum_{j'',l'}c_{j'',l'}
\int_{\R^n}w_{j''}^{\beta-1}Z_{j'',l'}\frac{\partial {Z}_{J,l}}{\partial r_j}\,dx,
\end{equation}
where we have defined
\begin{equation*}
\bar{\mathbb L}=(-\Delta)^\gamma-c_{n,\gamma}\beta u_L^{\beta-1},
\end{equation*}
since $\bar{\phi}=0$ when $\bar{u}_L=u_L$.
We write
\begin{equation*}\begin{split}
\int_{\R^n}\bar{\mathbb L}\big(\frac{\partial \bar{\phi}}{\partial r_j}\big){Z}_{J,l}\,dx&=\int_{\R^n}\mathbb L_{J}({Z}_{J,l})\frac{\partial \bar{\phi}}{\partial r_j}\,dx+\int_{\R^n}(\bar{\mathbb L}-\mathbb L_{J})({Z}_{J,l})\frac{\partial \bar{\phi}}{\partial r_j}\,dx+O(e^{-\frac{(n-2\gamma)L}{2}(1+\xi)}e^{-\sigma |t_{J}-t_j|})\\
&=c_{n,\gamma}\beta\int_{\R^n}[w_{J}^{\beta-1}-u_L^{\beta-1}]\frac{\partial \bar{\phi}}{\partial r_j}{Z}_{J,l}\,dx+O(e^{-\frac{(n-2\gamma)L}{2}(1+\xi)}e^{-\sigma |t_{J}-t_j|})\\
&=O(e^{-\frac{(n-2\gamma)L}{2}(1+\xi)}e^{-\sigma| t_{J}-t_j|}),
\end{split}\end{equation*}
where $\mathbb L_{J}=(-\Delta)^\gamma-c_{n,\gamma}\beta w_{J}^{\beta-1}$.

Since $u_L$ is the exact solution, the corresponding $c_{j'',l}=0$. Thus for the last term in \eqref{ecuacion14} we have
\begin{equation*}
\sum_{j'',l}c_{j'',l}\int_{\R^n}w_{j''}^{\beta-1}Z_{j'',l'}\frac{\partial {Z}_{J,l}}{\partial r_j}\,dx=0.
\end{equation*}
In conclusion, one has
\begin{equation*}
\frac{\partial \beta_{J,l}}{\partial r_j}=\frac{\partial }{\partial r_j}\int_{\R^n }S(\bar{u}_L){Z}_{J,l}\,dx+O(e^{-\frac{(n-2\gamma)L}{2}(1+\xi)}e^{-\sigma |t_{J}-t_j|}),
\end{equation*}
where we have used the fact that $S(\bar{u}_L)=0$ for $\bar{u}_L=u_L$.

Similarly, recalling the definition of $Z_{J,l}=\lambda_j\frac{\partial w_{J}}{\partial a_{J,l}}$, and the estimates for $\frac{\partial \bar{\phi}}{\partial a_j}$ from the previous paragraphs,
\begin{equation*}
\frac{\partial \beta_{J,l}}{\partial a_j}=\frac{\partial }{\partial a_j}\int_{\R^n }S(\bar{u}_L){Z}_{J,l}\,dx
+O\big(\frac{\lambda_{J}}{\lambda_j}e^{-\frac{(n-2\gamma)L}{2}(1+\xi)}e^{-\sigma |t_{J}-t_j|}\big).
\end{equation*}
Both variations above can be calculated from Lemma \ref{lemma402}, with the obvious modifications as we just have one singular point so there is no summation in $i$.  Thus one has
\begin{equation*}
\frac{\partial \beta_{J,0} }{\partial r_j}=c_{n,\gamma}\sum_{j'\neq j'}\frac{\partial }{\partial r_j}\Big[F(|\log \tfrac{\lambda_{j'}}{\lambda_{J}}|)\frac{\log\frac{\lambda_{j'}}{\lambda_{J}}}{|\log \frac{\lambda_{j'}}{\lambda_{J}}|}\Big]+O(e^{-\frac{(n-2\gamma)L}{2}(1+\xi)}e^{-\sigma |t_{J}-t_j|}).
\end{equation*}
The first four conclusions in Lemma \ref{lemma502} follow by taking different values of $J$ and from the definition of $\lambda_j$.

Very similarly, for $l=1,\ldots,n$, applying Lemma \ref{lemma402} we obtain
\begin{equation*}\begin{split}
\frac{\partial \beta_{J,l}}{\partial a_{j,l}}
&=O\big(\dfrac{\lambda_{J}}{\lambda_j}e^{-\frac{(n-2\gamma)L}{2}(1+\xi)}e^{-\sigma |t_{J}-t_j|}\big)\\
&\quad+\left\{\begin{array}{l}
c_{n,\gamma}\lambda_j\dfrac{\partial }{\partial a_{j,l}}\displaystyle{\sum_{j'\neq j}}\Big[\min\big\{\frac{\lambda_{j'}}{\lambda_j},\frac{\lambda_j}{\lambda_{j'}}\big\}
^{\frac{n-2\gamma}{2}}
\frac{a_{j,l}}{\max\{\lambda_{j'}^2,\lambda_j^2\}}\Big] \ \mbox{ if }J=j,\\
-c_{n,\gamma}\lambda_{J}\displaystyle{\frac{\partial }{\partial a_{j,l}}}\Big[\min\big\{\frac{\lambda_{J}}{\lambda_j},\frac{\lambda_j}{\lambda_{J}}\big\}
^{\frac{n-2\gamma}{2}}
\frac{a_{j,l}}{\max\{\lambda_{J}^2,\lambda_j^2\}}\Big]
\ \mbox{ if }J\neq j.
\end{array}
\right.
\end{split}\end{equation*}
In addition, by the symmetry of the problem, we have $\frac{\partial \beta_{J,l'}}{\partial \xi_{j,l}}=0$ if $l\neq l'$. This completes the proof of the Lemma.

\end{proof}

The reason we have studied the special configuration $u_L$ is that we will identify the quantities $\frac{\partial \beta_{j,l}^i}{\partial \xi_{j,l}^i}$ as the limits of the derivatives of $\beta_{j,l}$ with respect to $\xi_{j,l}$ as $j\to \infty$. We fix a point $p=p_i$ and a Delaunay parameter $L=L_i$. We denote $\bar{u}_{L_i}$, $\bar{\phi}_i$ the pair that gives the solution to \eqref{equation100}. Before we state the result, we first need to compare the functions                                                   $\frac{\partial \phi}{\partial \xi_{j,l}^i}$ and $\frac{\partial \bar{\phi}_i}{\partial \xi_{j,l}}$ for $i$ fixed, as in the lemma below:

\begin{lemma}\label{lemma503}
Take $\bar{u}_i^0$ and $\phi$ as in Lemma \ref{lemma501}. For $i$ fixed and $j\geq 1$ we have the following estimate:
\begin{equation*}
\Big\|\frac{\partial \phi}{\partial \xi_{j,l}^i}-\frac{\partial \bar{\phi}_i}{\partial \xi_{j,l}^i}\Big\|_{*_\sigma}\leq \left\{\begin{array}{c}
Ce^{-\frac{(n-2\gamma)L}{4}(1+\xi)}e^{-\sigma t_j^i} \  \mbox{ for }l=0,\\
\frac{C}{\lambda_j^i}e^{-\frac{(n-2\gamma)L}{4}(1+\xi)}e^{-\sigma t_j^i} \ \mbox{ for }l\geq 1.
\end{array}
\right.
\end{equation*}
\end{lemma}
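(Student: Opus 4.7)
The plan is to derive a linear equation satisfied by the difference $\Psi := \frac{\partial \phi}{\partial \xi_{j,l}^i}-\frac{\partial \bar{\phi}_i}{\partial \xi_{j,l}^i}$ and invert it in the weighted norm $\|\cdot\|_{*_\sigma}$ used in the proof of Lemma \ref{lemma501}. First I would differentiate the reduced problem from Proposition \ref{proposition301} with respect to $\xi_{j,l}^i$ to obtain, as in the proof of Lemma \ref{lemma501}, an equation
$\tilde{\mathbb L}\bigl(\frac{\partial \phi}{\partial \xi_{j,l}^i}\bigr)=\tilde h+\sum\frac{\partial c_{j',l'}^{i'}}{\partial \xi_{j,l}^i}(w_{j'}^{i'})^{\beta-1}Z_{j',l'}^{i'}$,
and similarly, differentiating \eqref{equation100}, an equation
$\bar{\mathbb L}_i\bigl(\frac{\partial \bar{\phi}_i}{\partial \xi_{j,l}^i}\bigr)=\bar h_i+\sum\frac{\partial \bar c_{j',l'}}{\partial \xi_{j,l}^i}(w_{j'})^{\beta-1}Z_{j',l'}$,
where $\bar{\mathbb L}_i=(-\Delta)^\gamma-c_{n,\gamma}\beta(\bar u_{L_i}+\bar\phi_i)^{\beta-1}$.

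Subtracting these two identities, $\Psi$ satisfies $\tilde{\mathbb L}\Psi=E+(\bar{\mathbb L}_i-\tilde{\mathbb L})\bigl(\frac{\partial \bar{\phi}_i}{\partial \xi_{j,l}^i}\bigr)+\text{Lagrange multipliers}$, where the effective right hand side $E=\tilde h-\bar h_i$ collects the genuine discrepancies between the two problems. These discrepancies come entirely from the presence of the other singular points $p_{i'}$, $i'\neq i$, and from the cut-off correction $\phi_{i'}$: explicitly, (i) $\frac{\partial S(\bar u_i^0)}{\partial \xi_{j,l}^i}-\frac{\partial S(\bar u_{L_i})}{\partial \xi_{j,l}^i}$ involves factors of $[(w_j^i)^{\beta-1}-\bar u^{\beta-1}]\frac{\partial w_j^i}{\partial \xi_{j,l}^i}$ in which $\bar u-u_{L_i}=\sum_{i'\neq i}\sum_{j''}w_{j''}^{i'}+\text{smooth}=O(e^{-(n-2\gamma)L/4})$ inside $B(p_i,1)$; (ii) the operator difference $\bar{\mathbb L}_i-\tilde{\mathbb L}$ acts on a function that Lemma \ref{lemma501} (applied to the single-point model $u_{L_i}$) controls in $\|\cdot\|_{*_\sigma}$, multiplied again by a factor $O(e^{-(n-2\gamma)L/4})$; (iii) the cross terms from $\phi-\bar\phi_i$, which are already controlled by $e^{-(n-2\gamma)L/4(1+\xi)}$ by comparing Proposition \ref{proposition301} with its one-point analogue. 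Combining these estimates, and using crucially that for $j\geq 1$ the function $\frac{\partial w_j^i}{\partial \xi_{j,l}^i}$ concentrates at scale $\lambda_j^i$ near $p_i$, so that pairing with the far away sources produces the additional factor $e^{-\sigma t_j^i}$, one finds $\|E\|_{**_\sigma}\le Ce^{-(n-2\gamma)L/4(1+\xi)}e^{-\sigma t_j^i}$ (respectively $\tfrac{C}{\lambda_j^i}$ for $l\ge 1$, because of the normalization $Z_{j,l}^i=\lambda_j^i\partial_{a^i_{j,l}}w^i_j$).

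To apply the invertibility of $\tilde{\mathbb L}$ in the $*_\sigma$ norm one needs the orthogonality $\int \Psi\,(w_{j'}^{i'})^{\beta-1}Z_{j',l'}^{i'}dx=0$. As in \eqref{hat-phi}--\eqref{eq504}, I would add a finite-rank correction $\sum\alpha_{j',l'}^{i''}Z_{j',l'}^{i''}$ with $|\alpha_{j',l'}^{i''}|\lesssim e^{-(n-2\gamma)L/4(1+\xi)}e^{-\sigma t_j^i}$, the estimate following from pairing the defining equation for $\phi-\bar\phi_i$ with the approximate kernels and using that $\bar\phi_i$ already is orthogonal to the single-point kernels $(w_{j'}^i)^{\beta-1}Z_{j',l'}^i$. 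The resulting $\hat\Psi$ satisfies a solvable problem with right hand side of the same size as $E$, and the a priori estimate in the weighted spaces (identical to the one proven for $\hat\phi$ in Lemma \ref{lemma501}) yields the bound claimed in the statement. The main obstacle, and the reason the decay $e^{-\sigma t_j^i}$ really appears, is the bookkeeping in (i)--(ii): one must track that every source term on which the localization $\frac{\partial w_j^i}{\partial \xi_{j,l}^i}$ acts is either truly external (the contributions from $p_{i'}$, $i'\neq i$) or paired with a factor vanishing at $p_i$, so that the integrals estimated in the $\|\cdot\|_{**_\sigma}$ norm pick up the weight $e^{\sigma|t^i-t_j^i|}$ without loss.
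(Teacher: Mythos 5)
Your proposal follows essentially the same route as the paper's proof: differentiate both reduced problems with respect to $\xi_{j,l}^i$, subtract to obtain a linear equation for $\Psi=\frac{\partial\phi}{\partial\xi_{j,l}^i}-\frac{\partial\bar\phi_i}{\partial\xi_{j,l}^i}$, restore orthogonality by adding a finite-rank correction $\sum\alpha_{j',l'}^{i'}Z_{j',l'}^{i'}$ estimated as in \eqref{eq504}, and close via the a priori bound in the $*_\sigma$-norm. You also correctly identify the key cancellation exploited in the paper, namely that at $\xi_{j,l}=0$ the one-point reference $\bar u_{L_i}=u_{L_i}$ is an exact solution, so $\bar\phi_i=0$ and $c_{j,l}=0$ there, which is what makes all the discrepancy terms genuinely external (coming from the points $p_{i'}$, $i'\neq i$) and hence carry the $e^{-(n-2\gamma)L/4(1+\xi)}$ smallness; and you correctly attribute the extra $e^{-\sigma t_j^i}$ to the localization of $\frac{\partial w_j^i}{\partial\xi_{j,l}^i}$ at scale $\lambda_j^i$ near $p_i$, as well as the $\lambda_j^i$-normalization for $l\geq 1$. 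The only minor deviation is that you work with the operator $\tilde{\mathbb L}$ (linearized at $\bar u_i^0+\phi$) while the paper writes the difference equation using $\mathbb L_{\bar u_i^0}$ (linearized at $\bar u_i^0$); since $\phi$ is small this is a cosmetic choice, absorbed into the remainder terms, and does not change the argument.
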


\begin{proof}
As before, we write down the equations satisfied by $\phi$ and $\bar{\phi}_i$,
\begin{equation*}
(-\Delta)^\gamma (\bar{u}_i^0+\phi)-c_{n,\gamma}(\bar{u}_i^0+\phi)^\beta=\sum_{i',j,l} c_{j,l}^{i'}(w_j^{i'})^{\beta-1}Z_{j,l}^{i'}
\end{equation*}
and
\begin{equation*}
(-\Delta)^\gamma (\bar{u}_{L_i}+\bar{\phi}_i)-c_{n,\gamma}(\bar{u}_{L_i}+\bar{\phi}_i)^\beta=\sum_{j,l} c_{j,l}(w_j^i)^{\beta-1}Z_{j,l}^i.
\end{equation*}
We will differentiate both expressions with respect to $\xi_{j,l}^i$; one has from the first equation that
\begin{equation*}\begin{split}
&(-\Delta)^\gamma \big(\frac{\partial \bar{u}_i^0}{\partial \xi_{j,l}^i}+\frac{\partial \phi}{\partial \xi_{j,l}^i}\big)-c_{n,\gamma}\beta (\bar{u}_i^0+\phi)^{\beta-1}\big(\frac{\partial \bar{u}_i^0}{\partial \xi_{j,l}^i}+\frac{\partial \phi}{\partial \xi_{j,l}^i}\big)\\
&\quad\quad= \sum_{i'}\sum_{j=0}^\infty c_{j,l}^{i'}\frac{\partial }{\partial \xi_{j,l}^i}[(w_j^{i'})^{\beta-1}Z_{j,l}^{i'}]+\sum_{i'}\sum_{j'=0}^\infty \frac{\partial c_{j',l}^{i'}}{\partial \xi_{j,l}^i}(w_{j'}^{i'})^{\beta-1}Z_{j',l}^{i'}
\end{split}\end{equation*}
Here, by the definition of the approximate solution $\bar{u}_i^0$, one knows that $\frac{\partial \bar{u}_i^0}{\partial \xi_{j,l}^i}=\frac{\partial w_j^i}{
\partial \xi_{j,l}^i}$.

Next, differentiating the second equation,
\begin{equation*}\begin{split}
&(-\Delta)^\gamma (\frac{\partial \bar{u}_{L_i}}{\partial \xi_{j,l}}+\frac{\partial \bar{\phi}_i}{\partial \xi_{j,l}})-c_{n,\gamma}\beta(\bar{u}_{L_i}+\bar{\phi}_i)^{\beta-1}(\frac{\partial \bar{u}_{L_i}}{\partial \xi_{j,l}}+\frac{\partial \bar{\phi}_i}{\partial \xi_{j,l}})\\
&\quad\quad=\sum_{j'=-\infty}^\infty\frac{\partial c_{j',l'}}{\partial \xi_{j,l}}(w_{j'}^i)^{\beta-1}Z_{j,l}+\sum_{j=-\infty}^\infty c_{j,l}\frac{\partial }{\partial \xi_{j,l}}[w_j^{\beta-1}Z_{j,l}].
\end{split}\end{equation*}
To simplify this expression, recall that when $\bar{u}_{L_i}=u_{L_i}$ is  a exact solution, one has $c_{j,l}=0, \ \bar{\phi}_i=0$. So when evaluating at  $\xi_{j,l}=0$, this equation becomes
\begin{eqnarray*}
(-\Delta)^\gamma \frac{\partial \bar{\phi}_i}{\partial \xi_{j,l}}-c_{n,\gamma}u_{L_i}^{\beta-1}\frac{\partial \bar{\phi}_i}{\partial \xi_{j,l}}+(-\Delta )^\gamma \frac{\partial w_j}{\partial \xi_{j,l}}-c_{n,\gamma}\beta u_{L_i}^{\beta-1}\frac{\partial w_j }{\partial \xi_{j,l}} =\sum_{j'=-\infty}^\infty\frac{\partial c_{j',l'}}{\partial \xi_{j,l}}(w_{j'}^i)^{\beta-1}Z_{j,l}
\end{eqnarray*}
Denote $\mathbb L_{\bar{u}_i^0}=(-\Delta)^\gamma-c_{n,\gamma}\beta (\bar{u}_i^0)^{\beta-1}$. Taking the difference of the above two expressions we obtain an equation for $\frac{\partial \phi}{\partial \xi_{j,l}^i}-\frac{\partial \bar{\phi}}{\partial \xi_{j,l}}$:

\begin{equation*}\begin{split}
\mathbb L&_{\bar{u}_i^0}\Big(\frac{\partial \phi}{\partial \xi_{j,l}^i}-\frac{\partial \bar{\phi}_i}{\partial \xi_{j,l}}\Big)\\
&=c_{n,\gamma}\beta\Big[\big((\bar{u}_i^0+\phi)^{\beta-1}-u_{L_i}^{\beta-1}\big)\frac{\partial w_{j}^i}{\partial \xi_{j,l}^i}+\big((\bar{u}_i^0)^{\beta-1}-u_{L_i}^{\beta-1}\big)\frac{\partial \bar{\phi}_i}{\partial \xi_{j,l}}\Big]+c_{n,\gamma}\beta[(\bar{u}_i^0+\phi)^{\beta-1}-(\bar{u}_i^0)^{\beta-1}]\frac{\partial \phi}{\partial \xi_{j,l}^i}\\
&\quad+\sum_{i'}\sum_{j=0}^\infty c_{j,l}^{i'}\frac{\partial }{\partial \xi_{j,l}^i}[(w_j^{i'})^{\beta-1}Z_{j,l}^{i'}]+\sum_{i'}\sum_{j'=0}^\infty \frac{\partial c_{j',l}^{i'}}{\partial \xi_{j,l}^i}(w_{j'}^{i'})^{\beta-1}Z_{j',l}^{i'}-\sum_{j'=-\infty}^\infty\frac{\partial c_{j',l'}}{\partial \xi_{j,l}}(w_{j'}^i)^{\beta-1}Z_{j,l}\\
&=c_{n,\gamma}\beta\Big[((\bar{u}_i^0+\phi)^{\beta-1}-u_{L_i}^{\beta-1})\frac{\partial w_{j}^i}{\partial \xi_{j,l}^i}+((\bar{u}_i^0)^{\beta-1}-u_{L_i}^{\beta-1})\frac{\partial \bar{\phi}_i}{\partial \xi_{j,l}}\Big]
+c_{n,\gamma}\beta[(\bar{u}_i^0+\phi)^{\beta-1}-(\bar{u}_i^0)^{\beta-1}]\frac{\partial \phi}{\partial \xi_{j,l}^i}\\
&\quad+\sum_{j=0}^\infty c_{j,l}^i\frac{\partial }{\partial \xi_{j,l}^i}[(w_j^i)^{\beta-1}Z_{j,l}^i]+\sum_{j'<0}\frac{\partial c_{j',l'}}{\partial \xi_{j,l}}(w_{j'}^i)^{\beta-1}Z_{j',l'}\\
&\quad+\sum_{j'=0}^\infty\big(\frac{\partial  c_{j',l'}^i}{\partial \xi_{j,l}^i}-\frac{\partial c_{j',l'}}{\partial \xi_{j,l}}\big)(w_{j'}^i)^{\beta-1}Z_{j',l'}^i+\sum_{i'\neq i}\frac{\partial c_{j',l'}^{i'}}{\partial \xi_{j,l}^i}(w_{j'}^{i'})^{\beta-1}Z_{j',l'}^{i'}.
\end{split}\end{equation*}
Neglecting the terms in the last line, taking into account the estimates in Section \ref{sec3}, the estimates for $\frac{\partial \phi}{\partial r_j^i}$ in Lemma \ref{lemma501} and the estimates in Lemma \ref{lemma502}, one can find that the right hand side of the above equation can be bounded by $e^{-\frac{(n-2\gamma)L}{2}(1+\xi)}e^{-\sigma t_j^i}$ in $\|\cdot\|_{{**}_\sigma}$ norm.

We first consider the case $l=0$, i.e. $\xi_{j,l}^i=r_j^i$. In this case, to have control on $\frac{\partial \phi}{\partial r_j^i}-\frac{\partial \bar{\phi}_i}{\partial r_j^i}$, we can reason similarly as in Lemma \ref{lemma501}. More precisely, we first set
\begin{equation*}
\hat{\phi}=\frac{\partial \phi}{\partial r_j^i}-\frac{\partial \bar{\phi}_i}{\partial r_j^i}+\sum_{i',j',l'} \alpha_{j',l'}^{i'}Z_{j',l'}^{i'}.
\end{equation*}
In order to get the orthogonality condition $\int_{\R^n}\hat{\phi}\,(w_{J}^{I})^{\beta-1}Z_{J,L}^{I}\,dx=0$ for every $I,J,L$. we need
\begin{equation}\label{ecuacion13}
\int_{\R^n}\Big[\frac{\partial \phi}{\partial r_j^i}-\frac{\partial \bar{\phi}_i}{\partial r_j^i}\Big](w_{J}^{I})^{\beta-1}Z_{J,L}^{I}\,dx=\sum_{i',j',l'} \alpha_{j',l'}^{i'}\int_{\R^n}(w_{J}^{I})^{\beta-1}Z_{j',l'}^{i'}Z_{J,L}^I\,dx.
\end{equation}
Differentiating the orthogonality condition of $\phi$ and $\bar{\phi}_i$ w.r.t $r_j^i$, in analogy with (\ref{eq503}), we get
\begin{equation*}\begin{split}
\Big|\int_{\R^n}\Big[\frac{\partial \phi}{\partial r_j^i}-\frac{\partial \bar{\phi}_i}{\partial r_j^i}\Big](w_{J}^{I})^{\beta-1}Z_{J,L}^{I}\,dx\Big|
&=\sum_{I,J,L}\Big|\int_{\R^n}(\phi-\bar{\phi}_i)\frac{\partial }{\partial r_j^i}[(w_{J}^{I})^{\beta-1}Z_{J,L}^{I}]\,dx\Big|\\
&\leq \left\{\begin{array}{l}
Ce^{-\frac{(n-2\gamma)L}{4}(1+\xi)}e^{-\sigma t^i_j} \ \mbox{ if }j=J,\\
0 \ \mbox{ if }j\neq J,
\end{array}
\right.
\end{split}\end{equation*}
where we have used the fact that $\bar{\phi}_i=0$ for $u_{L_i}$.
Therefore, from \eqref{ecuacion13} we have the following estimates:
\begin{equation}\label{equation502}
|\alpha_{j',l'}^{i'}|\leq \left\{\begin{array}{l}
Ce^{-\frac{(n-2\gamma )L}{4}(1+\xi)}e^{-\sigma t^i_j} \ \mbox{ if }i'=i,\ j'=j,\\
Ce^{-\frac{(n-2\gamma )L}{4}(1+\xi)}e^{-\sigma |t_j^i-t_{j'}^i|} \ \mbox{ if }i=i', \ j'\neq j,\\
0\mbox{ if }i\neq i'.
\end{array}
\right.
\end{equation}
Moreover, $\hat{\phi}$ solves
\begin{equation*}\begin{split}
\mathbb L_{\bar{u}_i^0}(\hat{\phi})&=\mathbb L_{\bar{u}_i^0}\big(\frac{\partial \phi}{\partial r_{j}^i}-\frac{\partial \bar{\phi}_i}{\partial r_j^i}\big)
+\mathbb L_{\bar{u}_i^0}(\sum_{j',l'} \alpha_{j',l'}^{i} Z_{j',l'}^{i})\\
&=\mathbb L_{\bar{u}_i^0}\big(\frac{\partial \phi}{\partial r_j^i}-\frac{\partial \bar{\phi}_i}{\partial r_j^i}\big)-c_{n,\gamma}\beta\sum_{j',l'}\alpha_{j',l'}^i((\bar{u}_i^0)^{\beta-1}-(w_{j'}^i)^{\beta-1})Z_{j',l'}^i.
\end{split}\end{equation*}
By estimate (\ref{equation502}), we know that the second term is bounded by $e^{-\frac{(n-2\gamma)L}{2}(1+\xi)}e^{-\sigma t_j^i}$ in $\|\cdot\|_{{**}_\sigma}$ norm. So the right hand side of the above equation can be controlled by $e^{-\frac{(n-2\gamma)L}{4}(1+\xi)}e^{-\sigma t^i_j } $ in $\|\cdot\|_{{**}_\sigma}$ norm, and thus, applying Proposition \ref{proposition301} to $\hat \phi$,
\begin{equation*}
\|\hat{\phi}\|_{*_\sigma}\leq Ce^{-\frac{(n-2\gamma)L}{4}(1+\xi)}e^{-\sigma t^i_j }.
\end{equation*}
Looking back at $\eqref{hat-phi}$, from the estimates for $\alpha_{j,l}^i$ we have
\begin{equation*}
\Big\|\frac{\partial \phi}{\partial r_j^i}-\frac{\partial \bar{\phi}_i}{\partial r_j^i}\Big\|_{*_\sigma}\leq Ce^{-\frac{(n-2\gamma)L}{4}(1+\xi)}e^{-\sigma t^i_j }.
\end{equation*}

A similar argument yields
\begin{equation*}
\Big\|\frac{\partial \phi}{\partial a_j^i}-\frac{\partial \bar{\phi}_i}{\partial a_j^i}\Big\|_{*_\sigma}\leq \frac{C}{\lambda_j^i}e^{-\frac{(n-2\gamma)L}{4}(1+\xi)}e^{-\sigma t^i_j }.
\end{equation*}
The proof of the Lemma is completed.
\end{proof}

From the previous lemma we can obtain estimates on derivatives of $\beta_{j,l}^i$ with respect to $\xi_{j,l}^i$.
\begin{lemma}\label{lemma504}
In the previous setting, we have the following estimates
\begin{equation*}
\Big|\frac{\partial \beta_{j',l'}^i}{\partial \xi_{j,l}^i}-\frac{\partial \beta_{j',l'}}{\partial \xi_{j,l}}\Big|
\leq \left\{\begin{array}{l}
Ce^{-\frac{(n-2\gamma)L}{2}(1+\xi)}e^{-\sigma t^i_j}e^{-\sigma |t^i_j-t^i_{j'}|} \ \mbox{ if }l=0,\\
\frac{C}{\lambda_j^i}e^{-\frac{(n-2\gamma)L}{2}(1+\xi)}e^{-\sigma t^i_j}e^{-\sigma |t^i_j-t^i_{j'}|} \ \mbox{ if }l\geq 1,
\end{array}
\right. \mbox{ for }j\geq 1.
\end{equation*}
\end{lemma}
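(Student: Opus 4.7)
The plan is to mimic the derivation used in Lemma \ref{lemma502}, differentiating the two defining integrals and comparing them term by term, using the tools assembled in Lemmas \ref{lemma501}, \ref{lemma502} and \ref{lemma503}.

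I would first differentiate
\begin{equation*}
\beta_{j',l'}^i=\int_{\R^n}\big[(-\Delta)^\gamma(\bar u_i^0+\phi)-c_{n,\gamma}(\bar u_i^0+\phi)^\beta\big]Z_{j',l'}^i\,dx
\end{equation*}
with respect to $\xi_{j,l}^i$ and split the result into three pieces: (i) a linearised term $\int\tilde{\mathbb L}(\partial_{\xi_{j,l}^i}\phi)\,Z_{j',l'}^i\,dx$ with $\tilde{\mathbb L}$ as in \eqref{eq501}; (ii) a source term $\int\partial_{\xi_{j,l}^i}S(\bar u_i^0)\,Z_{j',l'}^i\,dx$; (iii) a Lagrange multiplier term $\sum_{j'',l''}c_{j'',l''}^i\int(w_{j''}^i)^{\beta-1}Z_{j'',l''}^i\,\partial_{\xi_{j,l}^i}Z_{j',l'}^i\,dx$. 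The analogous computation for $\beta_{j',l'}$ in the single-Delaunay setting, evaluated at $\xi=0$, yields only the analogues of (i) and (ii) since $\bar\phi_i\equiv 0$ and all $c_{j,l}$ vanish at the exact Delaunay; moreover, $\partial_{\xi_{j,l}}S(u_{L_i})$ was already made explicit in the proof of Lemma \ref{lemma502}.

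Next, I would take the difference of the two expressions and estimate each contribution separately. For (i), moving $\tilde{\mathbb L}$ onto $Z_{j',l'}^i$ and using $\mathbb L_{j'}^iZ_{j',l'}^i=0$ reduces the piece to
\begin{equation*}
c_{n,\gamma}\beta\int_{\R^n}\big[(w_{j'}^i)^{\beta-1}-(\bar u_i^0+\phi)^{\beta-1}\big]\,\partial_{\xi_{j,l}^i}\phi\,\,Z_{j',l'}^i\,dx
\end{equation*}
plus its single-Delaunay analogue. Taking the difference, replacing $\partial_{\xi_{j,l}^i}\phi$ by $\partial_{\xi_{j,l}}\bar\phi_i$ up to the remainder controlled by Lemma \ref{lemma503}, and using the decay of $(w_{j'}^i)^{\beta-1}-(\bar u_i^0+\phi)^{\beta-1}$ away from the spike at $p_i$ yields the bound with the extra factor $e^{-\sigma|t_j^i-t_{j'}^i|}$ produced by the same interaction integrals as in Sections \ref{sec3}--\ref{sec4}. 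For (ii), I would decompose
\begin{equation*}
\bar u_i^0=u_{L_i}+\sum_{i'\neq i}\sum_{j''\geq 0}w_{j''}^{i'}+\text{cutoff corrections from }\phi_i
\end{equation*}
near $p_i$, so that $\partial_{\xi_{j,l}^i}S(\bar u_i^0)-\partial_{\xi_{j,l}}S(u_{L_i})$ reduces to interactions of $\partial_{\xi_{j,l}^i}w_j^i$ with the half-towers centred at the remaining points $p_{i'}$, $i'\neq i$. These interactions are uniformly of order $e^{-\frac{(n-2\gamma)L}{4}(1+\xi)}$ and, after pairing with $Z_{j',l'}^i$, pick up the extra $e^{-\sigma|t_j^i-t_{j'}^i|}$. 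Finally (iii) is dealt with using the bounds of Lemma \ref{lemma403} for $c_{j'',l''}^i$ combined with the pointwise control of $\partial Z_{j',l'}^i/\partial\xi_{j,l}^i$ read off from its explicit formula.

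I expect the main technical obstacle to be the sharp tracking of the factor $1/\lambda_j^i$ appearing in the case $l\geq 1$. It comes from $Z_{j,l}^i=-\lambda_j^i\partial_{x_l}w_j^i$ and hence $\partial w_j^i/\partial a_{j,l}^i\sim(\lambda_j^i)^{-1}|x-p_i|^{-\tfrac{n-2\gamma}{2}}(v_j^i)^{1+\tfrac{2}{n-2\gamma}}$; propagating this singular scaling through (ii) while retaining the full decay $e^{-\frac{(n-2\gamma)L}{2}(1+\xi)}e^{-\sigma t^i_j}e^{-\sigma|t^i_j-t^i_{j'}|}$ on the right-hand side is the delicate point. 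The remaining bookkeeping is a direct adaptation of Lemma \ref{lemma502} to the multi-point configuration, carried out throughout in the weighted norms $\|\cdot\|_{*_\sigma}$, $\|\cdot\|_{**_\sigma}$ introduced in Lemma \ref{lemma501}.
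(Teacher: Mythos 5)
Your plan matches the paper's proof in all essentials: differentiate both expressions with respect to $\xi_{j,l}^i$, take the difference, and bound the resulting linearised, source, and Lagrange-multiplier contributions using Lemmas \ref{lemma501}, \ref{lemma502}, \ref{lemma503} together with the interaction estimates of Sections \ref{sec3}--\ref{sec4}. The one simplification you do not exploit is that the paper kills the multiplier term (your piece (iii)) not by estimation but by an oddness observation: only the $j'=j$ contribution survives the derivative $\partial_{\xi_{j,l}^i}Z_{j',l'}^i$, and then the integrand $(w_j^i)^{\beta-1}Z_{j,l}^i\,\partial_{\xi_{j,l}^i}Z_{j,l'}^i$ is odd (in $t-t^i_j$ when $l=0$, and in $(x-p_i)_l$ when $l\geq 1$), so the integral vanishes identically, which avoids having to track the $(1+\xi)$-gain through the bounds on $c^i_{j'',l''}$.
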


\begin{proof}
Recall that by the definition of $\beta_{j,l}^i$ and $\beta_{j,l}$,
\begin{eqnarray*}
&&\beta_{j,l}^i=\int_{\R^n}[(-\Delta )^\gamma (\bar{u}_i^0+\phi)-c_{n,\gamma}(\bar{u}_i^0+\phi)^\beta]{Z}_{j,l}^i\,dx,\\
&&\beta_{j,l}=\int_{\R^n}[(-\Delta )^\gamma (\bar{u}_{L_i}+\bar{\phi}_i)-c_{n,\gamma}(\bar{u}_{L_i}+\bar{\phi}_i)^{\beta}]{Z}_{j,l}^i\,dx.
\end{eqnarray*}
Differentiating the above equations w.r.t $\xi_{j,l}^i$ and taking the difference, one has
\begin{equation*}\begin{split}
\frac{\partial }{\partial \xi_{j,l}^i}&(\beta_{j',l'}^i-\beta_{j',l'})\\
&=\int_{\R^n}\mathbb L_{\bar{u}_i^0}(\frac{\partial \phi}{\partial \xi_{j,l}^i}-\frac{\partial \bar{\phi}_i}{\partial \xi_{j,l}^i}){Z}_{j',l'}^i\,dx-c_{n,\gamma}\beta\int_{\R^n}((\bar{u}_i^0)^{\beta-1}-u_{L_i}^{\beta-1})\frac{\partial \bar{\phi}_i}{\partial \xi_{j,l}^i}{Z}_{j',l'}^i\,dx\\
&-c_{n,\gamma}\beta\int_{\R^n }(u_{L_i}^{\beta-1}-(\bar{u}_i^0+\phi)^{\beta-1})\frac{\partial w_j^i}{\partial \xi_{j,l}^i}{Z}_{j',l'}^i\,dx-c_{n,\gamma}\beta\int_{\R^n}[(\bar{u}_i^0+\phi)^{\beta-1}-(\bar{u}_i^0)^{\beta-1}]\frac{\partial \phi}{\partial \xi_{j,l}^i}{Z}_{j',l'}^i\,dx\\
&+c_{j,l}^i\int_{\R^n}(w_j^i)^{\beta-1}Z_{j,l}^i\frac{\partial}{\partial \xi_{j,l}^i}{Z}_{j',l'}^i\,dx.
\end{split}\end{equation*}
By oddness, one can first get that the term in the last line vanishes. Moreover, by the estimates in Lemma \ref{lemma501} and \ref{lemma503}, one can get that the first two lines can be controlled by $e^{-\frac{(n-2\gamma)L}{2}(1+\xi)}e^{-\sigma t^i_j}e^{-\sigma |t^i_j-t^i_{j'}|}$ when $l=0$ and $\frac{1}{\lambda_j^i}e^{-\frac{(n-2\gamma)L}{2}(1+\xi)}e^{-\sigma t^i_j}e^{-\sigma |t^i_j-t^i_{j'}|}$ when $l\geq 1$. Thus one has
\begin{equation*}
\Big|\frac{\partial }{\partial \xi_{j,l}^i}(\beta_{j',l'}^i-\beta_{j',l'})\Big|\leq \left\{\begin{array}{l}
Ce^{-\frac{(n-2\gamma)L}{2}(1+\xi)}e^{-\sigma t^i_j}e^{-\sigma |t^i_j-t^i_{j'}|} \ \mbox{ if }l=0,\\
\frac{C}{\lambda_j^i}e^{-\frac{(n-2\gamma)L}{2}(1+\xi)}e^{-\sigma t^i_j}e^{-\sigma |t^i_j-t^i_{j'}|} \ \mbox{ if } l\geq 1,
\end{array}
\right. \mbox{ for }j\geq 1.
\end{equation*}
The proof is completed.
\end{proof}

\subsection{Derivatives of the numbers $\beta_{j,l}^i$ for the general parameters $a_j^i,r_j^i$}

In this subsection, we consider the derivatives of the $\beta_{j,l}^i$'s for  the general parameters $a_j^i, r_j^i$ satisfying \eqref{parar}-\eqref{para2}. We write the counterpart of Lemmas \ref{lemma503} and \ref{lemma504}, but we do not prove them since the methods are quite similar. In the following, we assume that $\tau<\sigma$.
\begin{lemma}\label{lemma505}
Suppose $a_j^i, r_j^i$ satisfy \eqref{parar}-\eqref{para2}. For $L$ large, let $\bar{u}$ and $\phi$ be as in Proposition \ref{proposition301}, then we have the following estimates for $\frac{\partial \phi}{\partial \xi_{j,l}^i}-\frac{\partial \bar{\phi}_i}{\partial \xi_{j,l}}$:
\begin{equation*}
\Big\|\frac{\partial \phi}{\partial \xi_{j,l}^i}-\frac{\partial \bar{\phi}_i}{\partial \xi_{j,l}}\Big\|_{*_\sigma}\leq C\left\{\begin{array}{l}
Ce^{-\frac{(n-2\gamma)L}{4}(1+\xi)}e^{-\tau t_j^i} \ \mbox{ if }l=0,\\
\frac{C}{\lambda_j^i}e^{-\frac{(n-2\gamma)L}{4}(1+\xi)}e^{-\tau t_j^i} \ \mbox{ if }l\geq 1,
\end{array}
\right.
\end{equation*}
for $j\geq 1$ if we choose  $\tau<\sigma$.

\end{lemma}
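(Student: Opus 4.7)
The strategy is to adapt the proof of Lemma \ref{lemma503} to the general-parameter setting, carefully propagating the smallness bounds \eqref{para2} through every step so that an extra $e^{-\tau t_j^i}$ factor appears in the final estimate. Set $\Phi:=\frac{\partial\phi}{\partial\xi_{j,l}^i}-\frac{\partial\bar{\phi}_i}{\partial\xi_{j,l}}$. Differentiating the defining equations of Proposition \ref{proposition301} and \eqref{equation100} with respect to $\xi_{j,l}^i$ and subtracting, I would obtain a linear equation $\mathbb{L}_{\bar{u}}(\Phi)=R+\sum_{i',j',l'}\frac{\partial(c_{j',l'}^{i'}-c_{j',l'})}{\partial\xi_{j,l}^i}(w_{j'}^{i'})^{\beta-1}Z_{j',l'}^{i'}$, where $R$ collects three kinds of contributions: (i) factors of the form $[(\bar{u}+\phi)^{\beta-1}-(\bar{u}_{L_i})^{\beta-1}]\frac{\partial w_j^i}{\partial\xi_{j,l}^i}$ and $[(\bar{u})^{\beta-1}-(\bar{u}_{L_i})^{\beta-1}]\frac{\partial\bar{\phi}_i}{\partial\xi_{j,l}}$, (ii) the nonlinear remainder $[(\bar{u}+\phi)^{\beta-1}-\bar{u}^{\beta-1}]\frac{\partial\phi}{\partial\xi_{j,l}^i}$, (iii) the cross-point pieces coming from the singularities $i'\neq i$ that are present in $\bar{u}$ but not in $\bar{u}_{L_i}$.

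The key observation, and the place where the generalisation differs from Lemma \ref{lemma503}, is that $\bar{u}$ now differs from $\bar{u}_{L_i}$ near $p_i$ by quantities controlled by $\sum_{j'}(|r_{j'}^i|+|\tilde{a}_{j'}^i|/\lambda_{j'}^i)\,|\partial w_{j'}^i|$, which by \eqref{para2} all carry an overall factor $e^{-\tau t_{j'}^i}$, and $\bar{\phi}_i$ is no longer identically zero but inherits a similar bound from the single-point analogue of Proposition \ref{proposition301}. Away from $p_i$, the cross-point contributions are dominated by the exponential smallness $e^{-\frac{(n-2\gamma)L}{2}(1+\xi)}$ and beat any $\tau$-decay. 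Combining these observations with Lemmas \ref{lemma501}, \ref{lemma502}, \ref{lemma503}, \ref{lemma504} and with the bounds on $c_{j',l'}^i-c_{j',l'}$ extractable from Lemma \ref{lemma403}, one obtains $\|R\|_{**_\sigma}\leq Ce^{-\frac{(n-2\gamma)L}{4}(1+\xi)}e^{-\tau t_j^i}$ when $l=0$, and the same bound divided by $\lambda_j^i$ when $l\geq 1$.

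To apply the a priori theory of Lemma \ref{lemma301} in the weighted spaces $\|\cdot\|_{*_\sigma}$, $\|\cdot\|_{**_\sigma}$, I must restore the orthogonality of the unknown against $(w_J^I)^{\beta-1}Z_{J,L}^I$. As in Lemma \ref{lemma503}, I would set $\hat{\Phi}:=\Phi+\sum_{i',j',l'}\alpha_{j',l'}^{i'}Z_{j',l'}^{i'}$ and determine the $\alpha$'s from $\int_{\R^n}\hat{\Phi}(w_J^I)^{\beta-1}Z_{J,L}^I\,dx=0$. Differentiating the orthogonality relations satisfied by $\phi$ and $\bar{\phi}_i$, the relevant integrals reduce to $-\int(\phi-\bar{\phi}_i)\,\frac{\partial}{\partial\xi_{j,l}^i}\bigl[(w_J^I)^{\beta-1}Z_{J,L}^I\bigr]\,dx$, and a pointwise bound for $\phi-\bar{\phi}_i$ obtained from Proposition \ref{proposition301} applied to both configurations, combined with \eqref{para2}, gives $|\alpha_{j',l'}^{i'}|\leq Ce^{-\frac{(n-2\gamma)L}{4}(1+\xi)}e^{-\tau t_j^i}e^{-\sigma|t_j^i-t_{j'}^{i'}|}$. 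The projected equation for $\hat{\Phi}$ then has right-hand side of the same size as $R$, Lemma \ref{lemma301} yields $\|\hat{\Phi}\|_{*_\sigma}\leq Ce^{-\frac{(n-2\gamma)L}{4}(1+\xi)}e^{-\tau t_j^i}$ (times $1/\lambda_j^i$ when $l\geq 1$), and subtracting the $\sum\alpha Z$ correction closes the estimate.

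The main obstacle is the careful book-keeping required to ensure that every piece of $R$ and of the correction coefficients $\alpha_{j',l'}^{i'}$ retains the full factor $e^{-\tau t_j^i}$ after being integrated against kernels centred at bubbles different from the $j$-th one. This is precisely where the hypothesis $\tau<\sigma$ enters: the cross sums $\sum_{j'}e^{-\tau t_{j'}^i}e^{-\sigma|t_j^i-t_{j'}^i|}$, which appear both when bounding $\bar{u}-\bar{u}_{L_i}$ against the $Z_{j,l}^i$ and when summing the $\alpha$-corrections, can be majorised by $Ce^{-\tau t_j^i}$ only under the strict inequality $\tau<\sigma$.
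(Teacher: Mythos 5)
The paper does not actually give a proof of Lemma~\ref{lemma505}: immediately before its statement the authors write ``We write the counterpart of Lemmas~\ref{lemma503} and~\ref{lemma504}, but we do not prove them since the methods are quite similar,'' and the hypothesis $\tau<\sigma$ is likewise just recorded there. So there is no ``paper proof'' to compare against; what you have produced is a filled-in version of what the authors indicate should be a routine adaptation of the proof of Lemma~\ref{lemma503}.

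Your proposal is consistent with that indication. You correctly identify the two structural changes relative to Lemma~\ref{lemma503}: first, $\bar u_{L_i}\neq u_{L_i}$ so that $\bar\phi_i$ and the coefficients $c_{j,l}$ no longer vanish, and second, the bounds \eqref{para2} are precisely what make the extra perturbative contributions carry the exponential factor $e^{-\tau t_{j}^i}$. The overall mechanism — subtract the two differentiated equations, collect the error $R$, restore orthogonality with a correction $\sum\alpha Z$, control the $\alpha$'s via the differentiated orthogonality relations, and invert as in Lemma~\ref{lemma301}/Lemma~\ref{lemma503} in the $\|\cdot\|_{*_\sigma}$ spaces — is exactly the strategy of Lemma~\ref{lemma503}, and your use of $\tau<\sigma$ to sum the cross terms $\sum_{j'}e^{-\tau t_{j'}^i}e^{-\sigma|t_j^i-t_{j'}^i|}\lesssim e^{-\tau t_j^i}$ is where that hypothesis genuinely enters. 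One small caveat: your stated bound $|\alpha_{j',l'}^{i'}|\leq Ce^{-\frac{(n-2\gamma)L}{4}(1+\xi)}e^{-\tau t_j^i}e^{-\sigma|t_j^i-t_{j'}^{i'}|}$ is somewhat sharper in form than the corresponding bound \eqref{equation502} in the proof of Lemma~\ref{lemma503} (which for $j'\neq j$ carries only the $e^{-\sigma|t_j^i-t_{j'}^i|}$ factor); the weaker $\tau$-decay is all that is actually required for the conclusion, so this does not affect the argument, but you should not assert the product form without justifying it. Apart from that cosmetic point, this is a correct elaboration of the proof the paper chose to omit, and it is essentially the same approach the authors had in mind.
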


\begin{lemma}\label{lemma506}

Suppose $a_j^i, r_j^i$ satisfy \eqref{parar}  - \eqref{para2}. For $L$ large, let $\bar{u}$ and $\phi$ be as in Proposition \ref{proposition301}, then we have the following estimates
\begin{equation*}
\Big|\frac{\partial }{\partial \xi_{j,l}^i}(\beta_{j',l'}^i-\beta_{j',l'})\Big|
\leq \left\{\begin{array}{l}
Ce^{-\frac{(n-2\gamma)L}{2}(1+\xi)}e^{-\tau t^i_j}e^{-\sigma |t^i_j-t^i_{j'}|} \ \mbox{ if }l=0,\\
\frac{C}{\lambda_j^i}e^{-\frac{(n-2\gamma)L}{2}(1+\xi)}e^{-\tau t^i_j}e^{-\sigma |t^i_j-t^i_{j'}|} \ \mbox{ if } l\geq 1,
\end{array}
\right.
\end{equation*}
for $j\geq 1$ where $\xi>0$ is a positive constant independent of $L$ large.
\end{lemma}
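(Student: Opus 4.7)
The plan is to mimic the proof of Lemma \ref{lemma504} almost verbatim, paying attention only to the step where the decay rate $\sigma$ of the perturbations in the ideal (vanishing parameters) case is replaced by the decay rate $\tau$ coming from the assumption \eqref{para2} on the general parameters.

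First I would differentiate the defining integrals
\begin{equation*}
\beta_{j',l'}^i=\int_{\R^n}[(-\Delta)^\gamma(\bar u+\phi)-c_{n,\gamma}(\bar u+\phi)^\beta]Z_{j',l'}^i\,dx
\end{equation*}
and
\begin{equation*}
\beta_{j',l'}=\int_{\R^n}[(-\Delta)^\gamma(\bar u_{L_i}+\bar\phi_i)-c_{n,\gamma}(\bar u_{L_i}+\bar\phi_i)^\beta]Z_{j',l'}^i\,dx
\end{equation*}
with respect to $\xi_{j,l}^i$ and subtract. Exactly as in the proof of Lemma \ref{lemma504}, the difference $\partial_{\xi_{j,l}^i}(\beta_{j',l'}^i-\beta_{j',l'})$ decomposes into five pieces: (a) an integral of $\mathbb L_{\bar u}\bigl(\partial_{\xi_{j,l}^i}\phi-\partial_{\xi_{j,l}^i}\bar\phi_i\bigr)$ against $Z_{j',l'}^i$; (b) a piece involving the difference $(\bar u)^{\beta-1}-u_{L_i}^{\beta-1}$ times $\partial_{\xi_{j,l}^i}\bar\phi_i$; (c) a piece involving $u_{L_i}^{\beta-1}-(\bar u+\phi)^{\beta-1}$ times $\partial_{\xi_{j,l}^i}w_j^i$; (d) a piece involving $(\bar u+\phi)^{\beta-1}-(\bar u)^{\beta-1}$ times $\partial_{\xi_{j,l}^i}\phi$; and finally (e) a term containing the coefficients $c_{j,l}^i$ which vanishes by oddness in the same way as before.

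Next I would estimate each of the surviving four pieces in the norms $\|\cdot\|_{*_\sigma}$ and $\|\cdot\|_{**_\sigma}$ of Section \ref{sec5}. Piece (a) is controlled by Lemma \ref{lemma505}, which provides the key bound
\begin{equation*}
\Big\|\tfrac{\partial\phi}{\partial \xi_{j,l}^i}-\tfrac{\partial\bar\phi_i}{\partial \xi_{j,l}}\Big\|_{*_\sigma}\leq C(\lambda_j^i)^{-\delta_{l\neq 0}}e^{-\frac{(n-2\gamma)L}{4}(1+\xi)}e^{-\tau t_j^i},
\end{equation*}
so that after integrating against $Z_{j',l'}^i$ (which carries the extra factor $e^{-\sigma|t_j^i-t_{j'}^i|}$ thanks to its asymptotic shape) one picks up the desired spatial decay $e^{-\sigma|t_j^i-t_{j'}^i|}$. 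Pieces (b), (c) and (d) are treated by noting that the ``global'' corrections
\[
\bar u - u_{L_i},\qquad \bar u + \phi - u_{L_i},\qquad \phi
\]
are all of size $e^{-\frac{(n-2\gamma)L}{4}(1+\xi)}$ in the relevant weighted norm, with the \emph{extra} factor $e^{-\tau t_j^i}$ appearing each time we differentiate in $\xi_{j,l}^i$ (since the $j$-th bubble is perturbed through parameters of size $e^{-\tau t_j^i}$). These estimates are exactly the ones already established in the proof of Lemma \ref{lemma403}, now combined with Lemma \ref{lemma505}. Putting the four pieces together produces the factor $e^{-\tau t_j^i}e^{-\sigma|t_j^i-t_{j'}^i|}$ (respectively $\lambda_j^{-1}$ times this, when $l\geq 1$, because $Z_{j,l}^i$ carries an extra $\lambda_j^i$), which is the claimed bound.

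The only place where one must be careful is the assumption $\tau<\sigma$: without it, the product $e^{-\tau t_j^i}e^{-\sigma|t_j^i-t_{j'}^i|}$ could not be absorbed into the contraction mapping argument used later in Section \ref{sec6}, and the weighted estimates of Lemma \ref{lemma505} would not close in the $\|\cdot\|_{*_\sigma}$ norm. This is the main, and essentially only, technical point; once one has accepted it, the argument is a routine adaptation of the proof of Lemma \ref{lemma504}. I therefore expect the main obstacle to be bookkeeping the powers of $\lambda_j^i$, $e^{-\tau t_j^i}$ and $e^{-\sigma|t_j^i-t_{j'}^i|}$ carefully through the four pieces, rather than any new analytic input.
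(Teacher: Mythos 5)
The paper does not actually prove Lemma \ref{lemma506}; it simply states it and remarks that ``we do not prove them since the methods are quite similar'' to Lemmas \ref{lemma503} and \ref{lemma504}. Your proposal is a faithful rendering of that implicit argument: you reproduce the five-term decomposition from the proof of Lemma \ref{lemma504}, feed in Lemma \ref{lemma505} (the general-parameter analogue of Lemma \ref{lemma503}) to get $e^{-\tau t_j^i}$ in place of $e^{-\sigma t_j^i}$ for piece (a), and observe that the oddness cancellation for piece (e) is unchanged. You also correctly flag that the standing assumption $\tau<\sigma$ (stated by the paper just before Lemma \ref{lemma505}) is what lets the weighted estimates close. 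This is exactly the route the authors intend.

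Two small imprecisions are worth noting, though neither is a gap. First, for piece (a) the factor $e^{-\sigma|t_j^i-t_{j'}^i|}$ is not ``carried by $Z_{j',l'}^i$'' but by the weight $e^{\sigma|t^i-t_j^i|}$ built into the $\|\cdot\|_{*_\sigma}$ norm: Lemma \ref{lemma505} bounds $\partial_{\xi_{j,l}^i}\phi-\partial_{\xi_{j,l}^i}\bar\phi_i$ with that spatial decay about $t_j^i$, and integrating against $Z_{j',l'}^i$ (which is concentrated near $t_{j'}^i$) produces the coupling factor. Second, your citation of Lemma \ref{lemma403} for the estimates needed on pieces (b)--(d) is not quite the right reference; what is used there (following the explicit argument in the proof of Lemma \ref{lemma504}) are the bounds on $\partial_{\xi_{j,l}^i}\phi$ and $\partial_{\xi_{j,l}^i}\bar\phi_i$ in the $*_\sigma$ norm from Lemma \ref{lemma501} (and its general-parameter analogue), combined with the size estimates on $\bar u-u_{L_i}$, $\phi$, etc.\ established in Section \ref{sec3}. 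These are bookkeeping points only; the architecture of your argument matches the paper's and the conclusion follows.
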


\section{Proof of the main theorem}\label{sec6}

In this section we prove our main results. We keep the notation and assumptions in the previous sections. Before we start, we define some notation:
\begin{equation*}
\tilde{\bf a}^i=(\tilde{a}_0^i, \cdots, \tilde{a}_j^i,\cdots)^t, \quad {\bf r}^i=(r_0^i,r_1^i,\cdots,r_j^i,\cdots)^t,
\end{equation*}
\begin{equation*}
T_{\tilde{a}}^i(\tilde{\bf a}^i)=T_{\tilde{a}}^i \tilde{\bf a}^i, \quad T_r^i({\bf r}^i)=T_r^i {\bf r}^i,
\end{equation*}
where
\begin{equation*}
T_{\tilde{a}}^i=\left(\begin{array}{ccccccc}
-1&1+e^{-2L_i}&-e^{-2L_i}&0&\cdots&\cdots&\vdots\\
0&-1&1+e^{-2L_i}&-e^{-2L_i}&0&\cdots&\vdots\\
0&0&-1&1+e^{-2L_i}&-e^{-2L_i}&0&\vdots\\
\cdots&\cdots&\cdots&\cdots&\cdots&\cdots&\cdots\\
\end{array}
\right)
\end{equation*}
and
\begin{equation*}
T_r^i=\left(\begin{array}{ccccccc}
-1&2&-1&0&\cdots&\cdots&\vdots\\
0&-1&2&-1&0&\cdots&\vdots\\
0&0&-1&2&-1&0&\vdots\\
\cdots&\cdots&\cdots&\cdots&\cdots&\cdots&\cdots\\
\end{array}
\right).
\end{equation*}
For $\tau>0$, let us also introduce the weighted norm and space
\begin{equation*}
|(x_j)|_\tau=\sup_j e^{(2j+1)\tau}|x_j|
\end{equation*}
and
\begin{equation*}
\ell_\tau=\{(\tilde a_j^i, r_j^i): |(\tilde a_j^i)|_\tau+|(r_j^i)|_\tau< +\infty\}.
\end{equation*}

At first glance, these infinite dimensional matrices are not invertible, since they have the trivial kernel $(1,1,\cdots)^t$, but they are indeed invertible in some suitable weighted norm, which is given in the following:
\begin{lemma}\label{lemma601}
The operators $T_{\tilde{a}}^i$, $T_r^i$ have inverse, whose norm  can be bounded by $Ce^{-2\tau}$.

\end{lemma}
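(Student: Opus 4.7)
Both $T_{\tilde a}^i$ and $T_r^i$ are second-order linear difference operators on sequences, so the strategy is threefold: (i) identify the two fundamental solutions of each associated homogeneous recurrence and observe that only the trivial one lies in $\ell_\tau$, (ii) write down an explicit inverse by summation-by-parts under the decay constraint imposed by the weighted norm, and (iii) estimate the resulting sums geometrically.

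\emph{Step 1: Injectivity.} For $T_r^i$, the homogeneous equation $-r_j+2r_{j+1}-r_{j+2}=0$ has characteristic polynomial $x^2-2x+1$ with a double root at $1$, so its general solution is $r_j=A+Bj$. For $T_{\tilde a}^i$, setting $\lambda:=e^{-2L_i}$, the characteristic polynomial $\lambda x^2-(1+\lambda)x+1=(\lambda x-1)(x-1)$ has roots $1$ and $\lambda^{-1}=e^{2L_i}$, giving the general solution $\tilde a_j=A+B e^{2jL_i}$. Since $\tau>0$ forces every element of $\ell_\tau$ to decay at least like $e^{-(2j+1)\tau}$, in both cases only the zero sequence is admissible, so both operators are injective on $\ell_\tau$.

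\emph{Step 2: Explicit inversion.} Given $T_r^i\mathbf{r}=\mathbf{y}$, set $s_j:=r_{j+1}-r_j$, so that the equation becomes $s_{j+1}-s_j=-y_j$. The decay requirement $s_j\to 0$ determines uniquely $s_j=\sum_{k\ge j}y_k$, and a second summation under $r_j\to 0$ produces the explicit formula
\begin{equation*}
r_j=-\sum_{k=j}^{\infty}(k-j+1)\,y_k.
\end{equation*}
For $T_{\tilde a}^i\tilde{\mathbf{a}}=\mathbf{y}$, the analogous factorization
\begin{equation*}
-\lambda(\tilde a_{j+2}-\tilde a_{j+1})+(\tilde a_{j+1}-\tilde a_j)=y_j
\end{equation*}
reads $s_j-\lambda s_{j+1}=y_j$ with $s_j:=\tilde a_{j+1}-\tilde a_j$. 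This is a first-order linear recursion whose decaying solution is $s_j=\sum_{k\ge 0}\lambda^k y_{j+k}$, followed by $\tilde a_j=-\sum_{k\ge j}s_k$.

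\emph{Step 3: Norm estimates.} Using $|y_k|\le e^{-(2k+1)\tau}|\mathbf{y}|_\tau$ together with the geometric identity $\sum_{m\ge 0}(m+1)x^m=(1-x)^{-2}$, the formula for $T_r^i$ gives
\begin{equation*}
|r_j|\le |\mathbf{y}|_\tau\, e^{-(2j+1)\tau}\sum_{m=0}^{\infty}(m+1)e^{-2m\tau}
=\frac{e^{-(2j+1)\tau}}{(1-e^{-2\tau})^2}|\mathbf{y}|_\tau,
\end{equation*}
while for $T_{\tilde a}^i$ a parallel computation, using $\lambda=e^{-2L_i}\le 1$, yields
\begin{equation*}
|\tilde a_j|\le \frac{e^{-(2j+1)\tau}}{(1-\lambda e^{-2\tau})(1-e^{-2\tau})}|\mathbf{y}|_\tau.
\end{equation*}
Multiplying both estimates by $e^{(2j+1)\tau}$ and taking the supremum in $j$ gives bounds on $|\mathbf{r}|_\tau$ and $|\tilde{\mathbf{a}}|_\tau$ of the claimed form (with the constant $C$ absorbing the factors $(1-e^{-2\tau})^{-2}$ and $(1-\lambda e^{-2\tau})^{-1}$, both of which are uniformly controlled for $\tau$ fixed and $L_i$ large).

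\emph{Main obstacle.} There is no substantial analytic obstacle: the whole point is that the weighted norm $|\cdot|_\tau$ is calibrated precisely so as to exclude the nontrivial homogeneous solutions (constant, linear, or exponentially growing), which makes the inverse uniquely defined and explicit. The only subtlety is the bookkeeping of the two successive telescoping summations and verifying that the geometric series converges, which it does thanks to the factor $e^{-2\tau}$ in the ratio and, for $T_{\tilde a}^i$, the additional smallness $\lambda=e^{-2L_i}\ll 1$ coming from largeness of $L_i$.
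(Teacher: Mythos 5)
Your proof is correct and follows essentially the same route as the paper: produce an explicit candidate inverse by two telescoping summations, then bound the resulting double geometric series. After interchanging the order of summation, your formula for $(T_{\tilde a}^i)^{-1}$ is precisely the paper's $\tilde a_j=\sum_{l\ge j+1}\big(\sum_{s=0}^{l-j-1}e^{-2L_is}\big)f_l$, up to an overall sign (your minus sign is in fact the correct one for solving $T_{\tilde a}^i\tilde{\mathbf a}=\mathbf f$, given the signs in the matrix). Two things you do differently, both improvements in completeness: your Step~1 gives injectivity directly from the homogeneous recurrence, so that injectivity plus a right inverse yields the two-sided inverse without the unwritten verification the paper relies on; and you treat $T_r^i$ from scratch, whereas the paper defers that case to Lemma~7.3 of \cite{m}. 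The one genuine mismatch is bookkeeping: the paper indexes the image sequence starting from $j=1$, so that $(T_{\tilde a}^i\tilde{\mathbf a})_j=-\tilde a_{j-1}+(1+\lambda)\tilde a_j-\lambda\tilde a_{j+1}$ for $j\ge 1$, consistent with the application in Proposition~\ref{prop:beta=0}, where one kills $\beta_{j,l}^i$ only for $j\ge 1$; with that convention the first term of the sum already carries the weight $e^{-(2j+3)\tau}$, which is exactly where the $e^{-2\tau}$ in the statement of the lemma comes from. You index the image from $j=0$, which replaces $e^{-(2j+3)\tau}$ by $e^{-(2j+1)\tau}$ and hence gives a bound $C(\tau)$ rather than $C(\tau)e^{-2\tau}$ as you assert; this is only a one-unit relabeling and not a substantive error, and the $e^{-2\tau}$ factor plays no essential role in the subsequent contraction argument, where the smallness comes from the forcing term $O(e^{-\frac{(n-2\gamma)L}{2}\xi})$ with $\tau<\xi$.
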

\begin{proof}
Given $(f_i)_{i\geq 1} $ with $|(f_j)_j|_\tau<\infty$, our goal is to solve $T_{\tilde a}^i({\bf \tilde a}^i)=(f_i)_i$. Defining
\begin{equation*}
\tilde a_j^i=\sum_{l=j+1}^\infty\Big(\sum_{s=0}^{l-j-1}e^{-2L_is}\Big)f_l,
\end{equation*}
one can easily check that the solution $\tilde a_j^i$ satisfies the required conditions and that the operator is an inverse of $T_{\tilde a}^i$ both from the left and from the right (here the index for $f $ starts from $1$, while the index for $a$ starts from $0$). Moreover, one has
\begin{equation*}
|\tilde a_j^i|\leq C|f_j|_\tau\sum_{l=j+1}^\infty\big(\sum_{s=0}^{l-j-1}e^{-2L_is}\big)e^{-(2l+1)\tau}\leq Ce^{-(2j+3)\tau}|f_j|_\tau,
\end{equation*}
which proves the result for $T_{\tilde a}^i$. The proof for the inverse for $T_r^i$ has been given in Lemma 7.3 of \cite{m}.

The lemma is proved.

\end{proof}

Recall that in Proposition \ref{proposition301} one has found a solution $u=\bar{u}+\phi$ for
\begin{equation*}
(-\Delta)^\gamma u-c_{n,\gamma}u^\beta=\sum_{i,j,l} c_{j,l}^i(w_j^i)^{\beta-1}Z_{j,l}^i.
\end{equation*}
The solvability of the original problem \eqref{eq101} is reduced to the following system of equations:
\begin{equation*}
\beta_{j,l}^i=\int_{\R^n}[(-\Delta)^\gamma u-c_{n,\gamma}u^\beta]{Z}_{j,l}^i\,dx=0,
\end{equation*}
for all $i=1,\cdots,k$, $j=0,\cdots,+\infty$, and $l=0,\cdots, n$.

Using the above lemma and a perturbation argument, we can prove the following result:

\begin{proposition}\label{prop:beta=0}
Given $\{R^i, \hat{a}_0^i, q_i\}$ satisfying \eqref{parar} and \eqref{paraa} with $L$ sufficiently large, if we choose $\tau<\min\{\xi, \sigma \}$, there exist $(\tilde{a}_j^i)_{i,j}$ and $(r_j^i)_{i,j}$ such that \eqref{para2} holds true with $\beta_{j,l}^i=0$ for $j\geq 1$ and all $l=0,\cdots,n$, $i=1,\cdots,k$.
\end{proposition}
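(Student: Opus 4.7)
My plan is to recast the system $\{\beta_{j,l}^i=0 : j\ge 1,\ l=0,\ldots,n,\ i=1,\ldots,k\}$ as a fixed-point equation in $\ell_\tau$ and apply Banach's contraction theorem. Lemma \ref{lemma403} shows that when evaluated at the trivial configuration $\tilde a_j^i=r_j^i=0$, the numbers $\beta_{j,l}^i$ have size $e^{-\frac{(n-2\gamma)L}{2}(1+\xi)}e^{-\sigma t_j^i}$ (up to a $\lambda_j^i$ factor for $l\ge 1$). Meanwhile, the derivative computations of Lemmas \ref{lemma502}, \ref{lemma504}, and \ref{lemma506} identify the \emph{tridiagonal band} of the infinite Jacobian $\partial \beta_{j,l}^i/\partial\xi_{j',l'}^{i'}$ precisely with the Toeplitz operators appearing in Lemma \ref{lemma601}: for $l=0$ the leading part is $-c_{n,\gamma}F'(L_i)\cdot T_r^i$, and for $l=1,\ldots,n$ it is a multiple of $\lambda_j^i\cdot T_{\tilde a}^i$ (the entries $1+e^{-2L_i}$ and $-e^{-2L_i}$ in $T_{\tilde a}^i$ come directly from the $\min\{\lambda_{j'}/\lambda_J,\lambda_J/\lambda_{j'}\}^{(n-2\gamma)/2}/\max\{\lambda_{j'}^2,\lambda_J^2\}$ factors of Lemma \ref{lemma502}).

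Dividing each equation by its scalar leading factor and inverting the Toeplitz operators via Lemma \ref{lemma601}, the system is equivalent to
\begin{equation*}
({\bf r}^i,\tilde{\bf a}^i)\;=\;\bigl((T_r^i)^{-1}\mathcal G_r^i,\;(T_{\tilde a}^i)^{-1}\mathcal G_{\tilde a}^i\bigr),\qquad i=1,\ldots,k,
\end{equation*}
where each component of $\mathcal G_\bullet^i$ collects the source $\beta_{j,l}^i|_{r=\tilde a=0}$, the sub-leading off-diagonal Jacobian terms, the cross-center interactions, and the purely nonlinear remainder. Each of these carries an extra smallness factor: either $e^{-\xi L}$ (from Lemmas \ref{lemma401} and \ref{lemma403}) or $e^{-\sigma t_j^i}$ (from the off-diagonal Lipschitz estimates of Lemmas \ref{lemma504} and \ref{lemma506}), beyond the prefactor $e^{-2\tau}$ lost under $T^{-1}$. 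Choosing $\tau<\min\{\xi,\sigma\}$ as in the statement and setting $\rho=Ce^{-\xi L/4}$, I would verify on the ball $B_\rho=\{|(\tilde a_j^i)|_\tau+|(r_j^i)|_\tau\le\rho\}$ that the map is both a self-map (the $\ell_\tau$-norm of the source dominates the other pieces for $L$ large) and a contraction (the Lipschitz constants of $\mathcal G_\bullet^i$ on $B_\rho$ are $O(e^{-\xi L}+\rho)$, which combined with $\|T^{-1}\|_\tau\le Ce^{-2\tau}$ stays strictly below one). Banach's theorem then produces the unique fixed point, and the bound $|(\tilde a_j^i)|_\tau+|(r_j^i)|_\tau\le\rho$ is precisely the decay (3.9).

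The genuine difficulty is the isolation of the Toda structure inside the full infinite-dimensional Jacobian. Both $T_r^i$ and $T_{\tilde a}^i$ have the constant sequence $(1,1,\ldots)^t$ as a kernel on bounded sequences, so the whole argument relies on working in the \emph{weighted} space $\ell_\tau$ where $T^{-1}$ exists with a quantitative bound---and that bound has the specific form $Ce^{-2\tau}$ of Lemma \ref{lemma601}. Simultaneously, one must control the cross-center couplings among different $i$, the mixed $(r,\tilde a)$ derivatives at the same center, and the nonlinear remainder, verifying that all of them are genuinely lower-order in the $\ell_\tau$-norm. The fine book-keeping of the three small parameters $\tau$, $\sigma$, $\xi$ (with the ordering $\tau<\min\{\xi,\sigma\}$, so that $e^{-\sigma t_j^i}$ beats the weight $e^{(2j+1)\tau}$ built into $|\cdot|_\tau$) is what ultimately closes the contraction estimates uniformly in $L$ large.
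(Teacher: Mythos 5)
Your proposal follows essentially the same path as the paper's proof: identify the tridiagonal band of the Jacobian $\partial\beta_{j,l}^i/\partial\xi_{j',l'}^{i'}$ (via Lemma \ref{lemma502}) with the Toeplitz operators $T_r^i$ and $T_{\tilde a}^i$, invert them in the weighted space $\ell_\tau$ using Lemma \ref{lemma601}, treat the remaining terms (source at the trivial configuration, off-band Jacobian, cross-center couplings, nonlinear remainder) as a perturbation controlled by Lemmas \ref{lemma401}, \ref{lemma504}, and \ref{lemma506}, and close with a contraction mapping in the ball whose radius enforces the decay \eqref{para2}. The paper makes the splitting explicit by writing the perturbation as $G_{1,l,j}+G_{2,l,j}$ (the Jacobian difference term and the Toeplitz approximation error), but the content is the same as your description. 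One small correction: the estimate for the source term at $\tilde a_j^i=r_j^i=0$, namely $|\bar\beta_{j,l}^i|\lesssim e^{-\frac{(n-2\gamma)L}{2}(1+\xi)}e^{-\sigma t_j^i}$ (times $\lambda_j^i$ for $l\ge 1$), is Lemma \ref{lemma401}, not Lemma \ref{lemma403}; the latter gives the estimates for general parameters, which in the paper enters only implicitly through the $G$-terms.
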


\begin{proof}
For $l=0$, consider
\begin{equation*}
G_0^i=\left(\begin{array}{c}
\vdots\\
\frac{1}{F(L_i)}[\beta_{j,0}^i-\bar{\beta}_{j,0}^i]\\
\vdots
\end{array}
\right)-T_r^i({\bf r}^i)
\end{equation*}
and for $l=1,\cdots,n$,
\begin{equation*}
G_l^i=\left(\begin{array}{c}
\vdots\\
\frac{e^{\frac{n-2\gamma}{2}L_i}}{\lambda_j^i}[\beta_{j,l}^i-\bar{\beta}_{j,l}^i]\\
\vdots
\end{array}
\right)-T_{\tilde{a}}^i({\tilde{\bf a}}^i),
\end{equation*}
where $\bar{\beta}_{j,l}^i$ correspond to the numbers $\beta_{j,l}^i$ for the approximate solution $\bar{u}_i^0$, i.e., the solution when $\bar{a}_j^i$, $r_j^i$ are all zero.

One can easily see that $\beta_{j,l}^i=0$ for $j\geq 1$ if
\begin{equation}\label{eq612}
\tilde{\bf a}^i=-T_{\tilde{a}}^{-1}\Big\{\left(\begin{array}{c}
\vdots\\
\frac{e^{\frac{n-2\gamma}{2}L_i}}{\lambda_j^i}\bar{\beta}_{j,l}^i\\
\vdots
\end{array}
\right)+G_{l}^i\Big\}
\end{equation}
and
\begin{equation}\label{eq613}
{\bf r}^i=-T_r^{-1}\Big\{\left(\begin{array}{c}
\vdots\\
\frac{1}{F(L_i)}\bar{\beta}_{j,0}^i\\
\vdots
\end{array}
\right)+G_{0}^i\Big\}.
\end{equation}

Next we show that the terms on the right hand sides of (\ref{eq612})-(\ref{eq613}) are contractions in an appropriate sense. First, by Lemma \ref{lemma401}, one has
\begin{equation*}
|\bar{\beta}_{j,l}^i|\leq \left\{\begin{array}{l}
Ce^{-\frac{(n-2\gamma)L_i}{2}(1+\xi)}e^{-\sigma t_j^i} \ \mbox{ if } l=0,\\
C\lambda_j^ie^{-\frac{(n-2\gamma)L_i}{2}(1+\xi)}e^{-\sigma t_j^i}\  \mbox{ if } l\geq 1,
\end{array}
\right. \mbox{ for }j\geq 1.
\end{equation*}
We write the $j-$th component of $\big[\frac{e^{\frac{n-2\gamma}{2}L_i}}{\lambda_j^i}(\beta_{j,l}^i-\bar{\beta}_{j,l}^i)\big]
-T_{\tilde{a}}^i(\tilde{\bf a}^i)$ as $G_{1,l,j}+G_{2,l,j}$, where
\begin{equation*}
G_{1,l,j}=\int_0^1 \Big[\frac{e^{\frac{n-2\gamma}{2}L_i}}{\lambda_j^i}\frac{\partial \beta_{j,l}^i(t(\bar{\bf a}^i,{\bf r}^i))}{\partial t}-\bar{A}^i\Big](\bar{\bf a}^i,{\bf r}^i)\,dt
\end{equation*}
and
\begin{equation*}
G_{2,l,j}=\bar{A}^i-T_a^i(\tilde{\bf a}^i)
\end{equation*}
for
\begin{eqnarray*}
\bar{A}^i_j(\bar{\bf a}^i,{\bf r}^i)=\sum_{j'=0}^\infty \frac{e^{\frac{n-2\gamma}{2}L}}{\lambda_{j}^i}
{\frac{\partial \beta_{j,l}}{\partial \bar{a}_{j'}^i}\cdot[\bar{a}_{j'}^i]},
\end{eqnarray*}
where $\beta_{j,l}$ is given before Lemma \ref{lemma502} and $\bar{a}_j^i$ corresponds to the translation perturbation of the $j-$th bubble in the Delaunay solution, see Lemma \ref{lemma502}. Also observe that
\begin{equation*}
T_{\tilde{a}}(\bar{\bf a}^i)=T_{\tilde{a}}(\tilde{\bf a}^i).
\end{equation*}
Let us begin by estimating $G_{1,l,j}$: using Lemma \ref{lemma504} for $l=1,\ldots,n$, one finds
\begin{equation*}\begin{split}
|G_{1,l,j}|&\leq \sum_{j'=0}^\infty  \frac{e^{\frac{n-2\gamma}{2}L}}{\lambda_{j}^i}\Big|\frac{\partial \beta_{j,l}^i}{\partial \bar{a}_{j'}^i}-\frac{\partial \beta_{j,l}}{\partial \bar{a}_{j'}^i}\Big|\,|\bar{a}_{j'}^i|+O(e^{-\frac{(n-2\gamma)L_i}{2}\xi}e^{-\min\{\sigma, \tau\} t_j^i})\\
&\leq Ce^{-\frac{(n-2\gamma)L_i}{2}\xi}\sum_{j'}e^{-\sigma t_{j'}}e^{-\sigma |t_j-t_{j'}|}|\bar{a}_{j'}^i|
+O(e^{-\frac{(n-2\gamma)L_i}{2}\xi}
e^{-\min\{\sigma, \tau\} t_j^i})\\
&\leq C(e^{-\frac{(n-2\gamma)L}{2}\xi}
e^{-\min\{\sigma, \tau\} t_j^i}).
\end{split}\end{equation*}
To estimate $G_{2,l,j}$, we apply Lemma \ref{lemma502} which gives
\begin{eqnarray*}
|G_{2,l,j}|\leq Ce^{-\frac{(n-2\gamma)L_i}{2}\xi}\Big[e^{-\sigma L_i}\big(|\tilde{a}_{j-1}^i|+|\tilde{a}_{j+1}^i|\big)
+\sum_{j'\neq j\pm 1}e^{-\sigma|t_{j'}^i-t^i_j|}|\tilde{a}_{j'}^i|\Big].
\end{eqnarray*}
Combining the above two estimates, one has for $\tau<\sigma$,
\begin{equation*}
\|G_l^i\|_{\frac{\tau L_i}{2}}\leq Ce^{\tau L}e^{-\frac{(n-2\gamma)L_i}{2}\xi}\|\tilde{\bf a}^i\|_{\frac{\tau L_i}{2}}+O(e^{-\frac{(n-2\gamma)L_i}{2}\xi}), \quad \text{for } l=1,\ldots,n.
\end{equation*}
Similarly, for $l=0$, one can get that
\begin{equation*}
\|G_0^i\|_{\frac{\tau L_i}{2}}\leq Ce^{\tau L}e^{-\frac{(n-2\gamma)L_i}{2}\xi}\|{\bf r}^i\|_{\frac{\tau L_i}{2}}+O(e^{-\frac{(n-2\gamma)L_i}{2}\xi}).
\end{equation*}

Next, with some abuse of notation, equations (\ref{eq612}) and (\ref{eq613}) are equivalent to
\begin{equation*}
\tilde{\bf a}^i=T_a^{-1}[e^{\tau L}e^{-\frac{(n-2\gamma)L_i}{2}\xi}\|\tilde{\bf a}^i\|_{\tau L_i}
+O(e^{-\frac{(n-2\gamma)L_i}{2}\xi})]=:G_{\bf a}(\tilde{\bf a}^i)
\end{equation*}
and
\begin{equation*}
{\bf r}^i=T_r^{-1}[e^{\tau L}e^{-\frac{(n-2\gamma)L_i}{2}\xi}\|{\bf r}^i\|_{\tau L_i}
+O(e^{-\frac{(n-2\gamma)L_i}{2}\xi})]=:G_{\bf r}({\bf r}^i),
\end{equation*}
where the terms on the right hand sides of the above two equations are estimated in $\|\cdot\|_{\frac{\tau L_i}{2}}$ norm.
We now consider the set
\begin{equation}\label{eq614}
\mathcal{B}=\{(\tilde{{\bf a}}_j^i, {\bf r}_j^i) \,:\, \|\tilde{{\bf a}}^i\|_{\frac{\tau L_i}{2}}+\|{\bf r}^i\|_{\frac{\tau L_i}{2}}\leq Ce^{-\tau L}\}.
\end{equation}
For $\tau<\xi$ small enough, it follows that $(G_{\bf a}, G_{\bf r})$ maps $\mathcal{B}$ into itself for $L$ large. Furthermore, it is a contraction mapping.  So by fixed point theory, there exists a  fixed point in set $\mathcal{B}$. Thus we have found  $\tilde{a}_j^i, r_j^i$ such that $\beta_{j,l}^i=0$ for all $j\geq 1$, as desired.

\end{proof}

We are now in the position to prove our existence result.\\

\noindent{\bf Proof of Theorem \ref{main-theorem}.} By Proposition \ref{prop:beta=0}, we are reduced to find $R^i, \hat{a}_0^i$ and $q_i$ for which $\beta_{0,l}^i=0$.

For $j=0$, from Lemma \ref{lemma403}, one has that equation $\beta_{0,0}^i=0$ is reduced to
\begin{equation*}\begin{split}
\beta_{0,0}^i&=-{c_{n,\gamma}}q_i\Big[A_2\sum_{i'\neq i}|p_{i'}-p_i|^{-(n-2\gamma)}(R_0^iR_0^{i'})^{\frac{n-2\gamma}{2}}q_{i'}-\Big(\frac{R_1^i}{R_0^i}\Big)^{\frac{n-2\gamma}{2}}q_i\Big] e^{-\frac{(n-2\gamma)L}{2}}(1+o(1))\\
&+O(e^{-\frac{(n-2\gamma)L}{2}(1+\xi)})=0.
\end{split}\end{equation*}
Recall that by the definition of $R_j^i$, i,e., $R_0^i=R^i(1+r_0^i)$, and the estimate for $r_j^i$ (\ref{eq614}),  then the above equation can be rewritten as
\begin{equation}\label{eq605}
A_2\sum_{i'\neq i}|p_{i'}-p_i|^{-(n-2\gamma)}(R^iR^{i'})^{\frac{n-2\gamma}{2}}q_{i'}-q_i=o(1).
\end{equation}
On the other hand, the equations $\beta_{0,l}^i=0$ for $l=1,\cdots,n$ are reduced to
\begin{equation*}\begin{split}
\beta_{0,l}^i&=c_{n,\gamma}\Big[A_3\sum_{i'\neq i}\frac{(p_{i'}-p_i)_l}{|p_{i'}-p_i|^{n-2\gamma+2}}(R_0^iR_0^{i'})^{\frac{n-2\gamma}{2}}q_{i'}\\
&+A_0\Big(\frac{R_1^i}{R_0^i}\Big)^{\frac{n-2\gamma}{2}}\frac{a_0^i-a_1^i}{(\lambda_0^i)^2} q_i \Big]q_ie^{-\frac{n-2\gamma}{2}L}+{O(e^{-\frac{(n-2\gamma)L}{2}(1+\xi) })}=0.
\end{split}\end{equation*}
By the definition of $a_j^i$, i.e. $a_j^i=(\lambda_j^i)^2\bar{a}_j^i$ and $\bar{a}_j^i=\hat{a}_0^i+\tilde{a}_j^i$, and the estimates satisfied by $\tilde{a}_j^i$ \eqref{eq614}, the above equation can be rewritten as
\begin{eqnarray}\label{eq606}
A_3\sum_{i'\neq i}\frac{(p_{i'}-p_i)_l}{|p_{i'}-p_i|^{n-2\gamma+2}}(R^iR^{i'})^{\frac{n-2\gamma}{2}}q_{i'}
+A_0\hat{a}_0^iq_i=o(1).
\end{eqnarray}

Our last step is to choose suitable $\hat{a}_0^i, R^i, q_i $ such that  equations (\ref{eq605}) and (\ref{eq606}) are solvable. Recalling the balancing conditions \eqref{balance1}-\eqref{balance2} satisfied by $\hat{a}_0^{i,b}, R^{i,b}, q_i^b$, the solvability of \eqref{eq605}-\eqref{eq606} depends on the following invertibility property of the linearized operator of the above equations around $\hat{a}_0^{i,b}, R^{i,b}, q_i^b$:
\begin{lemma}\label{lemma603}
If we denote by
\begin{equation*}
\mathcal{F}(R^i, q_i)=A_2\sum_{i'\neq i}|p_{i'}-p_i|^{-(n-2\gamma)}(R^iR^{i'})^{\frac{n-2\gamma}{2}}q_{i'}-q_i,
\end{equation*}
then the linearized operator of $\mathcal{F}$ around $( R^{i,b}, q_i^b)$ is invertible.
\end{lemma}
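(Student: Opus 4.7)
The plan is to show that the full $k\times 2k$ Jacobian of $\mathcal{F}$ at $(R^{i,b},q_i^b)$ has rank $k$, i.e.\ is surjective onto $\R^k$. Combined with an implicit function / contraction argument (of the same flavor as the one used in Proposition \ref{prop:beta=0}), this is precisely what is needed to solve the perturbed system \eqref{eq605} near the balancing configuration. In particular, the ``invertibility'' claimed in the statement is to be interpreted as right-invertibility of the linearized map.

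I would first introduce the symmetric nonnegative matrix $N$ with entries $N_{ij}=A_2|p_i-p_j|^{-(n-2\gamma)}(R^{i,b}R^{j,b})^{(n-2\gamma)/2}$ for $i\neq j$ and $N_{ii}=0$; the balancing condition \eqref{balance1} then reads exactly $Nq^b=q^b$. Since all off-diagonal entries of $N$ are strictly positive and $k\geq 2$, the matrix is irreducible, so the Perron--Frobenius theorem guarantees that $1$ is a simple eigenvalue of $N$ with eigenvector $q^b$. A direct differentiation, using the identity $Nq^b=q^b$ to simplify the diagonal contribution, gives the two $k\times k$ blocks of the linearization as
\begin{equation*}
\left.\frac{\partial \mathcal{F}_i}{\partial R^j}\right|_b=\frac{n-2\gamma}{2R^{j,b}}\bigl[(N+I)D_{q^b}\bigr]_{ij},\qquad
\left.\frac{\partial \mathcal{F}_i}{\partial q_j}\right|_b=(N-I)_{ij},
\end{equation*}
with $D_{q^b}=\mathrm{diag}(q_i^b)$. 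Since the diagonal scalings $D_{q^b}$ and $D_{R^b}^{-1}$ are invertible, the image of the full Jacobian coincides with $\mathrm{Im}(N+I)+\mathrm{Im}(N-I)$.

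The key step is then to show that this sum is all of $\R^k$. Using the symmetry of $N$, this is equivalent to $\ker(N+I)\cap\ker(N-I)=\{0\}$, which is immediate: any vector in both kernels would simultaneously be a $(-1)$- and a $(+1)$-eigenvector of $N$, hence zero. This gives surjectivity and establishes the lemma.

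The main obstacle is that either block on its own may fail to be invertible: already for $k=2$ the spectrum of $N$ is $\{1,-1\}$, so $N+I$ is singular and an argument using only $R$-variations would break down; similarly $N-I$ always has $q^b$ in its kernel, so $q$-variations alone are insufficient. The resolution is to use both families of parameters jointly and to exploit the symmetry of $N$, which forces the orthogonal complements of $\mathrm{Im}(N+I)$ and $\mathrm{Im}(N-I)$ to intersect trivially and hence span $\R^k$ in sum.
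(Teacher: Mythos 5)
Your proof is correct, and it takes a genuinely different (and in one respect cleaner) route than the paper. The paper's argument is: $\mathcal{F}_q=N-I$ is symmetric with \emph{exactly} one-dimensional kernel spanned by $q^b$ (a fact that requires Perron--Frobenius, and is simply asserted there), and $\mathcal{F}_R(R^b)$ is a nonzero multiple of $q^b$, so the $R$-variations fill in the single missing direction in $\mathrm{Im}(N-I)^\perp=\mathrm{span}\{q^b\}$. Your argument instead packages \emph{both} blocks symmetrically: after using the balancing identity $Nq^b=q^b$ to rewrite the diagonal of the $R$-block, the Jacobian images are exactly $\mathrm{Im}\big((N+I)D_{q^b}D_{R^b}^{-1}\big)=\mathrm{Im}(N+I)$ and $\mathrm{Im}(N-I)$, and the equality $\mathrm{Im}(N+I)+\mathrm{Im}(N-I)=\R^k$ is then a one-line consequence of the symmetry of $N$ (a vector in $\ker(N+I)\cap\ker(N-I)$ would be both a $+1$- and a $-1$-eigenvector, hence zero). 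What this buys you is robustness: you never need to know that $1$ is a simple eigenvalue of $N$, so the Perron--Frobenius invocation in your second paragraph is actually superfluous and could be dropped without loss --- whereas in the paper's version the one-dimensionality of $\ker(\mathcal{F}_q)$ is essential and left unjustified. Your reading of ``invertible'' as surjectivity (right-invertibility) of the $k\times 2k$ Jacobian is the intended one and agrees with the paper's conclusion that $\mathcal{F}_{R^{i,b},q_i^b}$ is surjective.
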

\begin{proof}
From the definition of $\mathcal{F}$, one has the following expression for the linearized operator
\begin{equation*}
\mathcal{F}_{R^{i}, q_i}|_{(R^{i,b},q_i^b)}: \R^{2k} \to \R^k,
\end{equation*}
where
\begin{equation*}
\mathcal{F}_{q_i}=({\bf q}_{ij})
\end{equation*}
for
\begin{equation*}
{\bf q}_{ii}=-1, \quad {\bf q}_{ij}=A_2|p_i-p_j|^{-(n-2\gamma)}(R^{i,b}R^{j,b})^{\frac{n-2\gamma}{2}}, \, i\neq j,
\end{equation*}
and
\begin{equation*}
\mathcal{F}_{R^i}=({\bf R}_{ij})
\end{equation*}
for
\begin{equation*}\begin{split}
&{\bf R}_{ii}=\frac{n-2\gamma}{2}\frac{1}{R^{i,b}}\sum_{i'\neq i}A_2|p_{i'}-p_i|^{-(n-2\gamma)}(R^{i,b}R^{i',b})^{\frac{n-2\gamma}{2}}{q_i^b},\\
&{\bf R}_{ij}=\frac{n-2\gamma}{2}\frac{1}{R^{j,b}}A_2|p_{j}-p_i|^{-(n-2\gamma)}
(R^{i,b}R^{j,b})^{\frac{n-2\gamma}{2}}{q_j^b},\, i\neq j.
\end{split}\end{equation*}
From the balancing condition \eqref{balance1} we know that
\begin{equation*}
\mathcal{F}(R^{i,b},q_i^b)=0.
\end{equation*}
One can easily see that the matrix $\mathcal{F}_{q_i}$ is symmetric and has only one-dimensional kernel, which is given by
\begin{equation*}
\operatorname{Ker}(\mathcal{F}_{q_i})=Span\{(q^b_1,\cdots, q^b_k)\}.
\end{equation*}
The balancing condition \eqref{balance1} also implies that
\begin{equation*}
\mathcal{F}_{R^i}\left(\begin{array}{c}
R^{1,b}\\
\vdots\\
R^{k,b}
\end{array}
\right)
=\frac{(n-2\gamma)}{{2}}\left(\begin{array}{c}
q^b_1\\
\vdots\\
q^b_k
\end{array}
\right).
\end{equation*}
Thus we conclude that the operator $\mathcal{F}_{R^{i,b}, q^b_i}$ is surjective.

\end{proof}

From Lemma \ref{lemma603} and the balancing condition \eqref{balance1}, one can easily find $(R^i, q_i)$ which solves (\ref{eq605}) by perturbing near $(R^{i,b}, q_i^b)$. Looking at the second balancing condition \eqref{balance2}, once $(R^i, q_i)$ are known, one can find $\hat{a}_0^i$ around $\hat{a}_0^{i,b}$ which solves \eqref{eq606}.

In conclusion, we have chosen $R^i, q_i, \hat{a}_0^i$ such that (\ref{eq605})-(\ref{eq606}) are solved, i.e. $\beta_{0,l}^i=0$. The last step in our argument is to use the maximum principle in \cite{dpgw} to show that $u>0$. This concludes the proof of the main Theorem.

\qed


\section{Appendix}

In this appendix we will derive some useful integrals which are important in our proof. All of the following expressions may be found in A. Bahri's book \cite{b} for the special case $\gamma=1$. Below we derive the estimates for general $\gamma$.

We define
$$w_1=\Big(\frac{\lambda_1}{\lambda_1^2+|x|^2}\Big)^{\frac{n-2\gamma}{2}},\quad w_2=\Big(\frac{\lambda_2}{\lambda_2^2+|x|^2}\Big)^{\frac{n-2\gamma}{2}},\quad w_3=\Big(\frac{\lambda_3}{\lambda_3^2+|x-p|^2}\Big)^{\frac{n-2\gamma}{2}}.$$

\begin{lemma} It holds
\begin{equation}\label{eqa01}
\beta \int_{\R^n}w_1^{\beta-1}w_2\frac{\partial w_1}{\partial \lambda_1}\,dx=\frac{1}{\lambda_1}F\big(\big|\log \tfrac{\lambda_2}{\lambda_1}\big|\big)\frac{\log \frac{\lambda_2}{\lambda_1}}{|\log\frac{\lambda_2}{\lambda_1}|},
\end{equation}
where
\begin{equation*}
F(\ell):=
\beta\int_{\R}v(t)^{\beta-1}v(t+\ell)v'(t)\,dt=e^{-\frac{n-2\gamma}{2}\ell}(1+o(1)), \quad \ell\to\infty.
\end{equation*}
\end{lemma}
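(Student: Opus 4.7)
The plan is to reduce the $n$-dimensional integral to a one-dimensional one by passing to Emden-Fowler coordinates, identify the result with $F$ (up to a sign), and then extract the claimed asymptotics of $F$.

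\textbf{Step 1: Rewrite the bubbles via $v$.} Set $t = -\log|x|$ and $s_i = -\log\lambda_i$. A direct computation using
\[
\lambda_i^2 + |x|^2 = e^{-(t+s_i)}\bigl(e^{t-s_i}+e^{s_i-t}\bigr) = 2\,e^{-(t+s_i)}\cosh(t-s_i)
\]
gives
\[
w_i(x) = 2^{-\frac{n-2\gamma}{2}}|x|^{-\frac{n-2\gamma}{2}}\,v(t-s_i), \qquad v(\tau)=(\cosh\tau)^{-\frac{n-2\gamma}{2}}.
\]
Since $\partial_{\lambda_1} = -\lambda_1^{-1}\partial_{s_1}$ and $v$ depends on $t-s_1$, I obtain
\[
\frac{\partial w_1}{\partial \lambda_1} = \frac{1}{\lambda_1}\,2^{-\frac{n-2\gamma}{2}}|x|^{-\frac{n-2\gamma}{2}}\,v'(t-s_1).
\]

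\textbf{Step 2: Reduce to a 1D integral.} Multiplying out and using $\beta\cdot\frac{n-2\gamma}{2} = \frac{n+2\gamma}{2}$, the integrand becomes
\[
w_1^{\beta-1}w_2\,\frac{\partial w_1}{\partial\lambda_1} = \frac{1}{\lambda_1}\,2^{-n}\,|x|^{-n}\,v(t-s_1)^{\beta-1}\,v(t-s_2)\,v'(t-s_1).
\]
Writing $dx$ in polar form and using $|x|^{-n}\,r^{n-1}\,dr\,d\omega = dt\,d\omega$, the angular integration contributes a constant and I am left with
\[
\beta\!\int_{\R^n}\!w_1^{\beta-1}w_2\frac{\partial w_1}{\partial \lambda_1}\,dx
 = \frac{C_n}{\lambda_1}\,\beta\!\int_\R v(u)^{\beta-1}v(u+\ell)v'(u)\,du = \frac{C_n}{\lambda_1}\,F(\ell),
\]
after the substitution $u=t-s_1$, where $\ell = s_1-s_2 = \log(\lambda_2/\lambda_1)$ and $C_n = |S^{n-1}|/2^n$ (absorbed into the normalization of $F$ used in the paper).

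\textbf{Step 3: Sign structure.} Using that $v$ is even and $v'$ is odd, the substitution $u\mapsto -u$ yields $F(-\ell) = -F(\ell)$. Hence $F(\ell) = \frac{\ell}{|\ell|}F(|\ell|)$, which reproduces the factor $\frac{\log(\lambda_2/\lambda_1)}{|\log(\lambda_2/\lambda_1)|}$ in the statement.

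\textbf{Step 4: Asymptotics of $F(\ell)$ as $\ell\to+\infty$.} I would integrate by parts once, $\beta v^{\beta-1}v' = (v^\beta)'$, giving
\[
F(\ell) = -\int_\R v(u)^\beta\,v'(u+\ell)\,du.
\]
Since $v(u+\ell)\sim 2^{\frac{n-2\gamma}{2}}e^{-\frac{n-2\gamma}{2}(u+\ell)}$ and likewise $-v'(u+\ell)\sim \frac{n-2\gamma}{2}\,2^{\frac{n-2\gamma}{2}}e^{-\frac{n-2\gamma}{2}(u+\ell)}$ as $u+\ell\to+\infty$, while $v^\beta$ is integrable with exponential decay at $\pm\infty$, dominated convergence applied to $e^{\frac{n-2\gamma}{2}\ell}F(\ell)$ shows this quantity converges to a positive constant, yielding $F(\ell)=e^{-\frac{n-2\gamma}{2}\ell}(1+o(1))$.

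\textbf{Main obstacle.} The computation itself is essentially routine once the Emden–Fowler substitution is made; the only delicate point is bookkeeping of the numerical constants ($2^{-n}$, $|S^{n-1}|$, etc.), which are absorbed into the normalization of $F$ consistent with the rest of the appendix.
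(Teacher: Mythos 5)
Your proof is correct and uses the same Emden--Fowler reduction as the paper: both write $w_i=|x|^{-\frac{n-2\gamma}{2}}v(-\log|x|+\log\lambda_i)$, observe that the $|x|$-powers collapse to $|x|^{-n}$ so that the radial integral becomes a one-dimensional integral in $t=-\log|x|$, and then shift by $\log\lambda_1$ to expose $F(\log\tfrac{\lambda_2}{\lambda_1})$. The paper's proof consists only of your Steps 1--2 (and, like you, silently drops the $|S^{n-1}|2^{-n}$ constant); your Steps 3 and 4 additionally verify the odd symmetry of $F$ that justifies the $\tfrac{\ell}{|\ell|}$ factor, and the asymptotics $F(\ell)\sim e^{-\frac{n-2\gamma}{2}\ell}$ (up to a multiplicative constant), neither of which the paper actually checks.
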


\begin{proof}
By the relation between $w$ and $v$, one has
\begin{equation*}
w_1=|x|^{-\frac{n-2\gamma}{2}}v(-\log|x|+\log\lambda_1), \ w_2=|x|^{-\frac{n-2\gamma}{2}}v(-\log|x|+\log\lambda_2).
\end{equation*}
Thus
\begin{equation*}\begin{split}
\beta\int_{\R^n}w_1^{\beta-1}w_2\frac{\partial w_1}{\partial \lambda_1}\,dx&=\beta\int_{\R^n}|x|^{-2\gamma}v_1^{\beta-1}|x|^{-\frac{n-2\gamma}{2}}v_1'\frac{1}{\lambda_1}|x|^{-\frac{n-2\gamma}{2}}v_2\,dx\\
&=\beta\frac{1}{\lambda_1}\int_{\R}v^{\beta-1}(t+\log\lambda_1)v'(t+\log\lambda_1)v(t+\log\lambda_2)\,dt\\
&=\beta\frac{1}{\lambda_1}\int_{\R}v^{\beta-1}(t)v'(t)v(t+\log\tfrac{\lambda_2}{\lambda_1})\,dt\\
&=\frac{1}{\lambda_1}F(|\log\frac{\lambda_2}{\lambda_1}|)\frac{\log\frac{\lambda_2}{\lambda_1}}{|\log\frac{\lambda_2}{\lambda_1}|}.
\end{split}\end{equation*}
\end{proof}

\begin{lemma}
If  $\lambda_3=O(\lambda_1)$ then the following estimates hold:
\begin{eqnarray}
\label{eqa02}
&&\beta\int_{\R^n}w_1^{\beta-1}w_3\frac{\partial w_1}{\partial \lambda_1}\,dx=A_2\frac{|p|^{-(n-2\gamma)}}{\lambda_1}(\lambda_1\lambda_3)^{\frac{n-2\gamma}{2}}[1+O(\lambda_1)^2],\\
\label{eqa03}
&&\beta \int_{\R^n}w_1^{\beta-1}w_3\frac{\partial w_1}{\partial x_l}\,dx=A_3\frac{p_l}{|p|^{n-2\gamma+2}}(\lambda_1\lambda_3)^{\frac{n-2\gamma}{2}}(1+O(\lambda_1^2)),
\quad l=1,\ldots,n,
\end{eqnarray}
and the constants are given by
\begin{equation*}\begin{split}
&A_2=\frac{n+2\gamma}{2}\int_{\R^n}\frac{|x|^2-1}{(1+|x|^2)^{\frac{n+2\gamma+2}{2}}}\,dx>0,\\
&A_3=-\frac{(n-2\gamma)^2}{n}\int_{\R^n}\frac{|x|^2}{(1+|x|^2)^{\frac{n+2\gamma+2}{2}}}\,dx<0.
\end{split}\end{equation*}
\end{lemma}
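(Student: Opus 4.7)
The strategy is based on the fact that $w_1^{\beta-1}\partial w_1$ is concentrated at the origin on scale $\lambda_1$, while $w_3$ (whose singularity is at $p$, bounded away from $0$) is smooth and slowly varying near the origin. Concretely, I would change variables $x=\lambda_1 y$. Using $w_1(\lambda_1 y)=\lambda_1^{-(n-2\gamma)/2}(1+|y|^2)^{-(n-2\gamma)/2}$ and the identity $(\beta-1)(n-2\gamma)/2=2\gamma$, explicit differentiation yields
\begin{equation*}
w_1^{\beta-1}(\lambda_1 y)\,\partial_{\lambda_1}w_1(\lambda_1 y)=\tfrac{n-2\gamma}{2}\,\lambda_1^{-\frac{n+2\gamma}{2}-1}\,\frac{|y|^2-1}{(1+|y|^2)^{\frac{n+2\gamma+2}{2}}},
\end{equation*}
\begin{equation*}
w_1^{\beta-1}(\lambda_1 y)\,\partial_{x_l}w_1(\lambda_1 y)=-(n-2\gamma)\,\lambda_1^{-\frac{n+2\gamma}{2}-1}\,\frac{y_l}{(1+|y|^2)^{\frac{n+2\gamma+2}{2}}}.
\end{equation*}
Together with the Jacobian $\lambda_1^n$, each rescaled integrand carries the factor $\lambda_1^{(n-2\gamma)/2-1}$, which combines with the $\lambda_3^{(n-2\gamma)/2}$ from $w_3$ to produce the claimed $(\lambda_1\lambda_3)^{(n-2\gamma)/2}/\lambda_1$.

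The next step is to Taylor expand $w_3(\lambda_1 y)$ around $y=0$. Since $\lambda_3=O(\lambda_1)$ and $|p|$ is bounded below,
\begin{equation*}
w_3(\lambda_1 y)=\lambda_3^{\frac{n-2\gamma}{2}}\left[|p|^{-(n-2\gamma)}+(n-2\gamma)\frac{\lambda_1\,y\cdot p}{|p|^{n-2\gamma+2}}+O\big((\lambda_1|y|)^2+\lambda_3^2\big)\right],
\end{equation*}
valid in any region where $|\lambda_1 y|\leq |p|/2$. For (\ref{eqa02}), plug in the constant term; the $y$-integral is $\int(|y|^2-1)/(1+|y|^2)^{(n+2\gamma+2)/2}\,dy$, and multiplying by $\beta(n-2\gamma)/2=(n+2\gamma)/2$ gives $A_2$. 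Its positivity follows from writing $|y|^2-1=(1+|y|^2)-2$ and using the beta-function evaluations $\int(1+|y|^2)^{-s}\,dy=\omega_{n-1}\Gamma(n/2)\Gamma(s-n/2)/(2\Gamma(s))$, which reduce the integral to a positive multiple $\frac{n-2\gamma}{2\gamma}\int(1+|y|^2)^{-(n+2\gamma+2)/2}\,dy$. For (\ref{eqa03}), the constant term gives $0$ by oddness, so the linear term dominates; using $\int y_l y_m(1+|y|^2)^{-(n+2\gamma+2)/2}\,dy=(\delta_{lm}/n)\int |y|^2(1+|y|^2)^{-(n+2\gamma+2)/2}\,dy$, one obtains the claimed factor $p_l/|p|^{n-2\gamma+2}$ and an explicit negative constant $A_3$ (sign immediate from the positive $|y|^2$ moment).

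The key technical step is the uniform error control, which I expect to be the main obstacle. The rescaled integrand for the $\partial_{\lambda_1}$ case decays only like $|y|^{-n-2\gamma}$, and a naive Taylor remainder like $O(\lambda_1^2|y|^2)$ produces a divergent moment at infinity once $\gamma<1$. To handle this I would split into a near region $\{|y|\leq \lambda_1^{-\sigma}\}$ and a far region $\{|y|\geq \lambda_1^{-\sigma}\}$ for a suitable $\sigma\in(0,1)$. In the near region, enough terms in the Taylor expansion of $w_3$ are extracted so that the remaining odd-symmetry arguments apply and the residual moments converge. In the far region, one replaces the Taylor expansion by direct pointwise bounds $|w_1^{\beta-1}\partial w_1|(x)\lesssim\lambda_1^{(n+2\gamma)/2-1}|x|^{-(n+2\gamma)}$ (valid for $|x|\gg\lambda_1$) combined with $w_3(x)\lesssim\lambda_3^{(n-2\gamma)/2}|x-p|^{-(n-2\gamma)}$, and an explicit integration of this product over the far region shows the contribution is of the desired remainder order once $\sigma$ is tuned. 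Balancing the near-expansion error against this far-region tail, while respecting the slow decay of the rescaled integrand, is the delicate point and where the argument requires the most care.
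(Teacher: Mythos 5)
Your computation of the leading term — rescaling $x=\lambda_1 y$, explicit differentiation of $w_1$, and Taylor expanding the slowly varying factor $w_3(\lambda_1 y)$ around $y=0$, with the constant term giving \eqref{eqa02} and the linear (odd) term giving \eqref{eqa03} — is exactly the paper's argument. Your positivity proof for $A_2$ via $|y|^2-1=(1+|y|^2)-2$ and beta-function evaluations is fine; the paper instead notices that $\frac{n+2\gamma}{2}\int(|x|^2-1)(1+|x|^2)^{-\frac{n+2\gamma}{2}-1}dx=\partial_{\lambda_1}|_{\lambda_1=1}\int w_1^\beta\,dx=\frac{n-2\gamma}{2}\int(1+|x|^2)^{-\frac{n+2\gamma}{2}}dx$, which reaches the same conclusion slightly more cleanly. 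Both are perfectly valid.

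Your concern about the error term is genuine, and in fact you are being \emph{more} careful than the paper here. The rescaled weight $\frac{|y|^2-1}{(1+|y|^2)^{\frac{n+2\gamma}{2}+1}}$ decays only like $|y|^{-(n+2\gamma)}$, so the naive quadratic remainder $O(\lambda_1^2|y|^2)$ gives $\int_1^\infty r^{1-2\gamma}\,dr$, which diverges for all $\gamma\in(0,1)$. The near/far split you propose is the correct fix: cutting at $|y|\sim|p|/\lambda_1$, the Taylor remainder over the inner region and the direct pointwise bound over the outer region each contribute a relative error of order $(\lambda_1/|p|)^{2\gamma}$ (not $O(\lambda_1^2)$, which the paper asserts without justification). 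This is weaker than what the paper writes but is still $o(1)$ and still of the form $e^{-\xi L}$ with $\xi=\frac{2\gamma}{n-2\gamma}>0$, which is all that is needed in Lemma~\ref{lemma401} and its descendants. The only thing I would add: in the far region one also needs to account for the neighborhood of $y\approx p/\lambda_1$ where $w_3$ is large, but since the weight there is already $\sim(\lambda_1/|p|)^{n+2\gamma}$ and the singular integral of $w_3$ over $|z|<|p|/2$ contributes only $O(|p|^{2\gamma})$, this piece is also of order $\lambda_1^{2\gamma}/|p|^n$ and does not change the estimate. So your proposal is correct, supplies a step the paper silently omits, and the stated error $O(\lambda_1^2)$ in the paper should really read $O(\lambda_1^{2\gamma})$ for $\gamma<1$, which is harmless for the main theorem.
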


\begin{proof}
We calculate
\begin{equation*}\begin{split}
\beta\int_{\R^n}w^{\beta-1}_1w_3\frac{\partial w_1}{\partial \lambda_1}\,dx&=\frac{n+2\gamma}{2}
\int_{\R^n}\frac{\lambda_1^{\frac{n+2\gamma}{2}-1}(|x|^2-\lambda_1^2)}
{(|x|^2+\lambda_1^2)^{\frac{n+2\gamma}{2}+1}}\Big(\frac{\lambda_3}{|x-p|^2+\lambda_3^2}\Big)
^{\frac{n-2\gamma}{2}}\,dx\\
&=\frac{n+2\gamma}{2}\lambda_1^{\frac{n-2\gamma}{2}-1}\lambda_3^{\frac{n-2\gamma}{2}}
\int_{\R^n}\frac{|x|^2-1}{(1+|x|^2)^{\frac{n+2\gamma}{2}+1}}\frac{1}{(|\lambda_1x-p|^2
+\lambda_3^2)^{\frac{n-2\gamma}{2}}}\,dx\\
&=\frac{n+2\gamma}{2}\lambda_1^{\frac{n-2\gamma}{2}-1}\lambda_3^{\frac{n-2\gamma}{2}}
|p|^{-(n-2\gamma)}\int_{\R^n}\frac{|x|^2-1}{(1+|x|^2)^{\frac{n+2\gamma}{2}+1}}\,dx\,
(1+O(\lambda_1^2)),
\end{split}\end{equation*}
where we have used the expansion
\begin{equation}\label{Taylor}
(\lambda_3^2+|\lambda_1x-p|^2)^{-\frac{n-2\gamma}{2}}=|p|^{-(n-2\gamma)}+(n-2\gamma)\frac{\lambda_1 p\cdot x}{|p|^{n-2\gamma+2}}+O(\lambda_1^2).
\end{equation}
Moreover, rescaling  $\lambda_1$ in the second step,
\begin{equation*}\begin{split}
\frac{n+2\gamma}{2}\int_{\R^n}\frac{|x|^2-1}{(1+|x|^2)^{\frac{n+2\gamma}{2}+1}}\,dx&
=\frac{\partial }{\partial \lambda_1}\Big|_{\lambda_1=1}\int_{\R^n }w_1^\beta \,dx\\
&=\frac{\partial }{\partial \lambda_1}\Big|_{\lambda_1=1}\int_{\R^n}\frac{\lambda_1^{\frac{n-2\gamma}{2}}}{(1+|x|^2)^{\frac{n+2\gamma}{2}}}\,dx\\
&=\frac{n-2\gamma}{2}\int_{\R^n}\frac{1}{(1+|x|^2)^{\frac{n+2\gamma}{2}}}\,dx>0.
\end{split}\end{equation*}
Next, by \eqref{Taylor} again,

\begin{equation*}\begin{split}
\int_{\R^n}\beta w_1^{\beta-1}w_3\frac{\partial w_1}{\partial x_l}\,dx&
=-(n-2\gamma)\int_{\R^n}\frac{\lambda_1^{\frac{n+2\gamma}{2}}x}{(\lambda_1^2+|x|^2)
^{\frac{n+2\gamma}{2}+1}}\Big(\frac{\lambda_3}{\lambda_3^2+|x-p|^2}\Big)^{\frac{n-2\gamma}{2}}\,dx\\
&=-\frac{(n-2\gamma)^2}{n}(\lambda_1\lambda_3)^{\frac{n-2\gamma}{2}}\frac{p_l}{|p|^{n-2\gamma+2}}
\int_{\R^n}\frac{|x|^2}{(1+|x|^2)^{\frac{n+2\gamma}{2}+1}}\,dx\,(1+O(\lambda_1^2)).
\end{split}\end{equation*}
\end{proof}

\begin{lemma}
For $|a|\leq \max\{\lambda_1^2,\lambda_2^2\}<<1$ and $\min\{\frac{\lambda_1}{\lambda_2}, \frac{\lambda_2}{\lambda_1}\}<<1$, the following estimates hold:
\begin{equation}\label{eqa04}\begin{split}
\int_{\R^n}&\frac{\partial }{\partial a}\Big(\frac{\lambda_1}{\lambda_1^2+|x-a|^2}\Big)^{\frac{n+2\gamma}{2}}
\Big(\frac{\lambda_2}{\lambda_2^2+|x|^2}\Big)^{\frac{n-2\gamma}{2}}dx\\
&=-A_0\min\Big\{\Big(\frac{\lambda_1}{\lambda_2}\Big)^{\frac{n-2\gamma}{2}}, \Big(\frac{\lambda_2}{\lambda_1}\Big)^{\frac{n-2\gamma}{2}}\Big\}\frac{a}{\max\{\lambda_1^2,\lambda_2^2\}}\\
&+O\Big(\Big(\frac{a}{\max\{\lambda_1,\lambda_2\}}\Big)^2
+\min\Big\{\Big(\frac{\lambda_1}{\lambda_2}\Big)^{{\frac{n-2\gamma}{2}}}, \Big(\frac{\lambda_2}{\lambda_1}\Big)^{\frac{n-2\gamma}{2}}\Big\}
\frac{a}{\max\{\lambda_1,\lambda_2\}}\Big)\\
&\cdot\min\Big\{\Big(\frac{\lambda_1}{\lambda_2}\Big)^{\frac{n-2\gamma}{2}}, \Big(\frac{\lambda_2}{\lambda_1}\Big)^{\frac{n-2\gamma}{2}}\Big\},
\end{split}\end{equation}
where
\begin{equation*}
A_0=\frac{(n+2\gamma)(n-2\gamma)}{n}\int_{\R^n}\frac{1}{|x|^{n-2\gamma}(1+|x|^2)^{\frac{n+2\gamma}{2}+1}}\,dx>0.
\end{equation*}
\end{lemma}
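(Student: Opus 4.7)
The plan is to integrate by parts to shift the $a$-derivative onto $w_2$, then Taylor expand in $a$ around $a=0$ and extract the leading coefficient using rotational symmetry.

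Set $f(a)=\int_{\R^n} w_1^\beta(x-a)w_2(x)\,dx$, where for brevity $w_j(x)=\big(\lambda_j/(\lambda_j^2+|x|^2)\big)^{(n-2\gamma)/2}$. Substituting $y=x-a$,
\begin{equation*}
\int_{\R^n}\frac{\partial}{\partial a}\Big(\frac{\lambda_1}{\lambda_1^2+|x-a|^2}\Big)^{\frac{n+2\gamma}{2}}\Big(\frac{\lambda_2}{\lambda_2^2+|x|^2}\Big)^{\frac{n-2\gamma}{2}}dx=\nabla_a f(a)=\int_{\R^n} w_1^\beta(y)\,\nabla w_2(y+a)\,dy.
\end{equation*}
At $a=0$ this vanishes by radiality of $w_1^\beta$ and oddness of $\nabla w_2$, so Taylor expansion of $\nabla w_2(y+a)$ around $y$ gives
\begin{equation*}
\nabla_a f(a)=\Big(\int_{\R^n}w_1^\beta(y)\,D^2w_2(y)\,dy\Big)\cdot a+O\Big(|a|^2\int_{\R^n}w_1^\beta(y)\,|D^3w_2(y+\xi)|\,dy\Big).
\end{equation*}

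Integration by parts yields $\int w_1^\beta\,\partial^2_{ij}w_2\,dy=-\int\partial_j w_1^\beta\,\partial_i w_2\,dy$, and rotational symmetry of $w_1^\beta$ and $w_2$ forces this matrix to be a scalar multiple of the identity, namely $-\tfrac{\delta_{ij}}{n}\int\nabla w_1^\beta\cdot\nabla w_2\,dy$. Without loss of generality assume $\lambda_1\leq\lambda_2$; the other case simply swaps the roles of $\lambda_1$ and $\lambda_2$, which is the origin of the $\min/\max$ structure in the final formula. Substituting the explicit gradients and rescaling $x=\lambda_1 y$,
\begin{equation*}
\int_{\R^n}\nabla w_1^\beta\cdot\nabla w_2\,dx=\frac{(n+2\gamma)(n-2\gamma)}{\lambda_2^2}\Big(\frac{\lambda_1}{\lambda_2}\Big)^{\frac{n-2\gamma}{2}}\int_{\R^n}\frac{|y|^2\,dy}{(1+|y|^2)^{\frac{n+2\gamma}{2}+1}\big(1+\tfrac{\lambda_1^2}{\lambda_2^2}|y|^2\big)^{\frac{n-2\gamma}{2}+1}}.
\end{equation*}
Setting $\lambda_1/\lambda_2=0$ in the last factor of the integrand recovers the leading constant $A_0$, and the Kelvin-type inversion $y\mapsto y/|y|^2$ (with Jacobian $|y|^{-2n}$, sending $|y|^2\mapsto|y|^{-2}$) matches the two displayed forms of $A_0$.

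The remaining work, which is the main technical point, is to verify that the two error terms appear in the advertised shape. The quadratic Taylor remainder is controlled by the same rescaling as above and produces the $O\big((|a|/\max\{\lambda_1,\lambda_2\})^2\cdot\min\{\ldots\}^{(n-2\gamma)/2}\big)$ contribution. The subleading piece coming from the expansion of $\big(1+\tfrac{\lambda_1^2}{\lambda_2^2}|y|^2\big)^{-(n-2\gamma)/2-1}$ in the integrand gives a correction to the linear-in-$a$ coefficient, which combined with the $|a|$ prefactor yields the $\min\{\ldots\}^{(n-2\gamma)/2}\cdot\tfrac{|a|}{\max\{\lambda_1,\lambda_2\}}\cdot\min\{\ldots\}^{(n-2\gamma)/2}$ term; this subleading estimate is obtained by splitting the integral into the regions $|y|\leq\lambda_2/\lambda_1$ and $|y|>\lambda_2/\lambda_1$ and expanding each separately, since a naive Taylor expansion of the second factor diverges at infinity. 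The hypothesis $|a|\leq\max\{\lambda_1^2,\lambda_2^2\}\ll 1$ ensures that the Taylor expansion is valid and that the error estimates are small relative to the leading term.
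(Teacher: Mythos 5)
Your approach is genuinely different from the paper's, and in some respects cleaner. The paper manipulates the integrand directly: after the rescaling $x\mapsto\lambda_1 x$ it Taylor expands $\big((\lambda_2/\lambda_1)^2+|x|^2+2a\cdot x/\lambda_1+|a/\lambda_1|^2\big)^{-(n-2\gamma)/2}$ around $|x|^{-(n-2\gamma)}$ and reads off the linear-in-$a$ term from the $x$-odd/even decomposition of the resulting integral. You instead integrate by parts twice: once to shift $\partial_a$ onto $\nabla w_2$, and once more to convert the Hessian matrix $\int w_1^\beta D^2 w_2$ into the scalar $-\tfrac{1}{n}\int\nabla w_1^\beta\cdot\nabla w_2$ times the identity. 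This isolates the linear coefficient \emph{before} any asymptotic expansion is made, which is a nice structural gain, and your Kelvin-inversion identification of the constant $A_0$ is also neater than the paper's (which recognizes the analogous integral as $\partial_\lambda\big|_{\lambda=1}\int w_1^\beta$).

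The one genuine gap is the sentence ``without loss of generality assume $\lambda_1\leq\lambda_2$; the other case simply swaps the roles.'' Because $w_1$ carries the exponent $\beta=\tfrac{n+2\gamma}{n-2\gamma}\neq 1$, the quantity $\int\nabla w_1^\beta\cdot\nabla w_2\,dy$ is \emph{not} manifestly symmetric under $\lambda_1\leftrightarrow\lambda_2$, and indeed the very formula you derive after rescaling by $\lambda_1$ has a prefactor $(\lambda_1/\lambda_2)^{(n-2\gamma)/2}/\lambda_2^2$ whose naive counterpart obtained by swapping is \emph{not} what you get by instead rescaling by $\lambda_2$ (that route produces the prefactor $(\lambda_2/\lambda_1)^{(n+2\gamma)/2}/\lambda_1^2$ together with an integral that diverges in the naive limit $\lambda_2/\lambda_1\to 0$). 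The symmetry is in fact true, but establishing it requires applying the Kelvin-type inversion $x\mapsto\lambda_1\lambda_2\,x/|x|^2$ to the \emph{full} integral $\int\nabla w_1^\beta\cdot\nabla w_2$, not just to the constant $A_0$; this yields the functional identity $J(\mu)=\mu^{-\frac{n-2\gamma}{2}-1}J(1/\mu)$ for $J(\mu)=\int|y|^2(1+|y|^2)^{-\frac{n+2\gamma}{2}-1}(1+\mu|y|^2)^{-\frac{n-2\gamma}{2}-1}dy$, which is exactly what reconciles the two prefactors. You mention the inversion only to match the two displayed forms of $A_0$, not to justify the reduction. In fairness, the paper itself proves only the case $\lambda_2\ll\lambda_1$ and leaves the opposite one ``to the reader,'' so this omission puts you on equal footing; but as written, ``simply swaps the roles'' is an incorrect description of why the reduction works. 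The error-term bookkeeping is also sketched rather than carried out, though again that matches the level of detail the paper supplies.
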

\begin{proof}
We consider the case $\lambda_2<<\lambda_1$.
\begin{equation*}\begin{split}
&\frac{1}{n+2\gamma}\int_{\R^n}\frac{\partial }{\partial a}\Big(\frac{\lambda_1}{\lambda_1^2+|x-a|^2}\Big)^{\frac{n+2\gamma}{2}}\Big(\frac{\lambda_2}{\lambda_2^2+|x|^2}\Big)^{\frac{n-2\gamma}{2}}\,dx\\
&\,=\int_{\R^n}\frac{\lambda_1^{\frac{n+2\gamma}{2}}(x-a)}{(\lambda_1^2+|x-a|^2)^{\frac{n+2\gamma}{2}+1}}
\Big(\frac{\lambda_2}{\lambda_2^2+|x|^2}\Big)^{\frac{n-2\gamma}{2}}dx\\
&\,=\lambda_1^{\frac{n+2\gamma}{2}}\lambda_2^{\frac{n-2\gamma}{2}}\int_{\R^n}\frac{\lambda_1 x}{\lambda_1^{n+2\gamma+2}(1+|x|^2)^{\frac{n+2\gamma}{2}+1}}\frac{1}{(\lambda_2^2+|\lambda_1 x+a|^2)^{\frac{n-2\gamma}{2}}}\lambda_1^n\,dx\\
&\,=\lambda_1^{-\frac{n-2\gamma}{2}-1}\lambda_2^{\frac{n-2\gamma}{2}}\int_{\R^n}\frac{x}{(1+|x|^2)^{\frac{n+2\gamma}{2}+1}}\frac{1}{((\frac{\lambda_2}{\lambda_1})^2+|x|^2+\frac{2a\cdot x}{\lambda_1}+|\frac{a}{\lambda_1}|^2)^{\frac{n-2\gamma}{2}}}\,dx.
\end{split}\end{equation*}
Using the assumption that $|a|\leq C\lambda_1^2$ and $\lambda_2<<\lambda_1$, by Taylor's expansion for the second term in the integral, the above integral is
\begin{equation*}\begin{split}
&-(n-2\gamma)\Big(\frac{\lambda_2}{\lambda_1}\Big)^{\frac{n-2\gamma}{2}}
\lambda_1^{-1}\int_{\R^n}\frac{\lambda_1^{-1}(a\cdot x) x|x|^{-(n-2\gamma)-2}}{(1+|x|^2)^{\frac{n+2\gamma}{2}+1}}
\Big[1+O\Big(\frac{a}{\lambda_1}\Big)^2+O\Big(\frac{\lambda_2}{\lambda_1}\Big)^2\frac{a}{\lambda_1}\Big]\,dx\\
&=-\frac{n-2\gamma}{n}\Big(\frac{\lambda_2}{\lambda_1}\Big)^{\frac{n-2\gamma}{2}}
\frac{a}{\lambda_1^2}\int_{\R^n}\frac{1}{|x|^{n-2\gamma}(1+|x|^2)^{\frac{n+2\gamma}{2}+1}}\,dx
\Big[1+O\Big(\frac{a}{\lambda_1}\Big)^2+O\Big(\frac{\lambda_2}{\lambda_1}\Big)^2\frac{a}{\lambda_1}\Big]\\
&=-\frac{A_0}{n+2\gamma}\Big(\frac{\lambda_2}{\lambda_1}\Big)^{\frac{n-2\gamma}{2}}
\frac{a}{\lambda_1^2}+O\Big(\frac{\lambda_2}{\lambda_1}\Big)^{\frac{n-2\gamma}{2}}
\Big[\Big(\frac{a}{\lambda_1}\Big)^2+\Big(\frac{\lambda_2}{\lambda_1}\Big)^2\frac{a}{\lambda_1}\Big].
\end{split}\end{equation*}
One can deal similarly with the case  $\lambda_1<<\lambda_2$; we leave this proof to the reader.
\end{proof}

\end{document}